\journalname{}
\begin{document}

\title{Second-Order Necessary Conditions, Constraint Qualifications and Exact Penalty for Mathematical Programs with Switching Constraints}

\titlerunning{Second-Order Necessary Conditions, CQs and Exact Penalty for MPSC}

\author{Jiawei Chen \and Luyu Liu  \and Yibing Lv  \and   Kequan Zhao }

\institute{
Jiawei Chen \and Luyu Liu
\at
School of Mathematics and Statistics, Southwest University,  Chongqing 400715, China\\
Emails: J.W.Chen713@163.com (J. Chen),\, lyliu124@163.com (L. Liu)\\
Yibing Lv
\at
School of Information and Mathematics, Yangtze University, Jingzhou 434023, China \\
Emails: Yibinglv@yangtzeu.edu.cn\\
Kequan Zhao
\at
School of Mathematical Sciences, Chongqing Normal University,  Chongqing 401331, China\\
Emails: kequanz@163.com
}

\date{Received: date / Accepted: date}

\maketitle

\begin{abstract}
In this paper, we investigate second-order necessary conditions and exact penalty
 of mathematical programs with switching constraints (MPSC). Some new second-order constraint qualifications and second-order quasi-normality are introduced for (MPSC), which are crucial to establish the second-order necessary conditions and the error bound of (MPSC).
We explore the relations among these constraint qualifications in term of (MPSC). The characterizations of Morduhovich stationary point and strong stationary point of (MPSC) are derived under some mild conditions.
 A sufficient condition is provided for a Morduhovich stationary point of (MPSC) being a strong stationary point.
The strong second-order necessary conditions as well as weak second-order necessary conditions
of (MPSC) are established under these weak constraint qualifications.
  Finally,  we obtain the local exact penalty of (MPSC) under the local error bound or some constraint qualifications in term of (MPSC).
\end{abstract}

\keywords{Mathematical programs with switching constraints \and Second-order necessary conditions \and Second-order constraint qualifications \and Exact penalty  \and Error bound}
\subclass{90C30 \and  90C33 \and 90C46}

\section{Introduction}
In recent years, a class of mathematical programming problems with switching constraints (MPSC) has been gained widespread attentions and applications in various fields, such as economic management, optimal control, and mathematical programs with equilibrium constraints; see e.g., \cite{Clason2017,Gugat2008,Hante2013,Seidman2013,Wang2015,JiangZCL,Kanzow2021} and the references therein. Certain equality constraints in (MPSC) can be viewed as the product of two functions, which are called switching constraints.
However, (MPSC) is more challenging to address than the standard nonlinear problems because,
if the switching constraints are treated as equality constraints simply,
some standard constraint qualifications will fail at the feasible points, such as the linear independence constraint qualification (LICQ), Mangasarian-Fromovitz constraint qualification (MFCQ) and Abadie constraint qualification (ACQ); see e.g., \cite{Mehlitz2020}. Therefore, the existing results for standard nonlinear programming problems can not be applied directly to (MPSC). This implies that a locally optimal solution of MPSC may not necessarily be a Karush-Kuhn-Tucker (KKT) point.

For decades, the first-order optimality conditions for (MPSC) have been investigated by the advanced variational analysis. For instance, Mehlitz \cite{Mehlitz2020} introduced the concepts of weak, Mordukhovich, and strong stationarity conditions for (MPSC), and extended some standard constraint qualifications to the  mathematical programs with switching constraints, thereby deriving the first-order optimality conditions for the Mordukhovich stationarity point of (MPSC). Thereafter, Li and Guo \cite{Li2023} introduced the notion of Bouligand stationarity and derived the weakest constraint qualifications for Bouligand and Mordukhovich stationarities of (MPSC) to hold at locally optimal solutions, respectively. Additionally, they extended the constraint qualifications in (MPSC) and explored the relations among existing MPSC-tailored constraint qualifications in \cite{Li2023}. Very recently, Liang and Ye \cite{Liang2021} introduced the concept of $Q$-stationarity for (MPSC) and derived a new optimality condition and local exact penalty results.
As pointed out in \cite{Mehlitz2020,Li2023} that if (MPSC) is regarded as a standard nonlinear programming, then some classical constraint qualifications may fail to hold. It is well-known that constraint qualifications are fundamental notions in the optimization theory such as
 optimality conditions, duality, error bounds, calmness, and penalization; see \cite{Mehlitz2020,Li2023,Ribeiro2023,Chendai,WeiTammerYao,XiaoVanYaoWen} and the references therein.
 So it is necessary to introduce some verifiable new constraint qualifications for (MPSC) including linear independence constraint qualification,
relaxed constant rank constraint qualification and Abadie constraint qualification in the sense of (MPSC).

It is well-known that second-order necessary optimality condition is an important issue in the optimization theory, which is also of vital importance for high order efficient algorithms. Ribeiro and Sachine \cite{Ribeiro2023} studied the strong second-order necessary condition for standard nonlinear problems under the relaxed constant rank constraint qualification, which is considered a relatively weak constraint qualification. The weak constant rank (WCR) condition for standard nonlinear problems was introduced by Andreani et al. \cite{Andreani2007}, which is a very weak condition. Thereafter, Andreani et al. \cite{Andreani2010} derived the weak second-order necessary condition for standard nonlinear problems via the WCR condition. Guo et al. \cite{Guo2013} investigated the second-order optimality conditions for mathematical programming problems with equilibrium constraints (MPEC), introduced the concepts of various second-order optimality conditions such as $S$-multiplier strong/weak second-order necessary conditions, and derived these second-order optimality conditions under different constraint qualifications. For the mathematical programming problems with disjunctive constraints (MPDC), Mehlitz \cite{Mehlitz2020mpdc} introduced the MPDC-tailored version of the linear independence constraint qualification and obtained strong second-order necessary conditions of MPDC under this constraint qualification applying the second-order variational analysis. Observe that the common property of (MPSC) and (MPEC) is orthogonal constraint, and (MPSC) is more general than (MPEC). Compared with the general nonlinear programming problems, the main difficulty of MPEC results from the orthogonal constraints. To the best of our knowledge, there are very little results on the second-order necessary optimality conditions for (MPSC).

Motivated and inspired by the above works, we in this paper investigate
the second-order necessary conditions and exact penalty of (MPSC).
Some constraint qualifications, such as the  MPSC weak constant rank condition, MPSC  piecewise weak constant rank condition, weak/strong second-order constraint qualifications and MPSC piecewise second-order quasi-normality  are introduced in terms of (MPSC). We discuss the relationships among these second-order constraint qualifications and the existing constraint qualifications.  We present a sufficient condition for  a Morduhovich stationary point of (MPSC) being a strong stationary point. Then the strong second-order necessary conditions as well as weak second-order necessary conditions
of (MPSC) are established under these weak constraint qualifications. Last but not least,
 the local exact penalty of (MPSC) is derived under the local error bound or some mild constraint qualifications in term of (MPSC).
 Sufficient conditions for the local error bound of (MPSC) are also considered.

The highlights of this paper are summarized as follows:
\begin{itemize}
  \item A new weak/strong second-order constraint qualification and new weak constant constant rank condition in term of (MPSC) are introduced, which are crucial to establish the weak and strong second-order necessary conditions of (MPSC). Additionally, we explore the relations among the new constraint qualifications and existing ones, indicating that our results promote those in \cite{Guo2013}.

  \item We obtain a sufficient condition for  a Morduhovich stationary point being a strong stationary point.

  \item  The weak/strong second-order necessary conditions
of (MPSC) are established under the weak constraint qualifications.

 \item  The local exact penalty results of (MPSC) are obtained under the local error bound or some mild constraint qualifications in term of (MPSC), which are different from the error bound presented in \cite{Liang2021}.
 We also derive sufficient conditions for the local error bound of (MPSC), which extend the error bound condition for standard nonlinear programming problems in \cite{Bai2023}.
\end{itemize}

The remainder of this paper is organized as follows. In Section \ref{sec2}, we recall some basic definitions and the formulas for various tangent,
normal and linearization cones of cross set, and introduce MPSC weak constant rank condition and MPSC piecewise weak constant rank condition. In Section \ref{sec4:0}, we propose some new second-order constraint qualifications in terms of \eqref{mpsc1} and discuss the relationships among these second-order constraint qualifications and first-order constraint qualifications. In Section \ref{sec3}, we recall the notions of  Mordukhovich and strong stationarity to (MPSC) and give the sufficient conditions for the Mordukhovich stationary point of \eqref{mpsc1} being an strong stationary point. In Section \ref{sec4}, we establish the second-order necessary conditions for (MPSC) under an MPSC-tailored version of constraint qualifications. In Section \ref{sec5}, we obtain the local exact penalty of \eqref{mpsc1} under  the local error bound assumption. We also present the sufficient conditions for the  local error bound.
 Finally, we give the conclusions.

\section{Preliminaries}\label{sec2}

Throughout the paper, unless otherwise specified, let $\mathbb{R}^n$ be $n$-dimensional Euclidean space with the Euclidean norm $\lVert \cdot \rVert $ and inner product $x^\top y$ for two vectors $x,y\in\mathbb{R}^n$, where the superscript $\top$ denotes the transpose. Let
   $\mathbb{B}_\epsilon (\bar{x}):=\{x\in \mathbb{R}^n: \lVert x-\bar{x} \rVert<\epsilon\}$ be the open ball centered at $\bar{x}\in \mathbb{R}^n$  with radius $\epsilon>0$. The notation $x^{k}\xrightarrow{\Omega} \bar{x}$ means that $x^{k}\in \Omega$ for each $k$ and $x^{k}\rightarrow \bar{x}$ as $k\rightarrow\infty$.
   For a nonempty subset $A$ of $\mathbb{R}^n$, the (negative) polar cone of $A$ is defined as
$A^{\circ}:=\{d\in \mathbb{R}^n\,:\, d^\top x\leq 0,\,\forall\, x\in A\}$.
 For a differentiable mapping $\phi: \mathbb{R}^n\rightarrow\mathbb{R}^m$,
 the Jacobian of $\phi$ at $x\in \mathbb{R}^n$ is denoted by $\nabla\phi(x)\in\mathbb{R}^{m\times n}$.
 In particular, $\nabla\phi(x)$ is the gradient of $\phi$ at $x$,
 and $\nabla^2\phi(x)$ is its Hessian matrix at $x$ when $m=1$. For a differentiable function $\varphi: [a,b]\rightarrow\mathbb{R}$,  $\varphi'_+(a)$ signifies the right (Fr\'echet) derivative of $\varphi$ at $a$, where $[a,b]\subseteq\mathbb{R}$.
  We denote by $\textrm{dist}_\mathcal{F}(\bar{x}):=\inf_{x\in \mathcal{F}}\| x-\bar{x} \|$  the distance from a point $\bar{x}\in \mathbb{R}^n$ to a subset $\mathcal{F}$ of $\mathbb{R}^n$.

In this paper, we investigate the following mathematical programs with switching constraints:
\begin{equation}\label{mpsc1}\tag{MPSC}
\begin{aligned}
\left\{\begin{array}{lll}
&\min \quad f\left( x \right) \\
&\mbox{s.t.} \,
	g_i\left( x \right)\leq 0,&i=1,2,\cdots,m,\\
&\quad\,	h_j\left( x \right)=0,&j=1,2,\cdots,p,\\
 &\quad\,   G_k(x)H_k(x)=0,&k=1,2,\cdots,l,
\end{array}\right.
\end{aligned}
\end{equation}
where the functions $f, g_i, h_j, G_k, H_k:\mathbb{R}^n\rightarrow \mathbb{R}$ are twice continuously differentiable for all $i=1,2,\cdots,m, j=1,2,\cdots,p, k=1,2,\cdots,l$.

For the simplicity, let $g:=(g_1,\cdots,g_m)^\top$, $h:=(h_1,\cdots,h_p)^\top$, $G:=(G_1,\cdots,G_l)^\top$ and $H:=(H_1,\cdots,H_l)^\top$.  The Lagrange function associated with \eqref{mpsc1} $\ell$: $\mathbb{R}^n\times\mathbb{R}^m\times\mathbb{R}^p\times\mathbb{R}^l\times\mathbb{R}^l\rightarrow \mathbb{R}$ is defined by
\begin{equation}
  \ell(x,\lambda,\rho,\mu,\nu):=f(x)+\lambda^\top g(x)+\rho^\top h(x)+\mu^\top G(x)+\nu^\top H(x).
\end{equation}
The feasible set of \eqref{mpsc1} is denoted by $\mathcal{F}$.
We also introduce some index sets depended on a feasible point $\bar{x}\in \mathcal{F}$ as follows:
\begin{align*}
 \mathcal{I}_h &:=\{1,2,\cdots,p\},\\
  \mathcal{I}_g &:=\{i\in\{1,2,\cdots,m\}\,:\, g_i(\bar{x})=0\}, \\
  \mathcal{I}_G &:=\left\{i\in\{1,2,\cdots,l\}\,:\, G_i(\bar{x})=0,\,\, H_i(\bar{x})\neq0\right\},\\
  \mathcal{I}_H  &:=\{i\in\{1,2,\cdots,l\}\,:\, G_i(\bar{x})\neq0,\,\, H_i(\bar{x})=0\},\\
  \mathcal{I}_{GH}  &:=\{i\in\{1,2,\cdots,l\}\,:\,G_i(\bar{x})=H_i(\bar{x})=0\}.
\end{align*}
Obviously, $\{\mathcal{I}_G,\mathcal{I}_H,\mathcal{I}_{GH}\}$ is a disjoint partition of $\{1,2,\cdots,l\}$.\vskip2mm

An important approach for dealing with \eqref{mpsc1} is to consider its branching problem. Let $\mathcal{P}(\mathcal{I}_{GH})$ be the set of all disjoint bipartitions of $\mathcal{I}_{GH}$. For any given $(\beta_1,\beta_2)\in\mathcal{P}(\mathcal{I}_{GH})$, a branch problem of \eqref{mpsc1} is defined as follows:
\begin{equation}\label{mpsc3}
  \begin{aligned}
\left\{\begin{array}{lll}
 &  \min \, f\left( x \right) \\
&\mbox{s.t.} \,
	g_i\left( x \right)\leq 0,&i=1,2,\cdots,m,\\
&\quad\,	h_j\left( x \right)=0,&j=1,2,\cdots,p,\\
 &\quad\,   G_k(x)=0,&k\in \mathcal{I}_G\cup\beta_1,\\
&\quad \,   H_k(x)=0,&k\in \mathcal{I}_H\cup\beta_2.
\end{array} \right.
\end{aligned}
\end{equation}
We denote by $\mathcal{F}_{(\beta_1,\beta_2)}$ the feasible set of problem \eqref{mpsc3}.
Clearly, for each $(\beta_1,\beta_2)\in\mathcal{P}(\mathcal{I}_{GH})$,
 the problem \eqref{mpsc3} is a standard nonlinear programming. Besides, $\mathcal{F}= \bigcup_{(\beta_1,\beta_2)\in\mathcal{P}(\mathcal{I}_{GH})}\mathcal{F}_{(\beta_1,\beta_2)}$.

Along with the \eqref{mpsc1}, we present an abstract optimization problem of the form
\begin{align}\label{mpsc2}
  \left\{\begin{array}{lll}
    &\min \,  f(x) \\
    &\mbox{s.t.} \, F(x)\in \mathcal{D},
  \end{array}\right.
\end{align}
where $F$: $\mathbb{R}^n\rightarrow\mathbb{R}^q$ is twice continuously differentiable, and $\mathcal{D}\subseteq \mathbb{R}^q$ is nonempty. If we set $F:=(g,h,\psi)^\top$ and $\mathcal{D}:=(-\infty,0]^m\times \{0\}^p\times \mathcal{S}^l$, where
\begin{equation*}
 \psi:=(G_1,H_1,G_2,H_2,\cdots,G_l,H_l)^\top \textrm{ and } \mathcal{S}:=\{(a,b)\in\mathbb{R}^2\,:\, ab=0\},
\end{equation*}
then the problem \eqref{mpsc2} is reduced to \eqref{mpsc1}. The set $\mathcal{S}:=\{(a,b)\in\mathbb{R}^2\,:\, ab=0\}$ is the so-called cross set, which is nonempty, closed and nonconvex  cone. So, the problem \eqref{mpsc2} is a nonconvex optimization problem.

We next recall some basic concepts and results in variational analysis.

\begin{definition}\cite{Rockafellar1998}
Let $\Omega$ be a nonempty and closed subset of $\mathbb{R}^n$ and $\bar{x}\in \Omega$.
\begin{enumerate}
  \item[{\rm (i)}]
   The tangent cone of $\Omega$ at $\bar{x}$ is defined by
  \begin{equation*}
    \mathcal{T}_\Omega(\bar{x}):=\left\{d\in \mathbb{R}^n:\exists t_k\geq0,x^{k}\xrightarrow{\Omega} \bar{x} \textrm{ s.t. }t_k(x^{k}-\bar{x})\rightarrow d\right\}.
  \end{equation*}
  \item[{\rm (ii)}]
   The Fr\'echet/regular normal cone of $\Omega$ at $\bar{x}$ is defined by $\widehat{\mathcal{N}}_\Omega(\bar{x}):=\mathcal{T}_\Omega(\bar{x})^{\circ}$.
  \item[{\rm (iii)}]
   The Mordukhovich/limiting normal cone of $\Omega$ at $\bar{x}$ is defined by
  \begin{equation*}
    \mathcal{N}_\Omega(\bar{x}):=\left\{d\in \mathbb{R}^n:\exists x^{k}\xrightarrow{\Omega} \bar{x} \textrm{ and } d^{k}\rightarrow d \textrm{ with } d^{k}\in \widehat{\mathcal{N}}_\Omega(x^{k}) \textrm{ for each }k\right\}.
  \end{equation*}
\end{enumerate}
\end{definition}

\begin{remark}{\rm \cite{Rockafellar1998,Mordukhovich2006}} Note that
   $\mathcal{T}_\Omega(\bar{x})$, $\widehat{\mathcal{N}}_\Omega(\bar{x})$ and $\mathcal{N}_\Omega(\bar{x})$ are all closed cones. The Fr\'echet normal cone $\widehat{\mathcal{N}}_\Omega(\bar{x})$ is convex.
   Generally, $\widehat{\mathcal{N}}_\Omega(\bar{x})\subseteq \mathcal{N}_\Omega(\bar{x})$ and equality holds if $\Omega$ is convex, and at this case $\widehat{\mathcal{N}}_\Omega(\bar{x})$ and $\mathcal{N}_\Omega(\bar{x})$ reduce to the convex normal cone defined by
   \begin{equation*}
     \mathcal{N}_\Omega(\bar{x}):=\left\{d\in \mathbb{R}^n \,:\, d^\top(x-\bar{x})\leq 0,\, \forall \, x\in \Omega\right\}.
   \end{equation*}
   Besides,
   we have
  \begin{equation*}
     \limsup_{x^{k}\xrightarrow{\Omega} \bar{x}}\widehat{\mathcal{N}}_\Omega(x^{k})=\mathcal{N}_\Omega(\bar{x}),
  \end{equation*}
where the notion\, \lq\lq $\limsup$\rq\rq\, is Painlev\'e-Kuratowski outer limit (see e.g., \cite[Section4.B]{Rockafellar1998}).
\end{remark}

 In what follows, we proceed to recall some constraint qualifications given in \cite{Mehlitz2020,Li2023,Liang2021}.

\begin{definition}\cite{Mehlitz2020,Li2023,Liang2021}
Let $\bar{x}\in \mathcal{F}$ be a feasible point of \eqref{mpsc1}. We say that
\begin{enumerate}
  \item[{\rm (i)}]
    MPSC linear independence constraint qualification (MPSC-LICQ) holds at $\bar{x}$ iff, the family of gradients
  \begin{equation}\label{licq}
  \left\{\nabla g_i(\bar{x})\right\}_{i\in \mathcal{I}_g}\bigcup \left\{\nabla h_j(\bar{x})\right\}_{j\in \mathcal{I}_h}\bigcup \left\{\nabla G_k(\bar{x})\right\}_{k\in \mathcal{I}_G\cup\mathcal{I}_{GH}}\bigcup\left\{\nabla H_k(\bar{x})\right\}_{k\in \mathcal{I}_H\cup\mathcal{I}_{GH}}
\end{equation}
is linearly independent.
  \item[{\rm (ii)}]
    MPSC relaxed constant rank constraint qualification (MPSC-RCRCQ) holds at $\bar{x}$ iff, there exists $\varepsilon>0$ such that, for any $\mathcal{I}_1\subseteq\mathcal{I}_g$ and $\mathcal{I}_3,\mathcal{I}_4\subseteq\mathcal{I}_{GH}$, the family of gradients
     \begin{equation}\label{rcrcq}
  \left\{\nabla g_i(x)\right\}_{i\in \mathcal{I}_1}\bigcup \left\{\nabla h_j(x)\right\}_{j\in \mathcal{I}_h}\bigcup \left\{\nabla G_k(x)\right\}_{k\in \mathcal{I}_3\cup\mathcal{I}_{G}}\bigcup\left\{\nabla H_k(x)\right\}_{k\in \mathcal{I}_4\cup\mathcal{I}_{H}}
 \end{equation}
  has the same rank for each $x\in \mathbb{B}_\varepsilon(\bar{x})$.

\item [{\rm(iii)}]  MPSC piecewise constant rank of subspace component (MPSC-PCRSC) holds at $\bar{x}$ iff,  for each $(\beta_1,\beta_2)\in\mathcal{P}(\mathcal{I}_{GH})$, the  constant rank of subspace component (CRCS) holds at $\bar{x}$ for nonlinear problem \eqref{mpsc3}.  That is,  for each $(\beta_1,\beta_2)\in\mathcal{P}(\mathcal{I}_{GH})$, there exists $\varepsilon>0$ such that the rank of the family of gradients
\begin{equation}\label{pcrsc}
  \left\{\nabla g_i(x)\right\}_{i\in \mathcal{I}^{-}_g}\bigcup \left\{\nabla h_j(x)\right\}_{j\in \mathcal{I}_h}\bigcup \left\{\nabla G_k(x)\right\}_{k\in \mathcal{I}_G\cup\beta_1}\bigcup\left\{\nabla H_k(x)\right\}_{k\in \mathcal{I}_H\cup\beta_2}
\end{equation}
is constant for all $x\in \mathbb{B}_\varepsilon(\bar{x})$, where
 \begin{align*}
    \begin{array}{ll}
 \mathcal{I}^{-}_g:=   \Bigg\{  l\in \mathcal{I}_g\,:\, - \nabla g_l(\bar{x}) \in & \Bigg\{ \displaystyle \sum_{i\in \mathcal{I}_g \setminus \{l\}}{\lambda_i\nabla g_i(\bar{x})}+\sum_{j=1}^p{\rho_j\nabla h_j(\bar{x})}+\sum_{k\in\mathcal{I}_G\cup\beta_1}{\mu_k\nabla G_k(\bar{x})}\\
 & \,\,\,\, \displaystyle +\sum_{k\in\mathcal{I}_H\cup\beta_2}{\nu_k\nabla H_k(\bar{x})}\,:\,  \lambda_i\geq 0,\,i\in \mathcal{I}_g     \Bigg\}    \Bigg\}.
\end{array}
\end{align*}

  \item[{\rm (iv)}]
   MPSC Abadie constraint qualification (MPSC-ACQ) holds at $\bar{x}$ iff, $\mathcal{L}^{MPSC}_\mathcal{F}(\bar{x})=\mathcal{T}_\mathcal{F}(\bar{x})$.

    \item[{\rm (v)}]
   MPSC Guignard constraint qualification (MPSC-GCQ) holds at $\bar{x}$ iff, $\mathcal{L}^{MPSC}_\mathcal{F}(\bar{x})^{\circ}= \widehat{\mathcal{N}}_\mathcal{F}(\bar{x})$.
\end{enumerate}
\end{definition}

We now introduce a weak constant rank condition of \eqref{mpsc1}, which is slight modification of \cite[Definition 3.4 (iv)]{Guo2013}, which is useful to characterize the second-order necessary optimality conditions and error bound of \eqref{mpsc1}.

\begin{definition}
 Let $\bar{x}\in \mathcal{F}$ be a feasible point of \eqref{mpsc1}. We say that
\begin{enumerate}
  \item[{\rm (i)}]
 MPSC weak constant rank condition (MPSC-WCR) holds at $\bar{x}$ iff, there exists $\varepsilon>0$ such that the family of gradients
\begin{equation}\label{wcr}
  \left\{\nabla g_i(x)\right\}_{i\in \mathcal{I}_g}\bigcup \left\{\nabla h_j(x)\right\}_{j\in \mathcal{I}_h}\bigcup \left\{\nabla G_k(x)\right\}_{k\in \mathcal{I}_G\cup\mathcal{I}_{GH}}\bigcup\left\{\nabla H_k(x)\right\}_{k\in \mathcal{I}_H\cup\mathcal{I}_{GH}}
\end{equation}
has the same rank for all $x\in \mathbb{B}_\varepsilon(\bar{x})$.

 \item[{\rm (ii)}]
    MPSC piecewise WCR (MPSC-PWCR) holds at $\bar{x}$ iff, for each $(\beta_1,\beta_2)\in\mathcal{P}(\mathcal{I}_{GH})$, the WCR holds at $\bar{x}$ for nonlinear problem \eqref{mpsc3}. That is,  for each $(\beta_1,\beta_2)\in\mathcal{P}(\mathcal{I}_{GH})$, there exists $\varepsilon>0$ such that the rank of the family of gradients
\begin{equation}\label{pwcr}
  \left\{\nabla g_i(x)\right\}_{i\in \mathcal{I}_g}\bigcup \left\{\nabla h_j(x)\right\}_{j\in \mathcal{I}_h}\bigcup \left\{\nabla G_k(x)\right\}_{k\in \mathcal{I}_G\cup\beta_1}\bigcup\left\{\nabla H_k(x)\right\}_{k\in \mathcal{I}_H\cup\beta_2}
\end{equation}
is constant for all $x\in \mathbb{B}_\varepsilon(\bar{x})$.

 \end{enumerate}
\end{definition}

\begin{remark}\label{re4.1}
\begin{enumerate}
  \item [{\rm(i)}]
  MPSC-LICQ is a very strong constraint qualification, which implies MPSC-MFCQ, MPSC-NNAMCQ, MPSC-ACQ and MPSC-GCQ \cite{Mehlitz2020}. It is worth noting that the following implication relations holds:
$$\textrm{MPSC-LICQ}\Longrightarrow \textrm{MPSC-RCRCQ}\Longrightarrow\textrm{MPSC-ACQ}\Longrightarrow\textrm{MPSC-GCQ},$$
$$ \textrm{MPSC-RCRCQ}\Longrightarrow\textrm{MPSC-PCRSC},$$
and  $ \textrm{MPSC-RCRCQ}\Longrightarrow\textrm{MPSC-WCR}$.  Together with \cite[Lemma 5.4 and Fig. 1]{Mehlitz2020} yields that
 $$\textrm{MPSC-LICQ}\Longrightarrow\textrm{MPSC-MFCQ}\Longrightarrow\textrm{MPSC-NNAMCQ}\Longrightarrow\textrm{MPSC-ACQ}.$$
  Clearly, if $\mathcal{I}_{GH}=\emptyset$, then MPSC-tailored constraint qualifications above reduce to the classical LICQ, RCRCQ, ACQ and WCR, respectively.

\item [{\rm(ii)}] From \cite[p. 740]{GuoZL}, we have  $\mathcal{I}^{-}_g=  \mathcal{I}_{0}:=\left\{ i\in \mathcal{I}_g \,:\, \nabla g_{i}(\bar{x})^{\top} d=0,\,\forall\,d\in   \mathcal{L}_{\mathcal{F}_{(\beta_1,\beta_2)}}(\bar{x}) \right\}$. Then the MPSC-PCRSC holds at $\bar{x}$ if and only if
    for each $(\beta_1,\beta_2)\in\mathcal{P}(\mathcal{I}_{GH})$, there exists $\varepsilon>0$ such that the rank of the family of gradients
\begin{equation*}
  \left\{\nabla g_i(x)\right\}_{i\in \mathcal{I}_{0}}\bigcup \left\{\nabla h_j(x)\right\}_{j\in \mathcal{I}_h}\bigcup \left\{\nabla G_k(x)\right\}_{k\in \mathcal{I}_G\cup\beta_1}\bigcup\left\{\nabla H_k(x)\right\}_{k\in \mathcal{I}_H\cup\beta_2}
\end{equation*}
is constant for all $x\in \mathbb{B}_\varepsilon(\bar{x})$. In view of this, the MPSC-PCRSC
      can also be called MPSC piecewise relaxed Mangasarian-Fromovitz constraint qualification.

\item [{\rm(iii)}]
 The MPSC-PWCR and the MPSC-WCR are same when $\mathcal{I}_{GH}=\emptyset$.
 Otherwise,
        the MPSC-PWCR and the MPSC-WCR may be not equivalent even if $\mathcal{I}_{GH}$ is singleton; see Examples \ref{Piecewise:CRSC-WCR:2} and \ref{MPSC-PWCR:MPSC-WCR}. If $\mathcal{I}^{-}_g= \mathcal{I}_g$, then the  MPSC-PWCR and the  MPSC-PCRSC are coincided. Moreover, the  MPSC-PWCR and the  MPSC-PCRSC are not equivalent when $\mathcal{I}^{-}_g \neq \mathcal{I}_g$; see Examples \ref{Piecewise:CRSC-WCR:2} and \ref{Piecewise:CRSC-WCR}.
\end{enumerate}
\end{remark}

We next give examples show that the MPSC-PWCR does not imply the MPSC-PCRSC and MPSC-WCR, and that the MPSC-WCR and the MPSC-PWCR are not implied each other.

\begin{example}\label{Piecewise:CRSC-WCR:2}
Consider the problem \eqref{mpsc1} with $f(x):= -x_1, g_{1}(x):=-x_1, g_{2}(x):=x_2$, $g_{3}(x):=x_2-x_{3}^{2}$, $ G_{1}(x):= x_1$  and $ H_{1}(x):= x_3$, where $x=(x_1,x_2,x_3)^\top\in \mathbb{R}^{3}$.
Taking $\bar{x}=(0,0,0)^\top$. Then the index sets $\mathcal{I}_{g}=\{1,2,3\},\mathcal{I}_{GH}=\{1\}$, $\mathcal{P}(\mathcal{I}_{GH})=\{(\emptyset,\{1\}), (\{1\},\emptyset)  \} $ and the gradients
\begin{equation*}
\begin{split}
  &\nabla g_1(x)=(-1,0,0)^\top,\,\nabla g_2(x)=(0,1,0)^\top,\,\nabla g_3(x)=(0,1,-2x_3)^\top,\\
  &\nabla G_1(x)=(1,0,0)^\top,\, \nabla H_1(x)=(0,0,1)^\top.
\end{split}
\end{equation*}
Clearly, the MPSC-WCR holds at $\bar{x}$. For the disjoint bipartition $(\{1\},\emptyset)\in \mathcal{P}(\mathcal{I}_{GH})$, the corresponding problem \eqref{mpsc3} is given as follows:
\begin{equation}\label{bp4}
\begin{split}
  \textrm{min } &  -x_1 \\
  \textrm{s.t. } & -x_1\leq0,\\
                 &  x_2\leq0,\\
                 & x_2-x_3^2\leq0,\\
                 & x_1=0.
  \end{split}
\end{equation}
By simple calculation, we obtain that the linearization cone
$\mathcal{L}_{\mathcal{F}_{(\{1\},\emptyset)}}(\bar{x})=  \{0\} \times (-\mathbb{R}_{+})\times \mathbb{R}$,
 the index set $\mathcal{I}^-_{g}=\{i\in \mathcal{I}_{g}\,:\,\nabla g_i(\bar{x})^\top d=0,\,\forall\, d\in \mathcal{L}_{\mathcal{F}_{(\{1\},\emptyset)}}(\bar{x})\}=\{1\}$. Clearly, the rank of gradients $\{\nabla g_1(x),\nabla G_1(x)\}$ is equal to constant  $1$ around $\bar{x}$. However, the rank of gradients $\{\nabla g_1(x),\nabla g_2(x),\nabla g_3(x),\nabla G_1(x)\}$ is not constant around $\bar{x}$. Consequently, the MPSC-PWCR does not hold at $\bar{x}$, and for the disjoint bipartition $(\{1\},\emptyset)\in \mathcal{P}(\mathcal{I}_{GH})$, the CRSC holds at $\bar{x}$.

Similarly,  for the corresponding problem \eqref{mpsc3} with disjoint bipartition $(\emptyset,\{1\})\in \mathcal{P}(\mathcal{I}_{GH})$,  the linearization cone
$\mathcal{L}_{\mathcal{F}_{(\emptyset,\{1\})}}(\bar{x})=  \mathbb{R}_{+}\times (-\mathbb{R}_{+})\times  \{0\}$ and the index set $\mathcal{I}^-_{g}=\emptyset$.
Clearly, for the disjoint bipartition $(\emptyset,\{1\})\in \mathcal{P}(\mathcal{I}_{GH})$, the CRSC holds at $\bar{x}$, because the rank of gradient $\{\nabla H_1(x)\}$ is equal to constant $1$ for all $x$. Altogether, the MPSC-PCRSC holds at $\bar{x}$.
\end{example}

\begin{example}\label{MPSC-PWCR:MPSC-WCR}
Consider the problem \eqref{mpsc1} with $f(x):= x_1+x_2, g_{1}(x):=x_1$, $H_{1}(x):= x_3$  and $ G_{1}(x):= x_2^2+x_3 $, where $x=(x_1,x_2,x_3)^\top\in \mathbb{R}^{3}$.
Take $\bar{x}=(0,0,0)^\top$. It is easy to see that the gradients $\left\{(1,0,0)^\top,\,(0,2x_2,1)^\top \right\}$  has rank two for all $x$ and $\left\{(1,0,0)^\top,\, (0,0,1)^\top \right\}$ has rank two as well. However, the gradients $\{(1,0,0)^\top,(0,2x_2,1)^\top,(0,0,1)^\top\}$ is not constant around $\bar{x}$. Therefore the MPSC-PWCR holds at $\bar{x}$, but the MPSC-WCR is violated at  $\bar{x}$.
\end{example}

The following example shows that the MPSC-PCRSC is also not implied by the MPSC-PWCR and the MPSC-WCR.

\begin{example}\label{Piecewise:CRSC-WCR}
Consider the problem \eqref{mpsc1} with $f(x):= -x_1, g_{1}(x):=-x_1^{2}, g_{2}(x):=x_2^{2}$, $ G_{1}(x):= x_2-x_1^2$  and $ H_{1}(x):= x_2-x_1^2$, where $x=(x_1,x_2)^\top\in \mathbb{R}^{2}$. Clearly, $\mathcal{F}=\{ (0,0)^\top\}$.
Let $\bar{x}=(0,0)^\top$. Then $\mathcal{I}_{G}=\mathcal{I}_{H}=\emptyset,\mathcal{I}_{GH}=\{1\}$, $\mathcal{P}(\mathcal{I}_{GH})=\{(\emptyset,\{1\}), (\{1\},\emptyset)  \} $ and the gradients
\begin{equation*}
  \nabla g_1(x)=(-1,0)^\top,\nabla g_2(x)=(0,1)^\top,\nabla G_1(x)=(1,0)^\top,\nabla H_1(x)=(-2x_1,1)^\top.
\end{equation*}
For the disjoint bipartition $(\emptyset,\{1\})\in \mathcal{P}(\mathcal{I}_{GH})$, the corresponding problem \eqref{mpsc3} is as follows:
\begin{equation}\label{bp2}
\begin{split}
  \textrm{min } &  -x_1 \\
  \textrm{s.t. } & -x_1\leq0,\\
                 &  x_2\leq0,\\
                 & x_2-x_1^2=0.
  \end{split}
\end{equation}
After calculation, the linearization cone of problem \eqref{bp2} at $\bar{x}$ is
$\mathcal{L}_{\mathcal{F}_{(\emptyset,\{1\})}}(\bar{x})= \mathbb{R}_{+}\times \{0\}$, the index set $\mathcal{I}^{-}_{g}=\{i\in\{1,2\}:\nabla g_i(\bar{x})^\top d=0,\forall d\in \mathcal{L}_{\mathcal{F}_{(\emptyset,\{1\})}}(\bar{x})\}=\{2\}$. It is easy to see that the rank of $\{(0,1)^\top,(-2x_1,1)^\top\}$ is equal to  $1$ when $x_1=0$; otherwise, its  rank is equal to $2$. So, the MPSC-PCRSC fails at $\bar{x}$. However, the MPSC-PWCR holds at $\bar{x}$. As a matter of fact, for the disjoint bipartition $(\emptyset,\{1\})\in \mathcal{P}(\mathcal{I}_{GH})$, the rank of $\left\{(-1,0)^\top,(0,1)^\top,(-2x_1,1)^\top \right\}$ is equal to $2$ for all $x$; and for the disjoint bipartition $(\{1\},\emptyset)\in \mathcal{P}(\mathcal{I}_{GH})$, the rank of $\{(-1,0)^\top,(0,1)^\top,(1,0)^\top\}$ is also equal to $2$ for all $x$.
 Therefore, the MPSC-PCRSC is not implied by the MPSC-PWCR. Besides, the rank of
 $\left\{  \nabla g_1(x)=(-1,0)^\top,\nabla g_2(x)=(0,1)^\top,\nabla G_1(x)=(1,0)^\top,\nabla H_1(x)=(-2x_1,1)^\top
\right\}$ is equal to $2$. So, the MPSC-WCR holds at $\bar{x}$.
\end{example}

The next results present the explicit formulas of tangent cone and normal cone of the cross set  $\mathcal{S}$.

\begin{lemma}{\rm \cite{Mehlitz2020,Li2023,Chendai}}\label{pro2.1}
For any $(a,b)\in \mathcal{S}$, the following formulas are valid
\begin{align*}
  \mathcal{T}_\mathcal{S}(a,b) &=\left\{\begin{array}{ll}
                         \{0\}\times\mathbb{R}, & \textrm{if }a=0\textrm{ and }b\neq0, \\
                         \mathbb{R}\times\{0\}, & \textrm{if }a\neq0\textrm{ and }b=0, \\
                         \mathcal{S}, & \textrm{if }a=b=0
                       \end{array}\right\};\\
   \widehat{\mathcal{N}}_\mathcal{S}(a,b)&=\left\{\begin{array}{ll}
                         \mathbb{R}\times\{0\}, & \textrm{if }a=0\textrm{ and }b\neq0, \\
                         \{0\}\times\mathbb{R}, & \textrm{if }a\neq0\textrm{ and }b=0, \\
                         \{(0,0)\}, & \textrm{if }a=b=0
                       \end{array}\right\}; \\
  \mathcal{N}_\mathcal{S}(a,b)&= \left\{\begin{array}{ll}
                         \mathbb{R}\times\{0\}, & \textrm{if }a=0\textrm{ and }b\neq0, \\
                         \{0\}\times\mathbb{R}, & \textrm{if }a\neq0\textrm{ and }b=0, \\
                         \mathcal{S}, & \textrm{if }a=b=0
                       \end{array}\right\}.
\end{align*}
\end{lemma}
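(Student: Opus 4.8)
The plan is to compute each of the three cones directly from the definitions given in the excerpt, treating the three cases $a=0,b\neq0$; $a\neq0,b=0$; and $a=b=0$ separately. The set $\mathcal{S}=\{(a,b):ab=0\}$ is the union of the two coordinate axes, so near any point the local geometry is either a single axis (when exactly one coordinate vanishes) or the full cross (when both vanish), and this dichotomy drives the whole argument.

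First I would handle the tangent cone $\mathcal{T}_\mathcal{S}(a,b)$. If $a=0$ and $b\neq0$, then for $(a',b')\in\mathcal{S}$ close to $(0,b)$ we must have $b'\neq0$, hence $a'=0$; so every feasible sequence lies in $\{0\}\times\mathbb{R}$ and any limit of difference quotients lies in $\{0\}\times\mathbb{R}$, while conversely any $d=(0,t)$ is realized by $(0,b+s)\to(0,b)$. The case $a\neq0,b=0$ is symmetric. For $a=b=0$, the sequences $(1/k,0)$ and $(0,1/k)$ show that $\mathbb{R}\times\{0\}$ and $\{0\}\times\mathbb{R}$ are both contained in $\mathcal{T}_\mathcal{S}(0,0)$; conversely since $\mathcal{S}$ itself is a cone, $x^k\in\mathcal{S}$ and $t_k\ge0$ give $t_k x^k\in\mathcal{S}$, and $\mathcal{S}$ being closed forces the limit $d$ into $\mathcal{S}$, so $\mathcal{T}_\mathcal{S}(0,0)=\mathcal{S}$. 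Then $\widehat{\mathcal{N}}_\mathcal{S}(a,b)=\mathcal{T}_\mathcal{S}(a,b)^\circ$ is obtained by the elementary polar-cone computations: $(\{0\}\times\mathbb{R})^\circ=\mathbb{R}\times\{0\}$, $(\mathbb{R}\times\{0\})^\circ=\{0\}\times\mathbb{R}$, and $\mathcal{S}^\circ=\{(u,v):ua+vb\le0\ \forall(a,b)\in\mathcal{S}\}=\{(0,0)\}$ (testing against $(\pm1,0)$ and $(0,\pm1)$ forces $u=v=0$).

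For the limiting normal cone $\mathcal{N}_\mathcal{S}(a,b)$, I would use $\mathcal{N}_\mathcal{S}(\bar z)=\limsup_{z\to\bar z}\widehat{\mathcal{N}}_\mathcal{S}(z)$ from the Remark. When $a=0,b\neq0$: every nearby $z\in\mathcal{S}$ also has its first coordinate zero and second nonzero, so $\widehat{\mathcal{N}}_\mathcal{S}(z)=\mathbb{R}\times\{0\}$ there, and the outer limit is $\mathbb{R}\times\{0\}$; symmetrically for $a\neq0,b=0$. When $a=b=0$: points $z$ with exactly one zero coordinate accumulate at $(0,0)$, and at such $z$ the Fr\'echet normal cone is one of the two axes; taking the outer limit over all these gives $(\mathbb{R}\times\{0\})\cup(\{0\}\times\mathbb{R})=\mathcal{S}$, while $\widehat{\mathcal{N}}_\mathcal{S}(0,0)=\{(0,0)\}\subseteq\mathcal{S}$ contributes nothing new, so $\mathcal{N}_\mathcal{S}(0,0)=\mathcal{S}$.

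There is no real obstacle here — this is a verification that the standard variational-analysis machinery produces the displayed table, and the only point requiring a little care is the $a=b=0$ case of the limiting normal cone, where one must check that no vectors \emph{outside} the union of the two axes arise in the outer limit (they cannot, since every $\widehat{\mathcal{N}}_\mathcal{S}(z)$ for $z$ near the origin is contained in $\mathcal{S}$, and $\mathcal{S}$ is closed). One could alternatively cite \cite{Mehlitz2020,Li2023,Chendai} directly, which is presumably what the authors do; I would include the short self-contained derivation above for completeness.
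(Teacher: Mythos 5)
Your derivation is correct in all three cases, and the one genuinely delicate point — that the outer limit defining $\mathcal{N}_{\mathcal{S}}(0,0)$ cannot produce vectors outside $(\mathbb{R}\times\{0\})\cup(\{0\}\times\mathbb{R})$ because every nearby Fr\'echet normal cone is contained in the closed set $\mathcal{S}$ — is handled explicitly. The paper itself gives no proof of this lemma (it is quoted from \cite{Mehlitz2020,Li2023,Chendai}), and your direct computation from the definitions is exactly the standard verification carried out in those references, so there is nothing to compare beyond noting that your self-contained argument is a valid substitute for the citation.
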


The MPSC-tailored linearization cone to $\mathcal{F}$ at $\bar{x}\in\mathcal{F}$ is defined by
\begin{equation*}
  \mathcal{L}^{MPSC}_\mathcal{F}(\bar{x}):=\left\{d\in\mathbb{R}^n:\nabla F(\bar{x})^\top d\in \mathcal{T}_\mathcal{D}(F(\bar{x}))\right\}.
\end{equation*}
By Lemma \ref{pro2.1}, one can obtain the following explicit representation of $\mathcal{L}^{MPSC}_\mathcal{F}(\bar{x})$
\begin{equation}\label{lcone}
  \begin{split}
     \mathcal{L}^{MPSC}_\mathcal{F}(\bar{x})=\left\{\begin{array}{cc}
                                    d\in\mathbb{R}^n: \begin{array}{ll}
                                                        \nabla g_i(\bar{x})^\top d\leq0, &i\in\mathcal{I}_g, \\
                                                        \nabla h_j(\bar{x})^\top d=0, &j\in\mathcal{I}_h,  \\
                                                        \nabla G_k(\bar{x})^\top d=0, &k\in\mathcal{I}_G,  \\
                                                        \nabla H_k(\bar{x})^\top d=0, &k\in\mathcal{I}_H,  \\
                                                        \nabla G_k(\bar{x})^\top d\cdot\nabla H_k(\bar{x})^\top d=0, &k\in\mathcal{I}_{GH}
                                                      \end{array}
                                    \end{array}\right\}.
   \end{split}
\end{equation}

\begin{lemma} \label{pro2.2}
Let $\bar{x}\in \mathcal{F}$. Then the following formulas are true:
\begin{align*}
  \mathcal{T}_\mathcal{F}(\bar{x})&=\bigcup_{(\beta_1,\beta_2)\in\mathcal{P}(\mathcal{I}_{GH})}{\mathcal{T}_{\mathcal{F}_{(\beta_1,\beta_2)}}(\bar{x})}, \\
  \mathcal{L}^{MPSC}_\mathcal{F}(\bar{x})&=\bigcup_{(\beta_1,\beta_2)\in\mathcal{P}(\mathcal{I}_{GH})}{\mathcal{L}_{\mathcal{F}_{(\beta_1,\beta_2)}}(\bar{x})},
\end{align*}
and  $\mathcal{T}_\mathcal{F}(\bar{x})\subseteq \mathcal{L}^{MPSC}_{\mathcal{F}}(\bar{x})$,
where $\mathcal{L}_{\mathcal{F}_{(\beta_1,\beta_2)}}(\bar{x})$ is the linearization cone of $\mathcal{F}_{(\beta_1,\beta_2)}$ at $\bar{x}$ defined by
\begin{equation*}
  \begin{split}
     \mathcal{L}_{\mathcal{F}_{(\beta_1,\beta_2)}}(\bar{x}):=\left\{\begin{array}{cc}
                                    d\in\mathbb{R}^n: \begin{array}{ll}
                                                        \nabla g_i(\bar{x})^\top d\leq0, &i\in\mathcal{I}_g, \\
                                                        \nabla h_j(\bar{x})^\top d=0, &j\in\mathcal{I}_h,  \\
                                                        \nabla G_k(\bar{x})^\top d=0, &k\in\mathcal{I}_G\cup\beta_1,  \\
                                                        \nabla H_k(\bar{x})^\top d=0, &k\in\mathcal{I}_H\cup\beta_2,
                                                      \end{array}
                                    \end{array}\right\}.
   \end{split}
\end{equation*}
\end{lemma}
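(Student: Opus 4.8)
The plan is to prove the three assertions of Lemma~\ref{pro2.2} in the order they are stated, exploiting the decomposition $\mathcal{F}=\bigcup_{(\beta_1,\beta_2)\in\mathcal{P}(\mathcal{I}_{GH})}\mathcal{F}_{(\beta_1,\beta_2)}$ established earlier together with the explicit description \eqref{lcone} of $\mathcal{L}^{MPSC}_\mathcal{F}(\bar x)$. First I would record the elementary fact that the tangent cone is monotone under set inclusion and behaves well with finite unions: for any finite collection of closed sets $\Omega_1,\dots,\Omega_N$ and any $\bar x\in\bigcap_i\Omega_i$ one has $\mathcal{T}_{\bigcup_i\Omega_i}(\bar x)=\bigcup_i\mathcal{T}_{\Omega_i}(\bar x)$. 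The inclusion $\supseteq$ is immediate from $\Omega_i\subseteq\bigcup_i\Omega_i$ and monotonicity of $\mathcal{T}$. For $\subseteq$, take $d\in\mathcal{T}_\mathcal{F}(\bar x)$ with witnessing sequences $t_k\ge 0$ and $x^k\xrightarrow{\mathcal{F}}\bar x$ so that $t_k(x^k-\bar x)\to d$; since $\mathcal{P}(\mathcal{I}_{GH})$ is finite, by passing to a subsequence we may assume all $x^k$ lie in a single $\mathcal{F}_{(\beta_1,\beta_2)}$, whence $d\in\mathcal{T}_{\mathcal{F}_{(\beta_1,\beta_2)}}(\bar x)$. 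Applying this to $\Omega_{(\beta_1,\beta_2)}=\mathcal{F}_{(\beta_1,\beta_2)}$ gives the first formula.

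Next I would establish $\mathcal{L}^{MPSC}_\mathcal{F}(\bar x)=\bigcup_{(\beta_1,\beta_2)}\mathcal{L}_{\mathcal{F}_{(\beta_1,\beta_2)}}(\bar x)$ by a direct pointwise argument on the descriptions in \eqref{lcone} and in Lemma~\ref{pro2.2}. Both sides impose the same constraints $\nabla g_i(\bar x)^\top d\le 0$ ($i\in\mathcal{I}_g$), $\nabla h_j(\bar x)^\top d=0$ ($j\in\mathcal{I}_h$), $\nabla G_k(\bar x)^\top d=0$ ($k\in\mathcal{I}_G$), $\nabla H_k(\bar x)^\top d=0$ ($k\in\mathcal{I}_H$); the only difference lies in the handling of the indices $k\in\mathcal{I}_{GH}$. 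On the left one requires the complementarity-type condition $\nabla G_k(\bar x)^\top d\cdot\nabla H_k(\bar x)^\top d=0$, i.e.\ for each $k\in\mathcal{I}_{GH}$ at least one of $\nabla G_k(\bar x)^\top d=0$, $\nabla H_k(\bar x)^\top d=0$ holds; on the right, for a fixed bipartition $(\beta_1,\beta_2)$ one requires $\nabla G_k(\bar x)^\top d=0$ for $k\in\beta_1$ and $\nabla H_k(\bar x)^\top d=0$ for $k\in\beta_2$. Given $d$ in the left-hand set, define $\beta_1:=\{k\in\mathcal{I}_{GH}:\nabla G_k(\bar x)^\top d=0\}$ and $\beta_2:=\mathcal{I}_{GH}\setminus\beta_1$; then on $\beta_2$ necessarily $\nabla H_k(\bar x)^\top d=0$ by the product condition, so $(\beta_1,\beta_2)\in\mathcal{P}(\mathcal{I}_{GH})$ and $d\in\mathcal{L}_{\mathcal{F}_{(\beta_1,\beta_2)}}(\bar x)$. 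Conversely, membership in any single $\mathcal{L}_{\mathcal{F}_{(\beta_1,\beta_2)}}(\bar x)$ clearly forces the product to vanish for every $k\in\mathcal{I}_{GH}$, so the reverse inclusion holds. This yields the second formula.

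Finally, the inclusion $\mathcal{T}_\mathcal{F}(\bar x)\subseteq\mathcal{L}^{MPSC}_\mathcal{F}(\bar x)$ follows by combining the first two formulas with the classical fact that for a standard nonlinear program the tangent cone is contained in the linearization cone, i.e.\ $\mathcal{T}_{\mathcal{F}_{(\beta_1,\beta_2)}}(\bar x)\subseteq\mathcal{L}_{\mathcal{F}_{(\beta_1,\beta_2)}}(\bar x)$ for each $(\beta_1,\beta_2)$ (each branch problem \eqref{mpsc3} being a standard NLP as noted in the excerpt); taking unions over $\mathcal{P}(\mathcal{I}_{GH})$ gives the claim. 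Alternatively one can argue directly from the chain rule for tangent cones, $\mathcal{T}_\mathcal{F}(\bar x)\subseteq\{d:\nabla F(\bar x)^\top d\in\mathcal{T}_\mathcal{D}(F(\bar x))\}$, together with the product structure $\mathcal{D}=(-\infty,0]^m\times\{0\}^p\times\mathcal{S}^l$ and the tangent-cone formula for $\mathcal{S}$ in Lemma~\ref{pro2.1}. I expect the only genuinely delicate point to be the bookkeeping in the union formula for the tangent cone — specifically, justifying the passage to a subsequence lying entirely in one branch feasible set, which relies crucially on the finiteness of $\mathcal{P}(\mathcal{I}_{GH})$; everything else is a routine comparison of the defining inequalities. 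No constraint qualification is needed anywhere, since these are purely geometric/combinatorial identities about the feasible set and its linearization.
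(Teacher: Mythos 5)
Your proposal is correct and follows essentially the same route as the paper: the paper obtains the two union formulas by citing Lemma 5.1 of Mehlitz (2020) and then deduces $\mathcal{T}_\mathcal{F}(\bar{x})\subseteq \mathcal{L}^{MPSC}_{\mathcal{F}}(\bar{x})$ from the branch-wise inclusion $\mathcal{T}_{\mathcal{F}_{(\beta_1,\beta_2)}}(\bar{x})\subseteq \mathcal{L}_{\mathcal{F}_{(\beta_1,\beta_2)}}(\bar{x})$, exactly as you do. The only difference is that you prove the cited union formulas directly (the finite-union/subsequence argument for the tangent cone, and the explicit choice $\beta_1=\{k\in\mathcal{I}_{GH}:\nabla G_k(\bar{x})^\top d=0\}$, $\beta_2=\mathcal{I}_{GH}\setminus\beta_1$ for the linearization cone), which is a correct, self-contained substitute for the citation.
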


 \begin{proof}
  It immediately follows from Lemma 5.1 of \cite{Mehlitz2020}  that
 \begin{align*}
  \mathcal{T}_\mathcal{F}(\bar{x})&=\bigcup_{(\beta_1,\beta_2)\in\mathcal{P}(\mathcal{I}_{GH})}{\mathcal{T}_{\mathcal{F}_{(\beta_1,\beta_2)}}(\bar{x})}, \\
  \mathcal{L}^{MPSC}_\mathcal{F}(\bar{x})&=\bigcup_{(\beta_1,\beta_2)\in\mathcal{P}(\mathcal{I}_{GH})}{ \mathcal{L}_{\mathcal{F}_{(\beta_1,\beta_2)}}(\bar{x})}.
\end{align*}
Note that for any given $(\beta_1,\beta_2)\in\mathcal{P}(\mathcal{I}_{GH})$, $\mathcal{T}_{\mathcal{F}_{(\beta_1,\beta_2)}}(\bar{x})\subseteq \mathcal{L}_{\mathcal{F}_{(\beta_1,\beta_2)}}(\bar{x})$ by the definitions of tangent cone and linearization cone.
Therefore, $\mathcal{T}_\mathcal{F}(\bar{x})\subseteq \mathcal{L}^{MPSC}_{\mathcal{F}}(\bar{x})$. The proof is completed.
 \end{proof}

%


\begin{lemma}{\rm \cite[Lemma 2.2]{Ribeiro2023}}\label{lem4.1}
Let $\sigma$: $\mathbb{R}^n\rightarrow\mathbb{R}^q$ be a twice continuously differentiable mapping. Assume that the rank of the Jacobian $\nabla \sigma(x)$ is a constant for all $x$ in a neighbourhood of the point $\bar{x}\in\mathbb{R}^n$. Set $D:=\ker(\nabla\sigma (\bar{x}))$ the nullspace of $\nabla\sigma (\bar{x})$. Then there exist neighbourhoods $U$, $V$ and a twice continuously differentiable mapping $\phi$: $U\rightarrow V$ such that
\begin{enumerate}
  \item[{\rm (i)}]
   $\phi(\bar{x})=\bar{x}$ and $\nabla \phi(\bar{x})$ is an $n$-order identity matrix.
  \item[{\rm (ii)}]
   $\sigma(\phi(x+d))=\sigma(\phi(x))$ for all $x\in U$ and $d\in D$ satisfying $x+d\in U$.
\end{enumerate}
In particular, $\sigma(\phi(\bar{x}+d))=\sigma(\phi(\bar{x}))=\sigma(\bar{x})$ for all $d\in D$ satisfying $\bar{x}+d\in U$.
\end{lemma}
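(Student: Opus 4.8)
The statement is essentially the classical rank theorem, with the extra normalization $\nabla\phi(\bar{x})=I$ tracked explicitly, so the plan is to reprove the relevant part by hand rather than cite it as a black box. Let $r$ denote the common rank of $\nabla\sigma$ on a neighbourhood of $\bar{x}$. After permuting the coordinates of $\mathbb{R}^n$ and the components of $\sigma$ — a step that merely pre- and post-composes with fixed permutation matrices and is undone at the end — I would arrange that $\sigma=(\sigma_1,\sigma_2)$ with $\sigma_1\colon\mathbb{R}^n\to\mathbb{R}^r$ and $\sigma_2\colon\mathbb{R}^n\to\mathbb{R}^{q-r}$, and $x=(x',x'')\in\mathbb{R}^r\times\mathbb{R}^{n-r}$, in such a way that the $r\times r$ block $\partial\sigma_1/\partial x'(x)$ is invertible for all $x$ in a connected neighbourhood of $\bar{x}$; this is possible because $\nabla\sigma(\bar{x})$ has rank $r$ and invertibility of a minor is an open condition.

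Next I would introduce the auxiliary map $\Psi(x):=(\sigma_1(x),x'')$. Its Jacobian is block upper triangular with diagonal blocks $\partial\sigma_1/\partial x'$ and $I_{n-r}$, hence invertible near $\bar{x}$, so by the inverse function theorem $\Psi$ is a $C^2$ diffeomorphism from a neighbourhood of $\bar{x}$ onto a neighbourhood of $\Psi(\bar{x})=(\sigma_1(\bar{x}),\bar{x}'')$; denote its local inverse by $\Phi$. Writing $y=(y',y'')$ for points in the image of $\Psi$, one has $\sigma_1(\Phi(y))=y'$, so, setting $\tilde\sigma_2:=\sigma_2\circ\Phi$,
\begin{equation*}
  \nabla(\sigma\circ\Phi)(y)=\begin{pmatrix} I_r & 0 \\ \partial\tilde\sigma_2/\partial y'(y) & \partial\tilde\sigma_2/\partial y''(y)\end{pmatrix},
\end{equation*}
whose rank equals $r+\mathrm{rank}\,\bigl(\partial\tilde\sigma_2/\partial y''(y)\bigr)$.

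Here is the only place the constant rank hypothesis is genuinely used: since $\Phi$ is a local diffeomorphism, $\mathrm{rank}\,\nabla(\sigma\circ\Phi)(y)=\mathrm{rank}\,\nabla\sigma(\Phi(y))=r$ throughout a connected neighbourhood of $\Psi(\bar{x})$ (shrinking so that $\Phi$ maps into the region where $\nabla\sigma$ has rank $r$), which by the displayed identity forces $\partial\tilde\sigma_2/\partial y''\equiv 0$ there; thus $\sigma\circ\Phi$ depends on $y'$ only, locally. Granting this, I would set $\phi:=\Phi\circ\eta$ with
\begin{equation*}
  \eta(x):=\bigl(\sigma_1(\bar{x})+\nabla\sigma_1(\bar{x})(x-\bar{x}),\,x''\bigr),
\end{equation*}
defined on a sufficiently small ball $U\ni\bar{x}$, and take $V$ to be any neighbourhood of $\bar{x}$ containing $\phi(U)$. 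Property (i) follows from $\phi(\bar{x})=\Phi(\Psi(\bar{x}))=\bar{x}$ together with the chain rule: $\eta$ has constant Jacobian equal to $\nabla\Psi(\bar{x})$, while $\nabla\Phi(\Psi(\bar{x}))=\nabla\Psi(\bar{x})^{-1}$, so $\nabla\phi(\bar{x})=I$. For (ii), observe $D=\ker\nabla\sigma(\bar{x})\subseteq\ker\nabla\sigma_1(\bar{x})$, so for $d\in D$ the first $r$ coordinates of $\eta(x)$ and $\eta(x+d)$ coincide (they differ by $\nabla\sigma_1(\bar{x})d=0$), while the remaining $n-r$ coordinates are irrelevant to $\sigma\circ\Phi$; hence $\sigma(\phi(x+d))=\sigma(\phi(x))$ whenever $x,x+d\in U$. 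The final assertion is the case $x=\bar{x}$, using $\phi(\bar{x})=\bar{x}$.

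The main obstacle is the bookkeeping around that constant rank step rather than any deep idea: one must shrink all neighbourhoods (taking them to be balls) so that the pointwise vanishing of $\partial\tilde\sigma_2/\partial y''$ upgrades to genuine independence of $\tilde\sigma_2$ on $y''$ along the relevant segments, and so that $\Phi$ is defined at both $\eta(x)$ and $\eta(x+d)$ for $x,x+d\in U$; one must also check — it is immediate — that the initial coordinate permutation does not disturb $\nabla\phi(\bar{x})=I$, since one undoes exactly the permutation one applied. Everything else reduces to the inverse function theorem and repeated use of the chain rule.
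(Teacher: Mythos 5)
The paper offers no proof of this lemma at all; it is imported verbatim as \cite[Lemma 2.2]{Ribeiro2023}, so there is no internal argument to compare against. Your proposal is a correct, self-contained proof, and it is in fact the standard construction from the constant-rank literature (Ribeiro--Sachine, and before them Andreani--Echag\"ue--Schuverdt): take $\Psi(x)=(\sigma_1(x),x'')$ with an invertible leading block, and set $\phi=\Psi^{-1}\circ\eta$ where $\eta$ is the first-order Taylor expansion of $\Psi$ at $\bar{x}$, so that $\nabla\phi(\bar{x})=\nabla\Psi(\bar{x})^{-1}\nabla\Psi(\bar{x})=I$ and $\sigma\circ\Psi^{-1}$ depends only on $y'$ by the constant-rank hypothesis. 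All the steps check out: the block computation giving $\mathrm{rank}\,\nabla(\sigma\circ\Phi)=r+\mathrm{rank}(\partial\tilde\sigma_2/\partial y'')$ is right, $D\subseteq\ker\nabla\sigma_1(\bar{x})$ makes $\eta(x)$ and $\eta(x+d)$ agree in their first $r$ coordinates, and you correctly isolate the two points needing care: taking the image neighbourhood convex (e.g.\ a product of balls) so that $\partial\tilde\sigma_2/\partial y''\equiv0$ integrates to genuine independence of $y''$ along the segment joining $\eta(x)$ to $\eta(x+d)$, and shrinking $U$ so that $\eta(U)$ lands in the domain of $\Phi$. The only cosmetic remark is that $V$ should simply be taken as the neighbourhood of $\bar{x}$ on which $\Psi$ is a diffeomorphism, and that the $C^2$ regularity of $\Phi=\Psi^{-1}$ (hence of $\phi$) follows from the $C^2$ version of the inverse function theorem; neither affects correctness.
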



\section{Constraint qualifications of (MPSC)}\label{sec4:0}
 In this section, we introduce some new second-order constraint qualifications in terms of \eqref{mpsc1} and discuss the relations among these new constraint qualifications and the existing constraint qualifications.

We first recall the notions of critical cone and critical subspace.
Let $\bar{x}\in \mathcal{F}$ be a feasible point of \eqref{mpsc1}. The MPSC-tailored critical cone of $\mathcal{F}$ at $\bar{x}$ is defined by
\begin{equation}\label{ccone1}
  \mathcal{C}^{MPSC}_\mathcal{F}(\bar{x}):=\left\{d\in\mathcal{L}^{MPSC}_\mathcal{F}(\bar{x}):\nabla f(\bar{x})^\top d\leq0\right\}.
\end{equation}
Observe that if $\bar{x}$ is an $S$-stationary point of \eqref{mpsc1}, then due to \eqref{lcone}, for each corresponding multiplier $(\lambda,\rho,\mu,\nu)\in\mathbb{R}^m_{+}\times\mathbb{R}^p\times\mathbb{R}^l\times\mathbb{R}^l$ satisfying \eqref{ss}, the associated critical cone defined by \eqref{ccone1} can be characterized by
\begin{align}
    \mathcal{C}^{MPSC}_\mathcal{F}(\bar{x})&=\left\{d\in \mathcal{L}^{MPSC}_\mathcal{F}(\bar{x})\,:\,\begin{array}{ll}
                                                                                 \nabla g_i(\bar{x})^\top d=0, & i\in \mathcal{I}^+_g(\bar{x},\lambda)
                                                                               \end{array}\right\}\nonumber\\\label{ccone2}
    &=\left\{\begin{array}{cc}
                                    d\in\mathbb{R}^n\,:\, \begin{array}{ll}
                                                        \nabla g_i(\bar{x})^\top d\leq0, &i\in\mathcal{I}_g\setminus\mathcal{I}^+_g(\bar{x},\lambda), \\
                                                        \nabla g_i(\bar{x})^\top d=0, &i\in \mathcal{I}^+_g(\bar{x},\lambda), \\
                                                        \nabla h_j(\bar{x})^\top d=0, &j\in\mathcal{I}_h,  \\
                                                        \nabla G_k(\bar{x})^\top d=0, &k\in\mathcal{I}_G,  \\
                                                        \nabla H_k(\bar{x})^\top d=0, &k\in\mathcal{I}_H,  \\
                                                        \nabla G_k(\bar{x})^\top d\cdot\nabla H_k(\bar{x})^\top d=0, &k\in\mathcal{I}_{GH}
                                                      \end{array}
                                    \end{array}\right\},
\end{align}
where $\mathcal{I}^+_g(\bar{x},\lambda):=\left\{i\in \mathcal{I}_g:\lambda_i>0\right\}$.

Let $\bar{x}\in \mathcal{F}$ be a feasible point of \eqref{mpsc1}. The MPSC critical subspace of $\mathcal{F}$ at $\bar{x}$ is defined by
\begin{equation}\label{cspace1}
\begin{split}
  \tilde{\mathcal{C}}^{MPSC}_\mathcal{F}(\bar{x}):=\left\{\begin{array}{cc}
  d\in\mathbb{R}^n \,:\,                                      \begin{array}{ll}
                                                        \nabla g_i(\bar{x})^\top d=0, &i\in \mathcal{I}_g, \\
                                                        \nabla h_j(\bar{x})^\top d=0, &j\in\mathcal{I}_h,  \\
                                                        \nabla G_k(\bar{x})^\top d=0, &k\in\mathcal{I}_G\cup\mathcal{I}_{GH},  \\
                                                        \nabla H_k(\bar{x})^\top d=0, &k\in\mathcal{I}_H\cup\mathcal{I}_{GH}
                                                      \end{array}
                                    \end{array}\right\}.
\end{split}
\end{equation}

It is easy to see that  $\tilde{\mathcal{C}}^{MPSC}_\mathcal{F}(\bar{x}) \subseteq \mathcal{L}^{MPSC}_\mathcal{F}(\bar{x})$, $\mathcal{C}^{MPSC}_\mathcal{F}(\bar{x}) \subseteq \mathcal{L}^{MPSC}_\mathcal{F}(\bar{x})$ and that
$\tilde{\mathcal{C}}^{MPSC}_\mathcal{F}(\bar{x})\subseteq\mathcal{C}^{MPSC}_\mathcal{F}(\bar{x}) \subseteq \mathcal{L}^{MPSC}_\mathcal{F}(\bar{x})$ when  $\bar{x}\in \mathcal{F}$ is
an $S$-stationary point of \eqref{mpsc1}. Further, if the strict complementarity condition holds at $\bar{x}$ (that is $\mathcal{I}^+_g(\bar{x},\lambda)=\mathcal{I}_g$), then $\tilde{\mathcal{C}}^{MPSC}_\mathcal{F}(\bar{x})=\mathcal{C}^{MPSC}_\mathcal{F}(\bar{x})$.

We next introduce some new second-order constraint qualifications for \eqref{mpsc1}.

\begin{definition}\label{socq}
Let $\bar{x}\in\mathcal{F}$ be a feasible point of \eqref{mpsc1}. The MPSC-tailored strong second-order constraint qualification (MPSC-SSOCQ) holds at $\bar{x}$ iff, for any given nonzero vector $d\in \mathcal{L}^{MPSC}_\mathcal{F}(\bar{x})$, there exist $\delta>0$, sets $\mathcal{J}\subseteq\mathcal{I}_{GH}$, $\mathcal{K}\subseteq\mathcal{I}_{GH}$, and a twice differentiable arc $\xi$: $[0,\delta)\rightarrow \mathbb{R}^n$ such that
 \begin{subequations}
 \begin{align}\label{ssocq1}
          &\xi(t)\in\mathcal{F},\,\forall\, t\in[0,\delta),\\\label{ssocq2}
          &\xi(0)=\bar{x},\quad \xi'_{+}(0)=d, \\\label{ssocq3}
          &g_i(\xi(t))\equiv0, \,\forall\, t\in[0,\delta),\, i\in \mathcal{I},\\\label{ssocq4}
          &G_k(\xi(t))\equiv0,\,\forall\, t\in[0,\delta),\, k\in \mathcal{I}_{G}\cup\mathcal{J},\\\label{ssocq5}
          &H_k(\xi(t))\equiv0,\, \forall\, t\in[0,\delta),\, k\in \mathcal{I}_{H}\cup\mathcal{K},
        \end{align}
 \end{subequations}
where $\mathcal{I}:=\left\{i\in\mathcal{I}_g:\nabla g_i(\bar{x})^\top d=0\right\}$.
\end{definition}

\begin{definition}
Let $\bar{x}\in\mathcal{F}$ be a feasible point of \eqref{mpsc1}. The MPSC-tailored weak second-order constraint qualification (MPSC-WSOCQ) holds at $\bar{x}$ iff, for any given nonzero vector $\tilde{d}\in\tilde{\mathcal{C}}^{MPSC}_\mathcal{F}(\bar{x})$, there exist $\delta>0$, sets $\mathcal{J}\subseteq\mathcal{I}_{GH}$, $\mathcal{K}\subseteq\mathcal{I}_{GH}$, and a twice differentiable arc $\zeta$: $[0,\delta)\rightarrow \mathbb{R}^n$ such that
 \begin{subequations}
 \begin{align}\label{wsocq1}
          &\zeta(t)\in\mathcal{F},\,\forall\, t\in[0,\delta),\\\label{wsocq2}
          &\zeta(0)=\bar{x},\quad \zeta'_{+}(0)=\tilde{d}, \\\label{wsocq3}
          &g_i(\zeta(t))\equiv0, \,\forall\, t\in[0,\delta),\,  i\in \mathcal{I}_g,\\\label{wsocq4}
          &G_k(\zeta(t))\equiv0, \,\forall\, t\in[0,\delta),\,  k\in \mathcal{I}_{G}\cup \mathcal{J},\\\label{wsocq5}
          &H_k(\zeta(t))\equiv0,\, \forall\, t\in[0,\delta),\,  k\in \mathcal{I}_{H}\cup \mathcal{K}.
        \end{align}
 \end{subequations}
\end{definition}

\begin{remark}
\begin{enumerate}
  \item[{\rm (i)}]
From the definition of $\tilde{d}\in \tilde{\mathcal{C}}^{MPSC}_\mathcal{F}(\bar{x})$, we deduce that $i\in \mathcal{I}_g $ implies $i\in \mathcal{I}=\left\{i\in\mathcal{I}_g:\nabla g_i(\bar{x})^\top d=0\right\}$. Taking into account that $\tilde{\mathcal{C}}^{MPSC}_\mathcal{F}(\bar{x})\subseteq \mathcal{L}^{MPSC}_\mathcal{F}(\bar{x})$, it yields that the MPSC-SSOCQ implies the MPSC-WSOCQ.
It is worth noting that MPSC-SSOCQ and MPSC-WSOCQ are constraint qualifications based on geometric concepts which do not depend on the algebraic representation of the feasible set $\mathcal{F}$.

 \item[{\rm (ii)}]
We mention that Maciel et al. \cite{Maciel2011} introduced an analogous (strong) second-order constraint qualification (SOCQ) for constrained vector optimization problems, and established the strong second-order necessary condition under this constraint qualification which guarantee the multipliers associated with the objective functions are all positive. However, we should also mention that the relation \eqref{ssocq3} in SOCQ of \cite{Maciel2011} holds for the whole active set index $\mathcal{I}_{g}$ instead of $\mathcal{I}$.
Besides, MPSC-SSOCQ and MPSC-WSOCQ are motivated by \cite{Guo2013,Fiacco1983,Fiacco1968,Andreani2010s} since the classic strong second-order necessary condition with classical critical cone  was applied to study second-order optimality conditions for nonlinear programming problems and the classic weak second-order necessary condition (WSONC) with classical critical subspace was used to study the second-order practical algorithms.
\end{enumerate}
\end{remark}

We next investigate the relationships among the MPSC-SSOCQ, MPSC-WSOCQ and the MPSC version of constraint qualifications introduced in Section \ref{sec2}.

\begin{lemma}\label{th4.2}
Assume that the MPSC-RCRCQ holds at $\bar{x}\in \mathcal{F}$. Then the MPSC-SSOCQ holds at $\bar{x}$.
\end{lemma}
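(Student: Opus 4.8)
The plan is to reduce the MPSC-SSOCQ to the classical constant-rank story for one well-chosen branch problem \eqref{mpsc3}, and then to manufacture the required arc $\xi$ by applying the local diffeomorphism $\phi$ furnished by Lemma \ref{lem4.1}. Fix a nonzero $d\in\mathcal{L}^{MPSC}_\mathcal{F}(\bar{x})$. By Lemma \ref{pro2.2} there is a bipartition $(\beta_1,\beta_2)\in\mathcal{P}(\mathcal{I}_{GH})$ with $d\in\mathcal{L}_{\mathcal{F}_{(\beta_1,\beta_2)}}(\bar{x})$, and we set $\mathcal{J}:=\beta_1$, $\mathcal{K}:=\beta_2$; note $(\mathcal I_G\cup\beta_1)\cap(\mathcal I_H\cup\beta_2)=\emptyset$, so $\xi(t)\in\mathcal F_{(\beta_1,\beta_2)}\subseteq\mathcal F$ will be automatic once $\xi$ is feasible for that branch. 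Now split the active inequality set for $d$: let $\mathcal{I}:=\{i\in\mathcal I_g:\nabla g_i(\bar x)^\top d=0\}$. The branch-feasibility active system we want to ``freeze'' along the arc is the map $\sigma$ whose component functions are $(g_i)_{i\in\mathcal I}$, $(h_j)_{j\in\mathcal I_h}$, $(G_k)_{k\in\mathcal I_G\cup\beta_1}$, $(H_k)_{k\in\mathcal I_H\cup\beta_2}$. Observe $d\in\ker\nabla\sigma(\bar x)=:D$, since $d$ lies in the linearization cone of the branch and annihilates exactly the $g_i$, $i\in\mathcal I$.

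Next I would invoke MPSC-RCRCQ to get the constant-rank hypothesis of Lemma \ref{lem4.1} for $\sigma$: taking $\mathcal I_1=\mathcal I\subseteq\mathcal I_g$, $\mathcal I_3=\beta_1\subseteq\mathcal I_{GH}$, $\mathcal I_4=\beta_2\subseteq\mathcal I_{GH}$ in \eqref{rcrcq}, the family $\{\nabla g_i(x)\}_{i\in\mathcal I}\cup\{\nabla h_j(x)\}_{j\in\mathcal I_h}\cup\{\nabla G_k(x)\}_{k\in\beta_1\cup\mathcal I_G}\cup\{\nabla H_k(x)\}_{k\in\beta_2\cup\mathcal I_H\}$ has constant rank on a ball $\mathbb B_\varepsilon(\bar x)$ — that is, $\nabla\sigma(x)$ has constant rank near $\bar x$. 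Lemma \ref{lem4.1} then gives neighbourhoods $U,V$ and a twice continuously differentiable $\phi:U\to V$ with $\phi(\bar x)=\bar x$, $\nabla\phi(\bar x)=I_n$, and $\sigma(\phi(\bar x+t d))=\sigma(\bar x)$ for all small $t$ with $\bar x+td\in U$. Define $\xi(t):=\phi(\bar x+t d)$ on $[0,\delta)$ for $\delta>0$ small enough that $\bar x+td\in U$; then $\xi$ is twice differentiable, $\xi(0)=\bar x$, and $\xi'_+(0)=\nabla\phi(\bar x)d=d$, so \eqref{ssocq2} holds, while $\sigma(\xi(t))\equiv\sigma(\bar x)$ gives exactly \eqref{ssocq3}, \eqref{ssocq4}, \eqref{ssocq5}.

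It remains to check \eqref{ssocq1}, i.e. $\xi(t)\in\mathcal F$ for $t\in[0,\delta)$, which is the part requiring the most care. The equality constraints $h_j$, the forced $G_k$ ($k\in\mathcal I_G\cup\beta_1$) and $H_k$ ($k\in\mathcal I_H\cup\beta_2$) vanish identically by construction of $\sigma$; for the complementarity constraints this already forces $G_k(\xi(t))H_k(\xi(t))=0$ for every $k\in\{1,\dots,l\}$, since for $k\in\mathcal I_G$ we have $G_k(\xi(t))\equiv 0$, for $k\in\mathcal I_H$ we have $H_k(\xi(t))\equiv 0$, and for $k\in\mathcal I_{GH}=\beta_1\cup\beta_2$ one of the two factors is frozen at $0$. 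The inequalities need the usual shrinking-of-$\delta$ argument: for $i\in\mathcal I$ we have $g_i(\xi(t))\equiv 0\le 0$; for $i\in\mathcal I_g\setminus\mathcal I$ we have $g_i(\bar x)=0$ but $\nabla g_i(\bar x)^\top d<0$ by definition of $\mathcal I$, so $\frac{d}{dt}\big|_{0^+}g_i(\xi(t))=\nabla g_i(\bar x)^\top d<0$ and hence $g_i(\xi(t))<0$ for $t$ in a (possibly smaller) right-neighbourhood of $0$; and for inactive $i$ ($g_i(\bar x)<0$) continuity keeps $g_i(\xi(t))<0$. Taking $\delta$ to be the minimum of these finitely many thresholds gives $\xi(t)\in\mathcal F$ on $[0,\delta)$, completing the verification of all of \eqref{ssocq1}--\eqref{ssocq5}. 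The one genuine subtlety to flag is the bookkeeping ensuring $\mathcal J=\beta_1$ and $\mathcal K=\beta_2$ are admissible choices (subsets of $\mathcal I_{GH}$) and that the branch-feasible set obtained this way is indeed a subset of $\mathcal F$ — both hold because $(\beta_1,\beta_2)$ is a bipartition of $\mathcal I_{GH}$ and $\mathcal F=\bigcup_{(\beta_1,\beta_2)}\mathcal F_{(\beta_1,\beta_2)}$ — after which the rest is the routine constant-rank arc construction.
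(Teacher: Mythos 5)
Your proof is correct and follows essentially the same route as the paper: invoke MPSC-RCRCQ to get constant rank of the active-constraint Jacobian $\nabla\sigma$ near $\bar{x}$, apply Lemma \ref{lem4.1} to obtain the map $\phi$, set $\xi(t):=\phi(\bar{x}+td)$, and then verify \eqref{ssocq1}--\eqref{ssocq5} by the standard continuity and first-order arguments. The only (harmless) difference is your choice $\mathcal{J}=\beta_1$, $\mathcal{K}=\beta_2$ via a branch from Lemma \ref{pro2.2}, whereas the paper takes the maximal sets $\mathcal{J}=\{k\in\mathcal{I}_{GH}:\nabla G_k(\bar{x})^\top d=0\}$ and $\mathcal{K}=\{k\in\mathcal{I}_{GH}:\nabla H_k(\bar{x})^\top d=0\}$; both satisfy $d\in\ker\nabla\sigma(\bar{x})$ and $\mathcal{J}\cup\mathcal{K}=\mathcal{I}_{GH}$, which is all the argument needs.
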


\begin{proof}
For any  $d\in\mathcal{L}^{MPSC}_\mathcal{F}(\bar{x})$,
let the index sets $\mathcal{J}:= \left\{k\in\mathcal{I}_{GH}\,:\, \nabla G_k(\bar{x})^\top d=0 \right\}$
and $\mathcal{K}:=\left\{k\in\mathcal{I}_{GH} \,:\, \nabla H_k(\bar{x})^\top d=0 \right\}$.
We define a function $\sigma: \mathbb{R}^n\rightarrow\mathbb{R}^{|\mathcal{I}|+|\mathcal{I}_G\cup\mathcal{J}|+|\mathcal{I}_H\cup\mathcal{K}|+p}$ by
\begin{equation*}
  \sigma(x):= \left(g_{\mathcal{I}}(x), G_{\mathcal{I}_G\cup\mathcal{J}}(x), H_{\mathcal{I}_H\cup\mathcal{K}}(x), h(x) \right),
\end{equation*}
where $\mathcal{I}$ was defined in Definition \ref{socq}. So, $d\in \ker \nabla \sigma(\bar{x})$. Since the MPSC-RCRCQ holds at $\bar{x}$, then there exists $\varepsilon>0$ such that the rank of $\nabla \sigma$ is a constant in a neighbourhood $\mathbb{B}_\varepsilon(\bar{x})$ of $\bar{x}$. From Lemma \ref{lem4.1}, we obtain that there exist neighbourhoods $U$, $V$ and a twice continuously differentiable mapping $\phi$: $U\rightarrow V$ such that $\phi(\bar{x})=\bar{x}$, $\nabla \phi(\bar{x})$ is an $n$-order identity matrix and $\sigma(\phi(\bar{x}+d))=\sigma(\bar{x})$ for all $d\in \ker\nabla \sigma(\bar{x}) $ satisfying $\bar{x}+d\in U$. Then there exists $\delta>0$ such that $\bar{x}+td\in U$ for all $t\in [0,\delta)$ and define $\xi$: $[0,\delta)\rightarrow\mathbb{R}^n$ by $\xi(t):=\phi(\bar{x}+td)$. Therefore, we have
\begin{equation*}
  \xi(0)=\phi(\bar{x})=\bar{x} \textrm{ and } \xi'_+(0)=d,
\end{equation*}
which imply \eqref{ssocq2}. Furthermore, one has
\begin{equation*}
\begin{split}
  \left( g_{\mathcal{I}}(\xi(t)),G_{\mathcal{I}_G\cup\mathcal{J}}(\xi(t)),H_{\mathcal{I}_H\cup\mathcal{K}}(\xi(t)),h(\xi(t)) \right)
  =&\sigma(\xi(t))=\sigma(\phi(\bar{x}+td))=\sigma(\bar{x})\\
  =& \left( g_{\mathcal{I}}(\bar{x}),G_{\mathcal{I}_G\cup\mathcal{J}}(\bar{x}),H_{\mathcal{I}_H\cup\mathcal{K}}(\bar{x}),h(\bar{x}) \right)\\
  =&0,\,\,\forall\, t\in[0,\delta),
\end{split}
\end{equation*}
where the third equality follows from $td\in\ker\nabla\sigma(\bar{x})$ for all $t\in[0,\delta)$, this implies the validity of \eqref{ssocq3}-\eqref{ssocq5}.

 Now let us prove the feasibility of the arc $\xi$ for $t>0$ sufficiently small. We consider the following cases:
\begin{enumerate}
  \item[{\rm (i)}]
  For each $k=1,2,\cdots, l$, it follows from $\mathcal{J}\cup\mathcal{K}=\mathcal{I}_{GH}$ that $G_k(\xi(t))H_k(\xi(t))\equiv0$ for arbitrary $t\in[0,\delta)$.
  \item[{\rm (ii)}]
   For $i\notin \mathcal{I}_g$, due to $g_i(\xi(0))=g_i(\bar{x})<0$, it follows straightly from the continuity of $g_i$ and $\xi$ that $g_i(\xi(t))<0$ for all $t\in[0,\delta)$ for $\delta$ small enough.
  \item[{\rm (iii)}]
  For $i\in \mathcal{I}_g\setminus\mathcal{I}$, by the chain rule, we have $(g_i\circ\xi)'_+(0)=\nabla g_i(\bar{x})^\top d<0$. Moreover, we get
  \begin{equation*}
    (g_i\circ\xi)'_+(0)=\lim_{t\rightarrow0^+}\frac{(g_i\circ\xi)(t)-(g_i\circ\xi)(0)}{t}<0,
  \end{equation*}
which implies that $g_i(\xi(t))<g_i(\xi(0))=g_i(\bar{x})=0$ for all $t\in[0,\delta)$, for $\delta$ small enough.
\end{enumerate}
Therefore \eqref{ssocq1} is satisfied. The proof is completed.
\end{proof}

We next are ready to establish the relations among the MPSC-SSOCQ, MPSC-PCRSC and the MPSC-ACQ as well as the MPSC-WCR and the MPSC-WSOCQ.

\begin{lemma}\label{th:SSOCQ-ACQ}
Assume that the MPSC-SSOCQ holds at $\bar{x}\in \mathcal{F}$. Then the MPSC-ACQ holds at $\bar{x}$.
\end{lemma}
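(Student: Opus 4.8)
The goal is to show $\mathcal{L}^{MPSC}_\mathcal{F}(\bar{x}) = \mathcal{T}_\mathcal{F}(\bar{x})$ under MPSC-SSOCQ. By Lemma \ref{pro2.2} we always have the inclusion $\mathcal{T}_\mathcal{F}(\bar{x}) \subseteq \mathcal{L}^{MPSC}_\mathcal{F}(\bar{x})$, so the only thing to prove is the reverse inclusion $\mathcal{L}^{MPSC}_\mathcal{F}(\bar{x}) \subseteq \mathcal{T}_\mathcal{F}(\bar{x})$. The plan is to take an arbitrary $d \in \mathcal{L}^{MPSC}_\mathcal{F}(\bar{x})$ and produce $d \in \mathcal{T}_\mathcal{F}(\bar{x})$; the case $d = 0$ is trivial since $0$ always lies in the tangent cone, so assume $d \neq 0$ and invoke MPSC-SSOCQ.

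First I would apply Definition \ref{socq} to the nonzero vector $d$: this yields $\delta > 0$, index sets $\mathcal{J}, \mathcal{K} \subseteq \mathcal{I}_{GH}$, and a twice-differentiable arc $\xi\colon [0,\delta) \to \mathbb{R}^n$ satisfying \eqref{ssocq1}--\eqref{ssocq5}. The key facts I need are only \eqref{ssocq1} (feasibility of the arc, $\xi(t) \in \mathcal{F}$ for all $t \in [0,\delta)$) and \eqref{ssocq2} ($\xi(0) = \bar{x}$ and $\xi'_+(0) = d$); the conditions \eqref{ssocq3}--\eqref{ssocq5} are not needed for this particular conclusion. Then I would pick any sequence $t_k \downarrow 0$ with $t_k \in (0,\delta)$, set $x^k := \xi(t_k)$, and note $x^k \in \mathcal{F}$ with $x^k \to \xi(0) = \bar{x}$ by continuity, so $x^k \xrightarrow{\mathcal{F}} \bar{x}$. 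Finally, with $\tau_k := 1/t_k \geq 0$ one has
\[
\tau_k(x^k - \bar{x}) = \frac{\xi(t_k) - \xi(0)}{t_k} \longrightarrow \xi'_+(0) = d,
\]
which is exactly the defining property of the tangent cone, giving $d \in \mathcal{T}_\mathcal{F}(\bar{x})$.

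Since $d \in \mathcal{L}^{MPSC}_\mathcal{F}(\bar{x})$ was arbitrary, this establishes $\mathcal{L}^{MPSC}_\mathcal{F}(\bar{x}) \subseteq \mathcal{T}_\mathcal{F}(\bar{x})$, and combined with the standing inclusion from Lemma \ref{pro2.2} we conclude equality, i.e.\ MPSC-ACQ holds at $\bar{x}$. There is essentially no obstacle here: the entire content of the proof is bookkeeping with the difference quotient defining the arc's one-sided derivative, and MPSC-SSOCQ has been designed precisely so that every linearized direction is realized by a feasible arc. The only point worth a sentence of care is checking that $\xi'_+(0)$ exists in the ordinary sense (it does, since $\xi$ is assumed twice differentiable on $[0,\delta)$ with a right derivative at $0$) so that the limit of difference quotients is genuinely $d$.
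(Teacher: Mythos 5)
Your proposal is correct and follows essentially the same route as the paper: both establish $\mathcal{T}_\mathcal{F}(\bar{x})\subseteq\mathcal{L}^{MPSC}_\mathcal{F}(\bar{x})$ via Lemma \ref{pro2.2} and obtain the reverse inclusion by applying MPSC-SSOCQ to a nonzero $d$, taking $t_k\downarrow 0$, and recognizing the difference quotient $\frac{\xi(t_k)-\xi(0)}{t_k}\to d$ as the defining property of the tangent cone. Your explicit handling of the $d=0$ case is a minor (and welcome) addition the paper leaves implicit.
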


\begin{proof}
It follows from Lemma \ref{pro2.2} that the inclusion $\mathcal{T}_\mathcal{F}(\bar{x})\subseteq \mathcal{L}^{MPSC}_\mathcal{F}(\bar{x})$ holds.
Let us show the reverse inclusion $\mathcal{T}_\mathcal{F}(\bar{x})\supseteq \mathcal{L}^{MPSC}_\mathcal{F}(\bar{x})$. Take any nonzero vector $d\in\mathcal{L}^{MPSC}_\mathcal{F}(\bar{x})$, we conclude from the MPSC-SSOCQ assumption that there exist $\delta>0$ and a twice differentiable arc $\xi$: $[0,\delta)\rightarrow \mathbb{R}^n$ such that \eqref{ssocq1} and \eqref{ssocq2} hold. We find a sequence $\{t_k\}_{k\in \mathbb{N}}\subseteq (0,\delta)$ with $t_k\rightarrow 0^+$, and consequently, we have
\begin{equation*}
  d=\xi'_+(0)=\lim_{k\rightarrow\infty}\frac{\xi(t_k)-\xi(0)}{t_k}.
\end{equation*}
Together with Definition \ref{socq} yields that $d\in \mathcal{T}_\mathcal{F}(\bar{x})$. The proof is completed.
\end{proof}

\begin{lemma}\label{MPSC:P:CRSC-ACQ}
  Assume that the MPSC-PCRSC  holds at $\bar{x}\in \mathcal{F}$. Then the  MPSC-ACQ holds at $\bar{x}$.
\end{lemma}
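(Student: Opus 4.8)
The plan is to mimic the proof of Lemma~\ref{th:SSOCQ-ACQ}, but working branch by branch using the piecewise structure supplied by Lemma~\ref{pro2.2}. By Lemma~\ref{pro2.2} we already have $\mathcal{T}_\mathcal{F}(\bar{x})\subseteq\mathcal{L}^{MPSC}_\mathcal{F}(\bar{x})$, so only the reverse inclusion $\mathcal{L}^{MPSC}_\mathcal{F}(\bar{x})\subseteq\mathcal{T}_\mathcal{F}(\bar{x})$ needs to be shown. Again by Lemma~\ref{pro2.2}, it suffices to prove that for each $(\beta_1,\beta_2)\in\mathcal{P}(\mathcal{I}_{GH})$ one has
\[
  \mathcal{L}_{\mathcal{F}_{(\beta_1,\beta_2)}}(\bar{x})\subseteq\mathcal{T}_{\mathcal{F}_{(\beta_1,\beta_2)}}(\bar{x}),
\]
i.e. that the \emph{ordinary} ACQ holds at $\bar{x}$ for each branch problem \eqref{mpsc3}; taking the union over $\mathcal{P}(\mathcal{I}_{GH})$ and using $\mathcal{F}_{(\beta_1,\beta_2)}\subseteq\mathcal{F}$ (hence $\mathcal{T}_{\mathcal{F}_{(\beta_1,\beta_2)}}(\bar{x})\subseteq\mathcal{T}_{\mathcal{F}}(\bar{x})$) then yields $\mathcal{L}^{MPSC}_\mathcal{F}(\bar{x})\subseteq\mathcal{T}_\mathcal{F}(\bar{x})$.

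So fix $(\beta_1,\beta_2)\in\mathcal{P}(\mathcal{I}_{GH})$. The MPSC-PCRSC assumption says precisely that the constant rank of subspace component (CRSC) condition holds at $\bar{x}$ for the standard nonlinear problem \eqref{mpsc3}. The next step is to invoke the known fact from the nonlinear-programming literature that CRSC is a constraint qualification, and in fact implies ACQ for a standard nonlinear program; this is exactly the content cited around \cite[p.~740]{GuoZL} (and used implicitly in the CRSC literature, e.g.\ Andreani--Haeser--Schuverdt--Silva). Concretely, one shows $\mathcal{L}_{\mathcal{F}_{(\beta_1,\beta_2)}}(\bar{x})\subseteq\mathcal{T}_{\mathcal{F}_{(\beta_1,\beta_2)}}(\bar{x})$ by taking a direction $d$ in the linearization cone, splitting the active inequality indices $\mathcal{I}_g$ into the "subspace" part $\mathcal{I}^{-}_g=\mathcal{I}_0$ (those $i$ with $\nabla g_i(\bar{x})^\top d=0$ for \emph{all} feasible directions) and the rest, applying the rank-theorem device (Lemma~\ref{lem4.1}) to the map $\sigma$ collecting $g_{\mathcal{I}_0}$, $h$, $G_{\mathcal{I}_G\cup\beta_1}$, $H_{\mathcal{I}_H\cup\beta_2}$ whose Jacobian has locally constant rank by PCRSC, and thereby constructing a feasible arc with tangent $d$ — the inequalities $g_i$ with $i\in\mathcal{I}_g\setminus\mathcal{I}_0$ (which necessarily satisfy $\nabla g_i(\bar{x})^\top d<0$, or can be perturbed to do so) remain strictly satisfied for small $t$ by continuity, exactly as in cases (ii)--(iii) of the proof of Lemma~\ref{th4.2}. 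Since this argument is the standard CRSC-implies-ACQ proof for a fixed standard NLP, the cleanest route is simply to cite it rather than reproduce it.

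The main obstacle — really the only delicate point — is the bookkeeping of the inequality constraints: the CRSC rank condition is imposed only on the gradients indexed by $\mathcal{I}_0=\mathcal{I}^{-}_g$, not on all of $\mathcal{I}_g$, so when building the arc one must be careful that the remaining active inequalities are handled by the strict-descent argument and that $\mathcal{I}_0$ is characterized correctly as in Remark~\ref{re4.1}(ii). Once the branchwise reduction is in place, everything else is either Lemma~\ref{pro2.2} (to pass between $\mathcal{F}$ and the branches) or the already-established NLP fact that CRSC implies ACQ, so I would keep the write-up short: reduce to branches, quote CRSC$\Rightarrow$ACQ for \eqref{mpsc3}, and conclude by unioning. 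The proof ends with "The proof is completed."
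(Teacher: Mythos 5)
Your proposal is correct and follows essentially the same route as the paper: reduce to the branch problems via Lemma~\ref{pro2.2}, invoke the known fact that CRSC implies ACQ for each standard nonlinear program \eqref{mpsc3} (the paper cites \cite[Corollary 4.2]{GuoZL} for exactly this, giving $\mathcal{T}_{\mathcal{F}_{(\beta_1,\beta_2)}}(\bar{x})=\mathcal{L}_{\mathcal{F}_{(\beta_1,\beta_2)}}(\bar{x})$), and then take the union over $\mathcal{P}(\mathcal{I}_{GH})$. Your additional sketch of how the CRSC$\Rightarrow$ACQ argument works is sound but, as you anticipated, the paper simply cites it.
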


\begin{proof}
 If the MPSC-PCRSC  holds at $\bar{x}\in \mathcal{F}$, then  for each $(\beta_1,\beta_2)\in\mathcal{P}(\mathcal{I}_{GH})$,
 the  constant rank of subspace component (CRCS) holds at $\bar{x}$ for nonlinear problem \eqref{mpsc3}.
 It follows from \cite[Corollary 4.2]{GuoZL} that for each $(\beta_1,\beta_2)\in\mathcal{P}(\mathcal{I}_{GH})$, $\mathcal{T}_{\mathcal{F}_{(\beta_1,\beta_2)}}(\bar{x})= \mathcal{L}_{\mathcal{F}_{(\beta_1,\beta_2)}}(\bar{x})$.
 Together with Lemma \ref{pro2.2} yields that
\begin{align*}
  \mathcal{T}_\mathcal{F}(\bar{x})=  \mathcal{L}^{MPSC}_\mathcal{F}(\bar{x}).
\end{align*}
Consequently, the MPSC-ACQ holds at $\bar{x}$.
The proof is completed.
\end{proof}

\begin{lemma}\label{le:CRSC-SSOCQ}
 Assume that the MPSC-PCRSC holds at $\bar{x}\in \mathcal{F}$. Then the MPSC-SSOCQ holds at $\bar{x}$.
\end{lemma}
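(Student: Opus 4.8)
The plan is to reduce the MPSC-PCRSC assumption, which is stated branch-by-branch, to the single-branch situation already handled in Lemma \ref{th4.2}, and then essentially re-run that argument. Fix a nonzero $d\in\mathcal{L}^{MPSC}_\mathcal{F}(\bar{x})$. By Lemma \ref{pro2.2} we have $\mathcal{L}^{MPSC}_\mathcal{F}(\bar{x})=\bigcup_{(\beta_1,\beta_2)\in\mathcal{P}(\mathcal{I}_{GH})}\mathcal{L}_{\mathcal{F}_{(\beta_1,\beta_2)}}(\bar{x})$, so there is a bipartition $(\beta_1,\beta_2)\in\mathcal{P}(\mathcal{I}_{GH})$ with $d\in\mathcal{L}_{\mathcal{F}_{(\beta_1,\beta_2)}}(\bar{x})$. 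Concretely this means $\nabla g_i(\bar{x})^\top d\le 0$ for $i\in\mathcal{I}_g$, $\nabla h_j(\bar{x})^\top d=0$ for $j\in\mathcal{I}_h$, $\nabla G_k(\bar{x})^\top d=0$ for $k\in\mathcal{I}_G\cup\beta_1$, and $\nabla H_k(\bar{x})^\top d=0$ for $k\in\mathcal{I}_H\cup\beta_2$. We will produce the arc required by MPSC-SSOCQ using exactly the sets $\mathcal{J}:=\beta_1$ and $\mathcal{K}:=\beta_2$, which partition $\mathcal{I}_{GH}$ so that the product constraints $G_kH_k\equiv 0$ will hold automatically along the arc.

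Next I would invoke the constant-rank structure. Define $\mathcal{I}:=\{i\in\mathcal{I}_g:\nabla g_i(\bar{x})^\top d=0\}$ as in Definition \ref{socq}, and note that by Remark \ref{re4.1}(ii), $\mathcal{I}\subseteq\mathcal{I}_g^- = \mathcal{I}_0 = \{i\in\mathcal{I}_g:\nabla g_i(\bar{x})^\top d'=0\ \forall d'\in\mathcal{L}_{\mathcal{F}_{(\beta_1,\beta_2)}}(\bar{x})\}$ — indeed every $d'$ in that linearization cone satisfies $\nabla g_i(\bar{x})^\top d'\le 0$, and $\mathcal{I}_0$ is precisely the subspace-component index set appearing in the MPSC-PCRSC for this branch. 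The MPSC-PCRSC hypothesis gives $\varepsilon>0$ such that the family $\{\nabla g_i(x)\}_{i\in\mathcal{I}_0}\cup\{\nabla h_j(x)\}_{j\in\mathcal{I}_h}\cup\{\nabla G_k(x)\}_{k\in\mathcal{I}_G\cup\beta_1}\cup\{\nabla H_k(x)\}_{k\in\mathcal{I}_H\cup\beta_2}$ has constant rank on $\mathbb{B}_\varepsilon(\bar{x})$. Since $\mathcal{I}\subseteq\mathcal{I}_0$, I then need the sub-family indexed by $\mathcal{I}$ (rather than all of $\mathcal{I}_0$) to have constant rank as well; this is the delicate point. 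The standard trick is that, because $\mathcal{I}\subseteq\mathcal{I}_0$ and the $\mathcal{I}_0$-gradients at $\bar x$ are linear combinations of the others within the linearization subspace, one can argue — via the characterization of $\mathcal{I}_0$ and a CRCS-type argument à la \cite{GuoZL} — that dropping indices in $\mathcal{I}_0\setminus\mathcal{I}$ preserves constant rank; alternatively one replaces $\mathcal{I}$ by $\mathcal{I}_0$ in the map $\sigma$ below, which still has $d$ in its kernel (since $d\in\mathcal{L}_{\mathcal{F}_{(\beta_1,\beta_2)}}(\bar x)$ forces $\nabla g_i(\bar x)^\top d=0$ for all $i\in\mathcal{I}_0$) and whose constant rank is exactly what MPSC-PCRSC asserts.

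With constant rank in hand, the rest mirrors Lemma \ref{th4.2} verbatim. Set
\begin{equation*}
\sigma(x):=\bigl(g_{\mathcal{I}_0}(x),\,G_{\mathcal{I}_G\cup\beta_1}(x),\,H_{\mathcal{I}_H\cup\beta_2}(x),\,h(x)\bigr),
\end{equation*}
so that $d\in\ker\nabla\sigma(\bar{x})$ and $\nabla\sigma$ has constant rank near $\bar{x}$. Apply Lemma \ref{lem4.1} to get neighbourhoods $U,V$ and a twice continuously differentiable $\phi:U\to V$ with $\phi(\bar{x})=\bar{x}$, $\nabla\phi(\bar{x})=I$, and $\sigma(\phi(\bar{x}+td))=\sigma(\bar{x})=0$ for $t$ small. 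Define $\xi(t):=\phi(\bar{x}+td)$ on $[0,\delta)$. Then $\xi(0)=\bar{x}$, $\xi'_+(0)=d$ gives \eqref{ssocq2}; the identity $\sigma\circ\xi\equiv 0$ gives $h(\xi(t))\equiv 0$, $g_i(\xi(t))\equiv 0$ for $i\in\mathcal{I}_0\supseteq\mathcal{I}$ (hence \eqref{ssocq3}), and $G_k(\xi(t))\equiv 0$ for $k\in\mathcal{I}_G\cup\beta_1=\mathcal{I}_G\cup\mathcal{J}$, $H_k(\xi(t))\equiv 0$ for $k\in\mathcal{I}_H\cup\beta_2=\mathcal{I}_H\cup\mathcal{K}$ (hence \eqref{ssocq4}–\eqref{ssocq5}). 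Feasibility \eqref{ssocq1} follows exactly as in Lemma \ref{th4.2}: the product constraints vanish because $\mathcal{J}\cup\mathcal{K}=\mathcal{I}_{GH}$; inactive $g_i$ ($i\notin\mathcal{I}_g$) stay negative by continuity; and for $i\in\mathcal{I}_g\setminus\mathcal{I}$ the chain rule gives $(g_i\circ\xi)'_+(0)=\nabla g_i(\bar{x})^\top d<0$, so $g_i(\xi(t))<0$ for small $t>0$. Shrinking $\delta$ if necessary makes all of this simultaneous, establishing MPSC-SSOCQ at $\bar{x}$.

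The main obstacle is the second paragraph: justifying that one may work with a constant-rank subfamily adapted to the chosen $d$ (the index set $\mathcal{I}$, or safely $\mathcal{I}_0$) rather than the full family in the MPSC-PCRSC definition, and in particular that $d$ lies in the kernel of the corresponding $\sigma$. I expect this to hinge on the identity $\mathcal{I}_g^- = \mathcal{I}_0$ from Remark \ref{re4.1}(ii) together with the standard fact, used in the CRCS theory of \cite{GuoZL}, that indices outside $\mathcal{I}_0$ never force equalities on critical-cone directions while indices inside $\mathcal{I}_0$ are forced to equalities — so enlarging $\mathcal{I}$ to $\mathcal{I}_0$ inside $\sigma$ costs nothing. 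Everything after that is a routine transcription of the proof of Lemma \ref{th4.2}.
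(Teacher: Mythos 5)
Your overall strategy---select the branch $(\beta_1,\beta_2)$ containing $d$ via Lemma \ref{pro2.2}, set $\mathcal{J}=\beta_1$, $\mathcal{K}=\beta_2$, and rerun the constant-rank/arc construction of Lemma \ref{th4.2}---is exactly the paper's route. However, the step you yourself flag as the delicate point is resolved with an inclusion that goes the wrong way. You claim $\mathcal{I}\subseteq\mathcal{I}^{-}_g=\mathcal{I}_{0}$, but the true inclusion is $\mathcal{I}_{0}\subseteq\mathcal{I}$: an index whose gradient annihilates \emph{every} $d'\in\mathcal{L}_{\mathcal{F}_{(\beta_1,\beta_2)}}(\bar{x})$ in particular annihilates your chosen $d$, whereas an index that happens to satisfy $\nabla g_i(\bar{x})^\top d=0$ for the particular (possibly boundary) direction $d$ need not lie in $\mathcal{I}_{0}$. (Take $\mathcal{L}=\{d\in\mathbb{R}^2:-d_1\le 0,\ -d_2\le 0\}$ and $d=(0,1)^\top$: the first constraint is active at $d$ but not on all of $\mathcal{L}$.) Your justification---that every $d'$ in the cone satisfies $\nabla g_i(\bar{x})^\top d'\le 0$---merely restates the definition of the linearization cone and does not yield the inclusion you need.

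This matters because the constant rank supplied by MPSC-PCRSC is only for the family whose $g$-indices range over $\mathcal{I}_{0}$, so your map $\sigma=(g_{\mathcal{I}_{0}},G_{\mathcal{I}_G\cup\beta_1},H_{\mathcal{I}_H\cup\beta_2},h)$ only forces $g_i\circ\xi\equiv 0$ for $i\in\mathcal{I}_{0}$. With the corrected inclusion $\mathcal{I}_{0}\subseteq\mathcal{I}$ this leaves the indices $i\in\mathcal{I}\setminus\mathcal{I}_{0}$ uncontrolled: condition \eqref{ssocq3} requires $g_i\circ\xi\equiv 0$ for all of $\mathcal{I}$, and even feasibility \eqref{ssocq1} is in doubt for these indices, since $\nabla g_i(\bar{x})^\top d=0$ rules out the first-order strict-descent argument you correctly use for $i\in\mathcal{I}_g\setminus\mathcal{I}$, and nothing prevents $g_i(\xi(t))$ from becoming positive at second order. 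In fairness, the paper's own proof is terse at exactly this spot (it simply invokes ``the similar argument as that of Lemma \ref{th4.2}'' with $\beta_1,\beta_2$ in place of $\mathcal{J},\mathcal{K}$), but your write-up makes the false inclusion explicit and builds both \eqref{ssocq3} and \eqref{ssocq1} on it, so as it stands the proof does not close.
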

	
\begin{proof}
It follows from Lemma \ref{MPSC:P:CRSC-ACQ} that the  MPSC-ACQ holds at $\bar{x}$.
	For any $d\in \mathcal{L}^{MPSC}_\mathcal{F}(\bar{x}) $, using Lemma \ref{pro2.2} yields that there exists a disjoint bipartition $(\beta_1,\beta_2)\in\mathcal{P}(\mathcal{I}_{GH})$ such that $d\in\mathcal{L}_{\mathcal{F}_{(\beta_1,\beta_2)}}(\bar{x})$ and so, $\mathcal{T}_{\mathcal{F}_{(\beta_1,\beta_2)}}(\bar{x})=\mathcal{L}_{\mathcal{F}_{(\beta_1,\beta_2)}}(\bar{x})\ne \emptyset$, which implies $\bar{x}\in\mathcal{F}_{(\beta_1,\beta_2)}$.
	Note that $\beta_1\subseteq\mathcal{J}:=\{k\in \mathcal{I}_{GH}\,:\,\nabla G_k(\bar{x})^\top d=0\}$, $\beta_2\subseteq \mathcal{K}:=\{k\in\mathcal{I}_{GH}\,:\, \nabla H_k(\bar{x})^\top d=0\}$ and $\mathcal{J} \cup \mathcal{K}= \mathcal{I}_{GH}$. Consequently, one can obtain that the MPSC-SSOCQ holds at $\bar{x}$ by the similar argument as that of Lemma \ref{th4.2} by substituting $\beta_1$, $\beta_2$ for $\mathcal{J}$ and $\mathcal{K}$ respectively.
\end{proof}

\begin{lemma}\label{th4.4}
 Assume that the MPSC-WCR holds at $\bar{x}\in \mathcal{F}$.
 Then the MPSC-WSOCQ holds at $\bar{x}$.
\end{lemma}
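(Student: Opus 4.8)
The plan is to mimic the proof of Lemma \ref{th4.2}, but now invoking the constant-rank hypothesis only for the fixed partition induced by the critical-subspace vector, and with the crucial simplification that for $\tilde d \in \tilde{\mathcal{C}}^{MPSC}_\mathcal{F}(\bar x)$ the active inequality gradients are all annihilated, so the set $\mathcal{I}$ of Definition \ref{socq} is simply $\mathcal{I}_g$. First I would fix an arbitrary nonzero $\tilde d \in \tilde{\mathcal{C}}^{MPSC}_\mathcal{F}(\bar x)$ and define, as in Lemma \ref{th4.2}, the index sets $\mathcal{J}:=\{k\in\mathcal{I}_{GH}:\nabla G_k(\bar x)^\top\tilde d=0\}$ and $\mathcal{K}:=\{k\in\mathcal{I}_{GH}:\nabla H_k(\bar x)^\top\tilde d=0\}$. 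By the definition \eqref{cspace1} of the MPSC critical subspace, $\nabla G_k(\bar x)^\top\tilde d=0$ and $\nabla H_k(\bar x)^\top\tilde d=0$ for every $k\in\mathcal{I}_{GH}$, hence $\mathcal{J}=\mathcal{K}=\mathcal{I}_{GH}$; in particular $\mathcal{J}\cup\mathcal{K}=\mathcal{I}_{GH}$, which is what one needs for feasibility of the switching constraints.

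Next I would assemble the mapping
\begin{equation*}
  \sigma(x):=\bigl(g_{\mathcal{I}_g}(x),\,G_{\mathcal{I}_G\cup\mathcal{I}_{GH}}(x),\,H_{\mathcal{I}_H\cup\mathcal{I}_{GH}}(x),\,h(x)\bigr),
\end{equation*}
so that, again by \eqref{cspace1}, $\tilde d\in\ker\nabla\sigma(\bar x)$. The MPSC-WCR hypothesis is exactly the statement that the family of gradients \eqref{wcr}—equivalently the Jacobian $\nabla\sigma$—has constant rank on a ball $\mathbb{B}_\varepsilon(\bar x)$. I would then apply Lemma \ref{lem4.1} to $\sigma$ to get neighbourhoods $U,V$ and a $C^2$ map $\phi:U\to V$ with $\phi(\bar x)=\bar x$, $\nabla\phi(\bar x)=I_n$, and $\sigma(\phi(\bar x+d))=\sigma(\bar x)$ for all $d\in\ker\nabla\sigma(\bar x)$ with $\bar x+d\in U$. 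Pick $\delta>0$ with $\bar x+t\tilde d\in U$ for $t\in[0,\delta)$ and set $\zeta(t):=\phi(\bar x+t\tilde d)$. Then $\zeta(0)=\bar x$, $\zeta'_+(0)=\tilde d$ gives \eqref{wsocq2}, and since $t\tilde d\in\ker\nabla\sigma(\bar x)$ for every $t\in[0,\delta)$ we get $\sigma(\zeta(t))=\sigma(\bar x)=0$, which unpacks to \eqref{wsocq3}, \eqref{wsocq4} (with $\mathcal{J}=\mathcal{I}_{GH}$) and \eqref{wsocq5} (with $\mathcal{K}=\mathcal{I}_{GH}$).

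Finally I would verify the feasibility \eqref{wsocq1} of the arc $\zeta$ for $t$ small. The switching constraints $G_k(\zeta(t))H_k(\zeta(t))\equiv0$ hold for every $k\in\{1,\dots,l\}$ because $k\in\mathcal{I}_G\cup\mathcal{I}_{GH}$ or $k\in\mathcal{I}_H\cup\mathcal{I}_{GH}$ and the corresponding component of $\sigma$ vanishes along $\zeta$; the equalities $h_j(\zeta(t))\equiv0$ are part of $\sigma(\zeta(t))=0$; and for $i\notin\mathcal{I}_g$ one uses $g_i(\bar x)<0$ together with continuity of $g_i$ and $\zeta$ to keep $g_i(\zeta(t))<0$ for $\delta$ small. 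Note there is no analogue here of case (iii) in Lemma \ref{th4.2}, since every active index is in $\mathcal{I}_g$ and handled by $\sigma$. Gathering these facts shows $\zeta(t)\in\mathcal F$ for all $t\in[0,\delta)$, so the MPSC-WSOCQ holds at $\bar x$. The only subtlety—and the one point worth stating carefully—is the identification $\mathcal{J}=\mathcal{K}=\mathcal{I}_{GH}$, which is what makes the single constant-rank block \eqref{wcr} suffice; with a general critical vector (as in MPSC-SSOCQ) one would instead need the piecewise hypothesis, which is why the companion statement for MPSC-PWCR is treated separately.
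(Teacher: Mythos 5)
Your proposal is correct and follows essentially the same route as the paper: the same mapping $\sigma=(g_{\mathcal{I}_g},G_{\mathcal{I}_G\cup\mathcal{I}_{GH}},H_{\mathcal{I}_H\cup\mathcal{I}_{GH}},h)$, the same appeal to Lemma \ref{lem4.1} under the constant-rank hypothesis of \eqref{wcr}, the same arc $\zeta(t)=\phi(\bar x+t\tilde d)$, and the same feasibility check reducing to $g_i(\zeta(t))<0$ for $i\notin\mathcal{I}_g$ by continuity. Your explicit observation that $\mathcal{J}=\mathcal{K}=\mathcal{I}_{GH}$ for critical-subspace directions is exactly the simplification the paper leaves implicit when it says ``by the similar argument as that of Lemma \ref{th4.2}.''
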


\begin{proof}
For any $\tilde{d}\in\tilde{\mathcal{C}}^{MPSC}_\mathcal{F}(\bar{x})$, we define $\sigma :\mathbb{R}^n\rightarrow\mathbb{R}^{|\mathcal{I}_g|+|\mathcal{I}_G\cup\mathcal{I}_{GH}|+|\mathcal{I}_H\cup\mathcal{I}_{GH}|+p}$ by
\begin{equation*}
  \sigma(x):=(g_{\mathcal{I}_g}(x),G_{\mathcal{I}_G\cup\mathcal{I}_{GH}}(x),H_{\mathcal{I}_H\cup\mathcal{I}_{GH}}(x),h(x)).
\end{equation*}
Note that $\tilde{\mathcal{C}}^{MPSC}_\mathcal{F}(\bar{x})$ coincides the nullspace of matrix $\nabla \sigma(\bar{x})$, and the MPSC-WCR implies that the rank of $\nabla \sigma$ is a constant in a neighbourhood of $\bar{x}$. It follows from Lemma \ref{lem4.1} that there exist neighbourhoods $U$, $V$ and a twice continuously differentiable mapping $\phi$: $U\rightarrow V$ such that $\phi(\bar{x})=\bar{x}$, $\nabla \phi(\bar{x})$ is an $n$-order identity matrix and $\sigma(\phi(\bar{x}+\tilde{d}))=\sigma(\bar{x})$ for all $\tilde{d}\in \tilde{\mathcal{C}}^{MPSC}_\mathcal{F}(\bar{x})$ satisfying $\bar{x}+\tilde{d}\in U$. Meanwhile, there exists $\delta>0$ such that $\bar{x}+t\tilde{d}\in U$ for all $t\in [0,\delta)$ and define a function $\zeta$: $[0,\delta)\rightarrow\mathbb{R}^n$ by $\zeta(t):=\phi(\bar{x}+t\tilde{d})$. By the similar argument as that of Lemma \ref{th4.2}, we conclude that \eqref{wsocq2} holds, and
\begin{equation*}
  \sigma(\zeta(t))=(g_{\mathcal{I}_g}(\zeta(t)),G_{\mathcal{I}_G\cup\mathcal{I}_{GH}}(\zeta(t)),H_{\mathcal{I}_H\cup\mathcal{I}_{GH}}(\zeta(t)),h(\zeta(t)))\equiv 0,
\end{equation*}
for all $0\leq t<\delta$, which shows that \eqref{wsocq3}-\eqref{wsocq5} hold. In order to prove the arc $\zeta(t)\in\mathcal{F}$ for all $0\leq t<\delta$, it suffices to show $g_i(\zeta(t))<0$ for all $i\notin \mathcal{I}_g$, which can be deduced by the continuity of $g_i$ and $\zeta$ immediately. Consequently, the MPSC-WSOCQ holds at $\bar{x}$. The proof is completed.
\end{proof}

%
%
%
%
%
%
%
%

We next give an example to show that
that Lemma \ref{th4.4} is true and that the MPSC-WCR and MPSC-WSOCQ do not imply the MPSC-ACQ.

\begin{example}
 Consider the problem \eqref{mpsc1} with $f(x):= -x_1+x_3^2, g(x):=-x_1, G(x):=x_1^2$ and $H(x):= x_2-x_1^2$, where $x=(x_1,x_2,x_{3})^\top\in \mathbb{R}^{3}$. Then the feasible set
  $$\mathcal{F}= (\{0\} \times\mathbb{R}\times\mathbb{R})  \cup ( \{ (x_1,x_2)\,:\, x_{2}=x_{1}^{2},\,  x_{1}>0 \} \times\mathbb{R}).$$
Let $\bar{x}=(0,0,0)^{\top}$. After verification,  $\tilde{\mathcal{C}}^{MPSC}_\mathcal{F}(\bar{x})= \{0\}\times \{0\} \times \mathbb{R}$, and the rank of the family of gradients
\begin{equation*}
  \nabla g(x)=(-1,0,0)^\top,\nabla G(x)=(2x_1,0,0)^\top,\nabla H(x)=(-2x_1,1,0)^\top
\end{equation*}
is equal to $2$ around $\bar{x}$. So, the MPSC-WCR holds at $\bar{x}$.  For any $\tilde d=(0,0,\tau)^\top\in \tilde{\mathcal{C}}^{MPSC}_\mathcal{F}(\bar{x})$, set $\zeta(t)=(0,0,\tau t)^\top$ for $0\leq t<1$, it is easy to check that \eqref{wsocq1}-\eqref{wsocq5} are satisfied, and so, the MPSC-WSOCQ holds at $\bar{x}$. However, the MPSC-ACQ is violated at $\bar{x}$. In fact, one can compute that $\mathcal{T}_{\mathcal{F}}(\bar{x})=  \left(\{ 0\}\times\mathbb{R}\times\mathbb{R} \right) \cup \left(\mathbb{R}_{+}\times  \{ 0\}\times\mathbb{R} \right)$ and $\mathcal{L}^{MPSC}_\mathcal{F}(\bar{x})= \mathbb{R}_{+}\times \mathbb{R}\times\mathbb{R}$. So, $\mathcal{T}_{\mathcal{F}}(\bar{x})\neq \mathcal{L}^{MPSC}_\mathcal{F}(\bar{x})$, which implies that the MPSC-ACQ does not hold at $\bar{x}$. Therefore, the MPSC-WCR and MPSC-WSOCQ do not imply the MPSC-ACQ.
\end{example}

The following example also shows that the MPSC-ACQ does not imply the MPSC-WCR.

\begin{example}
 Consider the problem \eqref{mpsc1} with  $f(x):=x_1x_2, g(x):=-x_3^2, G(x):= x_1-x_2$ and $H(x):= x_1+x_2$, where $x=(x_1,x_2,x_{3})^\top\in \mathbb{R}^{3}$.
Let $\bar{x}=(0,0,0)^\top$. It is easy to verify that the MPSC-WCR does not hold at $\bar{x}$ due to the rank of the family of gradients
\begin{equation*}
  \nabla g(x)=(0,0,-2x_3)^\top,\, \nabla G(x)=(1,-1,0)^\top,\,\nabla H(x)=(1,1,0)^\top
\end{equation*}
being not constant around $\bar{x}$. However, the MPSC-ACQ is satisfied at $\bar{x}$. After calculation, the feasible set  $\mathcal{F}=\left\{(x_1,x_2,x_3)^\top\,:\, |x_1|= |x_2|,\, x_3\in\mathbb{R} \right\}$, and
$$\mathcal{L}^{MPSC}_\mathcal{F}(\bar{x})=\left\{ (d_1,d_2,d_3)^\top\,:\, |d_1|=|d_2|,\, d_3\in\mathbb{R} \right\}=\mathcal{T}_{\mathcal{F}}(\bar{x}).$$
Therefore the MPSC-ACQ is not stronger than the MPSC-WCR.
\end{example}

\begin{lemma}\label{PWCR-WSOCQ}
 Assume that the MPSC-PWCR holds at $\bar{x}\in \mathcal{F}$.
 Then the MPSC-WSOCQ holds at $\bar{x}$.
\end{lemma}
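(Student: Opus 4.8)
The plan is to mimic the argument of Lemma \ref{th4.4} but to work inside a single well-chosen branch problem rather than with the global index sets, exactly as Lemma \ref{le:CRSC-SSOCQ} adapted the proof of Lemma \ref{th4.2}. First I would fix an arbitrary nonzero $\tilde{d}\in\tilde{\mathcal{C}}^{MPSC}_\mathcal{F}(\bar{x})$. Since $\tilde{\mathcal{C}}^{MPSC}_\mathcal{F}(\bar{x})\subseteq\mathcal{L}^{MPSC}_\mathcal{F}(\bar{x})$, Lemma \ref{pro2.2} gives a bipartition $(\beta_1,\beta_2)\in\mathcal{P}(\mathcal{I}_{GH})$ with $\tilde{d}\in\mathcal{L}_{\mathcal{F}_{(\beta_1,\beta_2)}}(\bar{x})$; in fact, because $\tilde{d}$ annihilates \emph{every} $\nabla G_k(\bar{x})$ and $\nabla H_k(\bar{x})$ for $k\in\mathcal{I}_{GH}$, any bipartition works, so I may as well take the one furnished by the lemma. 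Set $\mathcal{J}:=\beta_1$ and $\mathcal{K}:=\beta_2$; these are subsets of $\mathcal{I}_{GH}$ with $\mathcal{J}\cup\mathcal{K}=\mathcal{I}_{GH}$, which is what the definition of MPSC-WSOCQ requires.

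Next I would invoke the MPSC-PWCR hypothesis for this bipartition: the family of gradients in \eqref{pwcr}, i.e. $\{\nabla g_i(x)\}_{i\in\mathcal{I}_g}\cup\{\nabla h_j(x)\}_{j\in\mathcal{I}_h}\cup\{\nabla G_k(x)\}_{k\in\mathcal{I}_G\cup\beta_1}\cup\{\nabla H_k(x)\}_{k\in\mathcal{I}_H\cup\beta_2}$, has locally constant rank near $\bar{x}$. Define
\[
\sigma(x):=\bigl(g_{\mathcal{I}_g}(x),\,h(x),\,G_{\mathcal{I}_G\cup\beta_1}(x),\,H_{\mathcal{I}_H\cup\beta_2}(x)\bigr),
\]
so that $\nabla\sigma$ has constant rank in a neighbourhood of $\bar{x}$ and $\tilde{d}\in\ker\nabla\sigma(\bar{x})$ (the latter holds because $\tilde{d}\in\tilde{\mathcal{C}}^{MPSC}_\mathcal{F}(\bar{x})$ kills all $g_i$, $h_j$, all $G_k$ for $k\in\mathcal{I}_G\cup\mathcal{I}_{GH}\supseteq\mathcal{I}_G\cup\beta_1$, and all $H_k$ for $k\in\mathcal{I}_H\cup\mathcal{I}_{GH}\supseteq\mathcal{I}_H\cup\beta_2$). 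Apply Lemma \ref{lem4.1} to get neighbourhoods $U,V$ and a twice continuously differentiable $\phi:U\to V$ with $\phi(\bar{x})=\bar{x}$, $\nabla\phi(\bar{x})=I_n$, and $\sigma(\phi(\bar{x}+d))=\sigma(\bar{x})$ for $d\in\ker\nabla\sigma(\bar{x})$ with $\bar{x}+d\in U$. Choose $\delta>0$ small enough that $\bar{x}+t\tilde{d}\in U$ for all $t\in[0,\delta)$, and put $\zeta(t):=\phi(\bar{x}+t\tilde{d})$. Then $\zeta(0)=\bar{x}$ and $\zeta'_+(0)=\nabla\phi(\bar{x})\tilde{d}=\tilde{d}$, giving \eqref{wsocq2}, while $\sigma(\zeta(t))=\sigma(\bar{x})=0$ for all $t\in[0,\delta)$ gives \eqref{wsocq3}, \eqref{wsocq4} and \eqref{wsocq5}.

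Finally I would check feasibility \eqref{wsocq1}. For the switching constraints, $\mathcal{J}\cup\mathcal{K}=\mathcal{I}_{GH}$ and the disjoint partition $\{\mathcal{I}_G,\mathcal{I}_H,\mathcal{I}_{GH}\}$ of $\{1,\dots,l\}$ together imply that for each $k$ either $G_k\circ\zeta\equiv0$ on $[0,\delta)$ or $H_k\circ\zeta\equiv0$ on $[0,\delta)$ (for $k\in\mathcal{I}_G$ use $G_k\circ\zeta\equiv0$; for $k\in\mathcal{I}_H$ use $H_k\circ\zeta\equiv0$; for $k\in\mathcal{I}_{GH}=\mathcal{J}\cup\mathcal{K}$ use whichever of $\mathcal{J}$, $\mathcal{K}$ contains $k$), hence $G_k(\zeta(t))H_k(\zeta(t))\equiv0$. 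For the inequality constraints with $i\notin\mathcal{I}_g$ we have $g_i(\zeta(0))=g_i(\bar{x})<0$, so by continuity of $g_i\circ\zeta$ we may shrink $\delta$ so that $g_i(\zeta(t))<0$ on $[0,\delta)$; for $i\in\mathcal{I}_g$ we already have $g_i(\zeta(t))\equiv0$ from \eqref{wsocq3}. The equality constraints $h_j$ hold by \eqref{wsocq3}. Thus $\zeta(t)\in\mathcal{F}$ for all $t\in[0,\delta)$, and MPSC-WSOCQ holds at $\bar{x}$. The only mildly delicate point is bookkeeping the inclusions $\mathcal{I}_G\cup\beta_1\subseteq\mathcal{I}_G\cup\mathcal{I}_{GH}$ and $\mathcal{I}_H\cup\beta_2\subseteq\mathcal{I}_H\cup\mathcal{I}_{GH}$ to see that $\tilde{d}\in\ker\nabla\sigma(\bar{x})$ and that the partition-based feasibility argument closes; there is no real analytic obstacle since Lemma \ref{lem4.1} does all the heavy lifting. \qed
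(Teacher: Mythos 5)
Your proposal is correct and follows essentially the same route as the paper: fix a bipartition $(\beta_1,\beta_2)$, observe that $\tilde{d}$ lies in the kernel of the Jacobian of the branch map $\sigma=(g_{\mathcal{I}_g},h,G_{\mathcal{I}_G\cup\beta_1},H_{\mathcal{I}_H\cup\beta_2})$, invoke the constant-rank Lemma \ref{lem4.1} supplied by MPSC-PWCR to build the arc $\zeta(t)=\phi(\bar{x}+t\tilde{d})$, and check feasibility. The paper's proof is terser (it delegates the arc construction and feasibility check to "modifying the argument of Lemma \ref{th4.4}"), but the substance is identical.
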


\begin{proof}
According to $\bar{x}\in \mathcal{F}=\bigcup_{(\beta_1,\beta_2)\in \mathcal{P}(\mathcal{I}_{GH})} \mathcal{F}_{(\beta_1,\beta_2)}$, we obtain that there exists a disjoint bipartition $(\beta_1,\beta_2)\in \mathcal{P}(\mathcal{I}_{GH})$ such that  $\bar{x}\in \mathcal{F}_{(\beta_1,\beta_2)}$. For any given nonzero $\tilde{d}\in \tilde{\mathcal{C}}^{MPSC}_\mathcal{F}(\bar{x})$, we have $\tilde{d}\in\tilde{\mathcal{C}}^{MPSC}_{\mathcal{F}_{(\beta_1,\beta_2)}}(\bar{x})$	because $\nabla G_k(\bar{x})^\top\tilde{d}=\nabla H_k(\bar{x})^\top\tilde{d}=0$ for all $k\in \mathcal{I}_{GH}$. Since the MPSC-PWCR holds at $\bar{x}$, the WCR holds at $\bar{x}$ for problem \eqref{mpsc3} with the $(\beta_1,\beta_2)$. We define a function $\sigma :\mathbb{R}^n\rightarrow\mathbb{R}^{|\mathcal{I}_g|+|\mathcal{I}_G\cup\beta_1|+|\mathcal{I}_H\cup\beta_2|+p}$ by
\begin{equation*}
  \sigma(x):=(g_{\mathcal{I}_g}(x),G_{\mathcal{I}_G\cup\beta_1}(x),H_{\mathcal{I}_H\cup\beta_2}(x),h(x)).
\end{equation*}
	Note that $\tilde{\mathcal{C}}^{MPSC}_{\mathcal{F}_{(\beta_1,\beta_2)}}(\bar{x})$ is the nullspace of matrix $\nabla \sigma(\bar{x})$. Hence one can conclude that the MPSC-WSOCQ holds at $\bar{x}$ by modifying the argument of Theorem \ref{th4.4}. The proof is completed.
\end{proof}

\section{Relations between Mordukhovich stationary point and strong stationary point of (MPSC)}\label{sec3}

In this section, we mainly study the relations between Mordukhovich stationary point and strong stationary point of \eqref{mpsc1}. In particular, the conditions for  the Mordukhovich stationary point of \eqref{mpsc1} to be strong stationary point are presented under some suitable conditions.

 We first recall the definition of Mordukhovich stationary point and strong stationary point of \eqref{mpsc1}.

\begin{definition}{\rm \cite[Definition 4.1]{Mehlitz2020}}\label{def3.1}
Let $\bar{x}\in \mathcal{F}$ be a feasible point of \eqref{mpsc1}.  $\bar{x}$ is said to be
\begin{enumerate}
  \item[{\rm (i)}]
  Mordukhovich ($M$-) stationary to \eqref{mpsc1} iff, there exists $(\lambda,\rho,\mu,\nu)\in \mathbb{R}^m_{+}\times\mathbb{R}^p\times\mathbb{R}^l\times\mathbb{R}^l$ such that
  \begin{equation}\label{ms}
    \begin{aligned}
    \left\{\begin{array}{ll}
    \nabla_x\ell(\bar{x},\lambda,\rho,\mu,\nu)=0,  \\
     \lambda^\top g(\bar{x})=0, \\
    \mu_k=0, \,\, k\in \mathcal{I}_H, \\
    \nu_k=0, \,\, k\in \mathcal{I}_G,\\
    \mu_k\nu_k=0, \,\, k\in \mathcal{I}_{GH}.
    \end{array}\right.
    \end{aligned}
  \end{equation}

  \item[{\rm (ii)}]
   strong ($S$-) stationary to \eqref{mpsc1} iff, there exists $(\lambda,\rho,\mu,\nu)\in \mathbb{R}^m_{+}\times\mathbb{R}^p\times\mathbb{R}^l\times\mathbb{R}^l$ such that
  \begin{equation}\label{ss}
  \begin{aligned}
    \left\{\begin{array}{ll}
    \nabla_x\ell(\bar{x},\lambda,\rho,\mu,\nu)=0,  \\
     \lambda^\top g(\bar{x})=0, \\
    \mu_k=0, \,\, k\in \mathcal{I}_H, \\
    \nu_k=0, \,\, k\in \mathcal{I}_G,\\
    \mu_k=\nu_k=0, \,\, k\in \mathcal{I}_{GH}.
    \end{array}\right.
    \end{aligned}
  \end{equation}
\end{enumerate}
\end{definition}

\begin{remark}
\begin{enumerate}
  \item[{\rm (i)}]
  It follows from Definition \ref{def3.1} that the following relations hold:
$$\textrm{$S$-stationary}\Longrightarrow \textrm{$M$-stationary}.$$
 The $M$-stationary point of \eqref{mpsc1} is generally not $S$-stationary  point; see Example \ref{ex3.1}.

\item[{\rm (ii)}] The $M$-stationary point and $S$-stationary point of \eqref{mpsc1} are equivalently characterized by the Mordukhovich normal cone and Fr\'echet normal cone, respectively; see \cite[Remark 3.1]{Li2023}.
  \end{enumerate}
\end{remark}

\begin{lemma}{\rm \cite[Remark 3.1]{Li2023}} \label{pro3.1}
Let $\bar{x}\in \mathcal{F}$ be a feasible point of \eqref{mpsc1}, then the following relations are valid
\begin{enumerate}
  \item[{\rm (i)}]
   $\bar{x}$ is $M$-stationary to \eqref{mpsc1} if and only if $0\in \nabla f(\bar{x})+\nabla F(\bar{x})\mathcal{N}_\mathcal{D}(F(\bar{x}))$.
  \item[{\rm (ii)}]
   $\bar{x}$ is $S$-stationary to \eqref{mpsc1} if and only if $0\in \nabla f(\bar{x})+\nabla F(\bar{x})\widehat{\mathcal{N}}_\mathcal{D}(F(\bar{x}))$.
\end{enumerate}
\end{lemma}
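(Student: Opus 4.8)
The plan is to obtain both equivalences from the product structure of $\mathcal D$ combined with the explicit normal cone formulas of Lemma \ref{pro2.1}, avoiding any variational analysis beyond the standard product rule for normal cones. First I would recall that $F(\bar x)=\bigl(g(\bar x),\,h(\bar x),\,(G_k(\bar x),H_k(\bar x))_{k=1}^l\bigr)$ while $\mathcal D=(-\infty,0]^m\times\{0\}^p\times\mathcal S^l$, and invoke the decomposition rule for the normal cones of a Cartesian product of closed sets (see e.g. \cite[Proposition 6.41]{Rockafellar1998}), which gives
\begin{equation*}
\mathcal N_{\mathcal D}(F(\bar x))=\prod_{i=1}^m\mathcal N_{(-\infty,0]}(g_i(\bar x))\ \times\ \prod_{j=1}^p\mathcal N_{\{0\}}(0)\ \times\ \prod_{k=1}^l\mathcal N_{\mathcal S}\bigl(G_k(\bar x),H_k(\bar x)\bigr),
\end{equation*}
together with the identical identity in which $\mathcal N$ is replaced throughout by $\widehat{\mathcal N}$; the product rule for the Fr\'echet normal cone is valid here because each factor is closed.

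Next I would evaluate the three types of factors. Since $(-\infty,0]$ is convex, $\mathcal N_{(-\infty,0]}(a)=\widehat{\mathcal N}_{(-\infty,0]}(a)$ equals $\{0\}$ if $a<0$ and $[0,+\infty)$ if $a=0$; likewise $\mathcal N_{\{0\}}(0)=\widehat{\mathcal N}_{\{0\}}(0)=\mathbb R$; and $\mathcal N_{\mathcal S}$, $\widehat{\mathcal N}_{\mathcal S}$ are supplied by Lemma \ref{pro2.1}. Hence a vector $w=(\lambda,\rho,(\mu_k,\nu_k)_{k=1}^l)$ belongs to $\mathcal N_{\mathcal D}(F(\bar x))$ if and only if $\lambda\in\mathbb R^m_+$ with $\lambda_i=0$ whenever $g_i(\bar x)<0$ (equivalently $\lambda\ge0$ and $\lambda^\top g(\bar x)=0$), $\rho$ is unrestricted, and $(\mu_k,\nu_k)\in\mathcal N_{\mathcal S}(G_k(\bar x),H_k(\bar x))$ for each $k$. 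Splitting this last membership according to whether $k\in\mathcal I_G$ (so $G_k(\bar x)=0$, $H_k(\bar x)\neq0$, giving $\mathcal N_{\mathcal S}=\mathbb R\times\{0\}$, i.e. $\nu_k=0$), $k\in\mathcal I_H$ (so $\mathcal N_{\mathcal S}=\{0\}\times\mathbb R$, i.e. $\mu_k=0$), or $k\in\mathcal I_{GH}$ (so $\mathcal N_{\mathcal S}=\mathcal S$, i.e. $\mu_k\nu_k=0$) reproduces verbatim the index conditions in \eqref{ms}; the same case analysis with $\widehat{\mathcal N}_{\mathcal S}$ gives $\nu_k=0$ on $\mathcal I_G$, $\mu_k=0$ on $\mathcal I_H$, and $\mu_k=\nu_k=0$ on $\mathcal I_{GH}$, which are the conditions in \eqref{ss}.

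Finally, writing $w=(\lambda,\rho,\mu_1,\nu_1,\dots,\mu_l,\nu_l)$ and noting that $\nabla f(\bar x)+\nabla F(\bar x)\,w$ is by construction $\nabla f(\bar x)+\sum_i\lambda_i\nabla g_i(\bar x)+\sum_j\rho_j\nabla h_j(\bar x)+\sum_k\mu_k\nabla G_k(\bar x)+\sum_k\nu_k\nabla H_k(\bar x)=\nabla_x\ell(\bar x,\lambda,\rho,\mu,\nu)$, the inclusion $0\in\nabla f(\bar x)+\nabla F(\bar x)\mathcal N_{\mathcal D}(F(\bar x))$ holds exactly when some multiplier $w$ satisfying the sign and complementarity restrictions above also satisfies $\nabla_x\ell(\bar x,\lambda,\rho,\mu,\nu)=0$ — which is precisely Definition \ref{def3.1}(i); replacing $\mathcal N$ by $\widehat{\mathcal N}$ yields (ii) from Definition \ref{def3.1}(ii). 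I do not expect a genuine obstacle here: the only points requiring care are the correct invocation of the product rule for the Fr\'echet normal cone of the nonconvex factor $\mathcal S$, and the bookkeeping that matches each pair $(\mu_k,\nu_k)$ to the corresponding $(G_k,H_k)$ block of $\psi$ so that the decomposition aligns with the Lagrangian stationarity equation.
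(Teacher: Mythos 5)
Your proof is correct. The paper itself offers no proof of this lemma --- it is quoted directly from \cite[Remark 3.1]{Li2023} --- so there is nothing internal to compare against; your derivation via the exact product rule for regular and limiting normal cones of $\mathcal{D}=(-\infty,0]^m\times\{0\}^p\times\mathcal{S}^l$, combined with the formulas of Lemma \ref{pro2.1} and the identification of $\nabla F(\bar{x})w$ with $\nabla_x\ell(\bar{x},\lambda,\rho,\mu,\nu)-\nabla f(\bar{x})$, is the standard argument one would expect the cited reference to use, and every step (including the equivalence of $\lambda\ge 0$, $\lambda_i=0$ for $g_i(\bar{x})<0$ with $\lambda\ge 0$, $\lambda^\top g(\bar{x})=0$) checks out.
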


The following example shows that the $M$-stationary point of \eqref{mpsc1} may be not $S$-stationary  point.

\begin{example}\label{ex3.1}
Consider the problem \eqref{mpsc1} with $f(x):= x_1-3x_2, g(x):=x_2-x_1, G(x):=x_1, H(x):= x_2$ and $\mathcal{D}:=(-\infty,0]\times \mathcal{S}$, where $x=(x_1,x_2)^\top\in \mathbb{R}^{2}$.
Then $\nabla f(x)= (1,-3)$ and $F(x)=(x_2-x_1,x_1,x_2)^\top$.
It  is easy to verify that $\bar{x}=(0,0)^\top$ is an $M$-stationary point of \eqref{mpsc1}, but $\bar{x}$ is not $S$-stationary point. As a matter of fact, $ g(\bar{x})=0$, and for any $(\lambda,\mu,\nu)\in \mathbb{R}_+\times \mathbb{R}\times \mathbb{R}$ satisfying
\begin{equation*}
  \nabla f(\bar{x})+\lambda \nabla g(\bar{x})+\mu \nabla G(\bar{x})+\nu \nabla H(\bar{x})=0,
\end{equation*}
we have
\begin{equation*}
  1-\lambda+\mu = 0, \,\,
  -3+\lambda+\nu = 0,
\end{equation*}
which implies $\mu+\nu=2$. So, there exists $(\lambda,0,2) \,(\mbox{or},  (\lambda,2,0) ) \in \mathbb{R}_{+} \times \mathbb{R}\times \mathbb{R}$ such that  \eqref{ms} holds.  However, there does not exist $\mu=\nu=0$ and $\lambda \in \mathbb{R}_{+}$ satisfying \eqref{ss}.

On the other hand, it follows from Lemma \ref{pro2.1} that $  \nabla F(\bar{x})\mathcal{N}_\mathcal{D}(F(\bar{x}))=\mathbb{R}^2$ and
\begin{equation*}
  \nabla F(\bar{x}) \widehat{\mathcal{N}}_\mathcal{D}(F(\bar{x}))= \left\{(-t,t)^{\top}\,:\, t\in \mathbb{R}_{+} \right\}\neq (-\infty, 0] \times [0,+\infty).
\end{equation*}
Then
\begin{equation*}
 0\in\nabla f(\bar{x})+ \nabla F(\bar{x})\mathcal{N}_\mathcal{D}(F(\bar{x}))=\mathbb{R}^2,
\end{equation*}
and
\begin{equation*}
 0\notin \nabla f(\bar{x})+ \nabla F(\bar{x}) \widehat{\mathcal{N}}_\mathcal{D}(F(\bar{x}))=  \left\{(1-t, t-3)^{\top}\,:\, t\in \mathbb{R}_{+} \right\}.
\end{equation*}
It therefore implies that Lemma \ref{pro3.1} is true.
\end{example}

The example above shows that $S$-stationary  to \eqref{mpsc1} is strictly stronger than $M$-stationary in some cases. It follows from \cite[Lemma 4.2]{Mehlitz2020}  that for a feasible point $\bar{x}\in \mathcal{F}$ of \eqref{mpsc1}, it is an $M$-stationary point if and only if there is a partition $(\beta_1,\beta_2)\in\mathcal{P}(\mathcal{I}_{GH})$ such that $\bar{x}$ is a KKT point of problem \eqref{mpsc3}.

The next result shows that an $M$-stationary point of \eqref{mpsc1} is also $S$-stationary point  of \eqref{mpsc1}  under appropriate conditions.

\begin{theorem}
Let $\bar{x}\in \mathcal{F}$ be an $M$-stationary point of \eqref{mpsc1} with the associated multipliers $(\lambda,\rho,\mu,\nu)$ satisfying \eqref{ms}.  Define
\begin{align*}
  \beta_1^1:=\left\{k\in \mathcal{I}_{GH}:\mu_k=0\right\},\quad \beta_2^1:=\left\{k\in \mathcal{I}_{GH}:\mu_k\neq0\right\},\\
  \beta_1^2:=\left\{k\in \mathcal{I}_{GH}:\nu_k\neq0\right\},\quad \beta_2^2:=\left\{k\in \mathcal{I}_{GH}:\nu_k=0\right\}.
\end{align*}
Consider the following statements:
\begin{enumerate}
  \item [{\rm (i)}]
  $\bar{x}$ is $S$-stationary to \eqref{mpsc1}.
  \item [{\rm (ii)}]
  For any one of partitions $(\beta_1^i,\beta_2^i)\in\mathcal{P}(\mathcal{I}_{GH})$, $i\in\{1,2\}$, $\bar{x}$ is a KKT point of problem \eqref{mpsc3}.
\end{enumerate}
Then the implication {\rm (i)} $ \Rightarrow$ {\rm (ii)} always holds. Conversely, if the KKT multiplier associated with $\bar{x}$ is $(\lambda,\rho,\mu,\nu)$, that is, for partition $(\beta_1^i,\beta_2^i)\in\mathcal{P}(\mathcal{I}_{GH})$ and $(\lambda,\rho,\mu,\nu)\in \mathbb{R}^{|\mathcal{I}_g|}_+\times\mathbb{R}^p\times\mathbb{R}^{|\mathcal{I}_G\cup \beta_1^i|}\times\mathbb{R}^{|\mathcal{I}_H\cup \beta_2^i|}$, one has
\begin{equation}\label{kkt1}
  \nabla f(\bar{x})+\sum_{i\in \mathcal{I}_g}{\lambda_i\nabla g_{i}(\bar{x})}+\sum_{j=1}^p{\rho_j\nabla h_{j}(\bar{x})}+\sum_{k\in\mathcal{I}_G\cup \beta_1^i}{\mu_k\nabla G_{k}(\bar{x})}+\sum_{k\in\mathcal{I}_H\cup \beta_2^i}{\nu_k\nabla H_{k}(\bar{x})}=0.
\end{equation}
Then the reverse implication {\rm (ii)} $\Rightarrow$ {\rm (i)} holds.
\end{theorem}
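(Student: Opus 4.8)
The plan is to prove the two implications separately, using the characterizations of $M$- and $S$-stationarity in Lemma \ref{pro3.1} together with the branch-problem description of $M$-stationarity recalled just before the theorem (via \cite[Lemma 4.2]{Mehlitz2020}).

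For the implication (i) $\Rightarrow$ (ii): assume $\bar{x}$ is $S$-stationary, so there is $(\lambda,\rho,\mu,\nu)\in\mathbb{R}^m_+\times\mathbb{R}^p\times\mathbb{R}^l\times\mathbb{R}^l$ satisfying \eqref{ss}. In particular $\mu_k=\nu_k=0$ for all $k\in\mathcal{I}_{GH}$, hence in the notation of the theorem $\beta_2^1=\{k\in\mathcal{I}_{GH}:\mu_k\neq0\}=\emptyset$ and $\beta_1^1=\mathcal{I}_{GH}$, and likewise $\beta_1^2=\emptyset$, $\beta_2^2=\mathcal{I}_{GH}$. I would then observe that for \emph{each} of the two partitions $(\beta_1^i,\beta_2^i)$, the stationarity equation $\nabla_x\ell(\bar{x},\lambda,\rho,\mu,\nu)=0$ from \eqref{ss} is literally the KKT stationarity equation \eqref{kkt1} for the branch problem \eqref{mpsc3} associated with $(\beta_1^i,\beta_2^i)$: the terms indexed by $\mathcal{I}_{GH}$ all vanish, so it does not matter to which side of the bipartition they are assigned, and the sign condition $\lambda\in\mathbb{R}^m_+$ together with complementarity $\lambda^\top g(\bar{x})=0$ carries over verbatim. (One checks the remaining $G_k,H_k$ with $k\in\mathcal{I}_G\cup\mathcal{I}_H$ are unconstrained-sign equality multipliers in both problems.) Hence $\bar{x}$ is a KKT point of \eqref{mpsc3} for both $i\in\{1,2\}$, which is (ii).

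For the reverse implication (ii) $\Rightarrow$ (i) under the stated hypothesis: we are given that $\bar{x}$ is a KKT point of \eqref{mpsc3} for partition $(\beta_1^i,\beta_2^i)$ \emph{with the specific multiplier} $(\lambda,\rho,\mu,\nu)$ inherited from the $M$-stationarity data, i.e.\ \eqref{kkt1} holds. The idea is to compare \eqref{kkt1} with the $M$-stationarity equation $\nabla_x\ell(\bar{x},\lambda,\rho,\mu,\nu)=0$ from \eqref{ms}. Writing out the latter, the full sum over $k=1,\dots,l$ of $\mu_k\nabla G_k(\bar{x})+\nu_k\nabla H_k(\bar{x})$ splits as the part over $\mathcal{I}_G\cup\mathcal{I}_H$ plus the part over $\mathcal{I}_{GH}$; by \eqref{ms} we already have $\nu_k=0$ on $\mathcal{I}_G$ and $\mu_k=0$ on $\mathcal{I}_H$, and on $\mathcal{I}_{GH}$ we have $\mu_k\nu_k=0$. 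Now, say we work with $i=1$: then $\beta_1^1=\{k:\mu_k=0\}$, so the multiplier $\nu_k$ for $k\in\mathcal{I}_{GH}\cap\beta_1^1$ appears in \eqref{kkt1} on the $H$-side — but subtracting \eqref{kkt1} from \eqref{ms} forces $\sum_{k\in\beta_2^1}\mu_k\nabla G_k(\bar{x}) + \sum_{k\in\beta_1^1}\nu_k\nabla H_k(\bar{x}) = \sum_{k\in\beta_2^1}\mu_k\nabla G_k(\bar{x})$ on one side to match, and a careful index bookkeeping shows each surviving $\mu_k$ ($k\in\beta_2^1$, where $\nu_k=0$) and each surviving $\nu_k$ ($k\in\beta_1^1$, where $\mu_k=0$) must be zero because the \emph{same} coefficient vector $(\lambda,\rho,\mu,\nu)$ is used in both equations and the two sums are taken over different index blocks. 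Concretely, since \eqref{kkt1} and \eqref{ms} share the identical multiplier, their difference is the identity $\sum_{k\in\mathcal{I}_{GH}}\mu_k\nabla G_k(\bar{x}) + \sum_{k\in\mathcal{I}_{GH}}\nu_k\nabla H_k(\bar{x}) = \sum_{k\in\beta_1^i}(\cdot)\nabla G_k(\bar{x})+\sum_{k\in\beta_2^i}(\cdot)\nabla H_k(\bar{x})$ with the coefficients literally equal, which upon cancelling the common terms (those $k$ kept on the same side in both) leaves exactly the terms on $\mathcal{I}_{GH}$ that switch sides, and these are forced to have zero coefficient. Thus $\mu_k=\nu_k=0$ for all $k\in\mathcal{I}_{GH}$, which upgrades \eqref{ms} to \eqref{ss}, so $\bar{x}$ is $S$-stationary by Definition \ref{def3.1}(ii), proving (i).

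The main obstacle — and the step I would write out most carefully — is the index bookkeeping in (ii) $\Rightarrow$ (i): one must be precise about which $k\in\mathcal{I}_{GH}$ keep their role ($G$-side or $H$-side) in passing from the $M$-stationary description (where $k$ may sit on either side because $\mu_k\nu_k=0$) to the specific branch $(\beta_1^i,\beta_2^i)$, and argue that the remaining nonzero $\mu_k$ or $\nu_k$ cannot be reconciled between \eqref{ms} and \eqref{kkt1} when the \emph{same} multiplier vector is demanded, except by vanishing. It is worth noting that the hypothesis "the KKT multiplier is exactly $(\lambda,\rho,\mu,\nu)$" is essential here: without it, a branch problem could have a KKT point with a genuinely different multiplier, and $S$-stationarity could fail (as in Example \ref{ex3.1}). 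I would also remark that the two partitions in (ii) are, in general, needed simultaneously only for the forward direction; for the converse a single $i$ with the coincident multiplier suffices.
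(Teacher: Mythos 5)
Your forward direction (i) $\Rightarrow$ (ii) is essentially the paper's argument and is fine: the $S$-stationarity multiplier vanishes on $\mathcal{I}_{GH}$, so it serves as a KKT multiplier for every branch problem \eqref{mpsc3}, whatever the bipartition. One small caution: the sets $\beta_1^i,\beta_2^i$ are defined from the $M$-stationarity multiplier fixed in the hypothesis, not from the $S$-stationarity witness, so your claim that $\beta_2^1=\emptyset$ and $\beta_1^2=\emptyset$ does not follow; but you never actually need it, since your observation that the $\mathcal{I}_{GH}$-terms vanish and hence can be assigned to either side carries the argument.

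The reverse direction has a genuine gap. Subtracting \eqref{kkt1} from the stationarity equation in \eqref{ms} yields only the single vector identity
\begin{equation*}
\sum_{k\in\beta_2^1}\mu_k\nabla G_k(\bar{x})+\sum_{k\in\beta_1^1}\nu_k\nabla H_k(\bar{x})=0,
\end{equation*}
and from a vanishing linear combination you cannot conclude that each coefficient vanishes unless the gradients involved are linearly independent, which is nowhere assumed. In fact your target conclusion, that $\mu_k=\nu_k=0$ for all $k\in\mathcal{I}_{GH}$ so that \eqref{ms} itself upgrades to \eqref{ss}, is false in general: take $k\in\beta_2^1$ and $k'\in\beta_1^1$ with $\nabla G_k(\bar{x})=\nabla H_{k'}(\bar{x})\neq0$ and $\mu_k=-\nu_{k'}\neq0$; then both \eqref{ms} and \eqref{kkt1} hold with the same multiplier while $\mu_k\neq0$ on $\mathcal{I}_{GH}$. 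The point you are missing is that $S$-stationarity only requires the existence of \emph{some} multiplier satisfying \eqref{ss}, not that the given $M$-multiplier does. The paper's argument reads the needed vanishing directly off \eqref{kkt1}: for the partition $(\beta_1^1,\beta_2^1)$, every $\mathcal{I}_{GH}$-indexed multiplier that actually appears in \eqref{kkt1} is already zero --- $\mu_k=0$ for $k\in\beta_1^1$ by the very definition of $\beta_1^1$, and $\nu_k=0$ for $k\in\beta_2^1$ because there $\mu_k\neq0$ and \eqref{ms} gives $\mu_k\nu_k=0$. Hence \eqref{kkt1} collapses to a stationarity equation involving only the indices in $\mathcal{I}_G$ and $\mathcal{I}_H$, and defining $\bar\mu,\bar\nu$ to agree with $\mu,\nu$ on $\mathcal{I}_G,\mathcal{I}_H$ and to vanish elsewhere produces a multiplier satisfying \eqref{ss}. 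No comparison of the two stationarity equations, and no linear independence, is needed.
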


\begin{proof}
(i)$\Rightarrow$ (ii): Since $\bar{x}\in \mathcal{F}$ is an $S$-stationary point of \eqref{mpsc1}, it follows from Definition \ref{def3.1} that there exists $(\hat{\lambda},\hat{\rho},\hat{\mu},\hat{\nu})\in \mathbb{R}^m_{+}\times\mathbb{R}^p\times\mathbb{R}^l\times\mathbb{R}^l$ such that \eqref{ss} holds. Obviously, for any partition $(\beta_1,\beta_2)\in\mathcal{P}(\mathcal{I}_{GH})$, we have $\hat{\mu}_k=0$, $k\in \beta_2$ and $\hat{\nu}_k=0$, $k\in \beta_1$.
So, $\bar{x}$ is a KKT point of problem \eqref{mpsc3}.

(ii)$\Rightarrow$ (i): We only prove the case for partition $(\beta_1^1,\beta_2^1)\in\mathcal{P}(\mathcal{I}_{GH})$ since the proof of the case $(\beta_1^2,\beta_2^2)$ is similar. Since $\bar{x}\in \mathcal{F}$ is an $M$-stationary point of MPSC with multipliers $(\lambda,\rho,\mu,\nu)$, then \eqref{ms} holds. We deduce from \eqref{kkt1} that
\begin{align*}
   & \nabla f(\bar{x})+\sum_{i=1}^m{\lambda_i\nabla g_{i}(\bar{x})}+\sum_{j=1}^p{\rho_j\nabla h_{j}(\bar{x})}+\sum_{k\in\mathcal{I}_G}{\mu_k\nabla G_{k}(\bar{x})}+\sum_{k\in\mathcal{I}_H}{\nu_k\nabla H_{k}(\bar{x})}=0,  \\
   & \lambda\ge0, \quad \lambda^\top g(\bar{x})=0.
  \end{align*}
We now set
\begin{equation*}
 \begin{aligned}
  \bar{\mu}_k:=\left\{\begin{array}{ll}
                        \mu_k, & k\in\mathcal{I}_{G}, \\
                        0, & k\notin \mathcal{I}_{G},
                      \end{array}\right.
                      \quad \textrm{and}\quad
   \bar{\nu}_k:=\left\{\begin{array}{ll}
                        \nu_k, & k\in\mathcal{I}_{H}, \\
                        0, & k\notin \mathcal{I}_{H}.
                      \end{array}\right.
\end{aligned}
\end{equation*}
Therefore, the multiplier $(\lambda,\rho,\bar{\mu},\bar{\nu})$ satisfies \eqref{ss}, i.e., $\bar{x}$ is an $S$-stationary point of  \eqref{mpsc1}. The proof is completed.
\end{proof}

It is well-known that for standard nonlinear programming problems, the multipliers that satisfy KKT necessary conditions are uniquely determined when the LICQ holds. Fortunately, for \eqref{mpsc1}, the analogous conclusion \cite[Theorem 4.1]{Mehlitz2020} still holds if the MPSC-LICQ holds. More details about $S$-stationarity can be seen in \cite{Mehlitz2020}.

The following corollary is immediately obtained by \cite[Theorem 5.1]{Mehlitz2020} and Lemma \ref{th:SSOCQ-ACQ}.

\begin{corollary}
Let $\bar{x}\in \mathcal{F}$ be a locally optimal solution of \eqref{mpsc1} and the MPSC-SSOCQ hold at $\bar{x}$. Then $\bar{x}$ is an $M$-stationary point of \eqref{mpsc1}.
\end{corollary}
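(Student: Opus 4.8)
The plan is to reduce the claim to the already–established machinery of the excerpt: Lemma~\ref{th:SSOCQ-ACQ} tells us that MPSC-SSOCQ implies MPSC-ACQ, and \cite[Theorem~5.1]{Mehlitz2020} asserts that a locally optimal solution of \eqref{mpsc1} at which MPSC-ACQ holds is an $M$-stationary point. Consequently the corollary is a one-line composition of these two facts, and the body of the proof consists in invoking them in the correct order.

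Concretely, first I would assume $\bar{x}\in\mathcal{F}$ is a locally optimal solution of \eqref{mpsc1} at which MPSC-SSOCQ holds. Second, I would apply Lemma~\ref{th:SSOCQ-ACQ} to conclude that MPSC-ACQ holds at $\bar{x}$, i.e. $\mathcal{L}^{MPSC}_\mathcal{F}(\bar{x})=\mathcal{T}_\mathcal{F}(\bar{x})$. Third, since $\bar{x}$ is a local minimizer, the elementary first-order condition $\nabla f(\bar{x})^\top d\ge 0$ for all $d\in\mathcal{T}_\mathcal{F}(\bar{x})$ holds; combined with MPSC-ACQ this gives $\nabla f(\bar{x})^\top d\ge 0$ for all $d\in\mathcal{L}^{MPSC}_\mathcal{F}(\bar{x})$, which is precisely the hypothesis needed to invoke \cite[Theorem~5.1]{Mehlitz2020}. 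Fourth, applying that theorem yields that $\bar{x}$ is an $M$-stationary point of \eqref{mpsc1}, which completes the argument.

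There is essentially no genuine obstacle here, since all the real work has been packaged into Lemma~\ref{th:SSOCQ-ACQ} and the cited result of Mehlitz; the only point requiring a moment's care is to make sure the statement of \cite[Theorem~5.1]{Mehlitz2020} is being quoted in the form "MPSC-ACQ at a local minimizer $\Rightarrow$ $M$-stationarity" rather than some other combination of hypotheses, and, if necessary, to spell out the trivial variational inequality $\nabla f(\bar{x})^\top d\ge 0$ on $\mathcal{T}_\mathcal{F}(\bar{x})$ explicitly before converting it to a statement on $\mathcal{L}^{MPSC}_\mathcal{F}(\bar{x})$ via the ACQ equality. I would therefore write the proof as a short two-sentence argument: cite Lemma~\ref{th:SSOCQ-ACQ} to get MPSC-ACQ, then cite \cite[Theorem~5.1]{Mehlitz2020} to get $M$-stationarity, and conclude.
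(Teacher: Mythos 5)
Your proposal is correct and coincides with the paper's own argument, which likewise obtains the corollary immediately by combining Lemma~\ref{th:SSOCQ-ACQ} (MPSC-SSOCQ $\Rightarrow$ MPSC-ACQ) with \cite[Theorem~5.1]{Mehlitz2020}. The extra remark about passing the variational inequality from $\mathcal{T}_\mathcal{F}(\bar{x})$ to $\mathcal{L}^{MPSC}_\mathcal{F}(\bar{x})$ is just an unpacking of what that cited theorem already does internally.
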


\section{Second-order necessary conditions of (MPSC)}\label{sec4}
 In this section,  we investigate strong second-order necessary conditions and weak second-order necessary conditions of \eqref{mpsc1} by using the MPSC-tailored versions of second-order constraint qualifications.

\begin{definition}
Let $\bar{x}\in\mathcal{F}$ be an $S$-stationary point of \eqref{mpsc1}. We say that
\begin{enumerate}
  \item[{\rm (i)}]
  the MPSC-tailored strong second-order necessary condition (MPSC-SSONC) holds at $\bar{x}$ iff, for every multiplier vector
$(\lambda,\rho,\mu,\nu)\in\mathbb{R}^m_{+}\times\mathbb{R}^p\times\mathbb{R}^l\times\mathbb{R}^l$ satisfying \eqref{ss} associated with $\bar{x}$, we have
\begin{equation}\label{ssonc}
  d^\top \nabla_{xx}^2 \ell(\bar{x},\lambda,\rho,\mu,\nu)d\geq0,\quad \forall d\in \mathcal{C}^{MPSC}_\mathcal{F}(\bar{x}).
\end{equation}
  \item[{\rm (ii)}]
  the MPSC-tailored weak second-order necessary condition (MPSC-WSONC) holds at $\bar{x}$ iff, for every multiplier vector
$(\lambda,\rho,\mu,\nu)\in\mathbb{R}^m_{+}\times\mathbb{R}^p\times\mathbb{R}^l\times\mathbb{R}^l$ satisfying \eqref{ss} associated with $\bar{x}$, we have
\begin{equation}\label{wsonc}
  \tilde{d}^\top \nabla_{xx}^2 \ell(\bar{x},\lambda,\rho,\mu,\nu)\tilde{d}\geq0,\quad \forall \tilde{d}\in \tilde{\mathcal{C}}^{MPSC}_\mathcal{F}(\bar{x}).
\end{equation}
\end{enumerate}
\end{definition}

\begin{remark}\label{rem4.2}
According to $\tilde{\mathcal{C}}^{MPSC}_\mathcal{F}(\bar{x})\subseteq\mathcal{C}^{MPSC}_\mathcal{F}(\bar{x})$,  we obtain that the MPSC-SSONC implies the MPSC-WSONC. Since $\mathcal{I}^+_g(\bar{x},\lambda)=\mathcal{I}_g$ implies $\tilde{\mathcal{C}}^{MPSC}_\mathcal{F}(\bar{x})=\mathcal{C}^{MPSC}_\mathcal{F}(\bar{x})$, then
 the MPSC-SSONC and the MPSC-WSONC are equivalent when  $\bar{x}$ is
an $S$-stationary point of \eqref{mpsc1} and  $\mathcal{I}^+_g(\bar{x},\lambda)=\mathcal{I}_g$.
\end{remark}

We now investigate the MPSC-SSONC and MPSC-WSONC by the MPSC-SSOCQ and MPSC-WSOCQ, respectively.

\begin{theorem}\label{th4.1}
Let $\bar{x}\in\mathcal{F}$ be a locally optimal solution of \eqref{mpsc1}. Assume that $\bar{x}$ is an $S$-stationary point of \eqref{mpsc1} and MPSC-SSOCQ holds at $\bar{x}$. Then the MPSC-SSONC is satisfied at $\bar{x}$.
\end{theorem}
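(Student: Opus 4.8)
The plan is to derive the second-order necessary condition from the classical second-order necessary condition for standard nonlinear programs applied along the arcs produced by the MPSC-SSOCQ. Fix a multiplier vector $(\lambda,\rho,\mu,\nu)$ satisfying \eqref{ss} and an arbitrary critical direction $d\in\mathcal{C}^{MPSC}_\mathcal{F}(\bar{x})$; we may assume $d\neq0$, since the inequality \eqref{ssonc} is trivial otherwise. Since $d\in\mathcal{C}^{MPSC}_\mathcal{F}(\bar{x})\subseteq\mathcal{L}^{MPSC}_\mathcal{F}(\bar{x})$, the MPSC-SSOCQ yields $\delta>0$, index sets $\mathcal{J},\mathcal{K}\subseteq\mathcal{I}_{GH}$ (with $\mathcal{J}\cup\mathcal{K}=\mathcal{I}_{GH}$, as used in the proof of Lemma \ref{th4.2}), and a twice differentiable arc $\xi:[0,\delta)\to\mathbb{R}^n$ satisfying \eqref{ssocq1}--\eqref{ssocq5}. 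In particular $\xi(t)\in\mathcal{F}_{(\beta_1,\beta_2)}$ for a suitable branch $(\beta_1,\beta_2)$ with $\beta_1\subseteq\mathcal{J}$, $\beta_2\subseteq\mathcal{K}$, and $\xi(t)$ remains feasible for \eqref{mpsc1} for all $t\in[0,\delta)$.

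Next I would exploit local optimality of $\bar{x}$. Because $\xi(0)=\bar{x}$, $\xi'_+(0)=d$ and $\xi(t)\in\mathcal{F}$, the scalar function $\varphi(t):=f(\xi(t))$ attains a local minimum at $t=0$ over $[0,\delta)$, so $\varphi'_+(0)=\nabla f(\bar{x})^\top d\geq0$; combined with $\nabla f(\bar{x})^\top d\leq0$ from the definition of the critical cone, this forces $\nabla f(\bar{x})^\top d=0$ and hence $\varphi''_+(0)\geq0$. Now I would compute $\varphi''_+(0)$ via the chain rule: writing $\xi''_+(0)=:w$,
\begin{equation*}
  \varphi''_+(0)=d^\top\nabla^2 f(\bar{x})d+\nabla f(\bar{x})^\top w.
\end{equation*}
The idea is to replace $f$ by the Lagrangian: along the arc, the active constraints $g_i\circ\xi$ ($i\in\mathcal{I}$), $h_j\circ\xi$, $G_k\circ\xi$ ($k\in\mathcal{I}_G\cup\mathcal{J}$) and $H_k\circ\xi$ ($k\in\mathcal{I}_H\cup\mathcal{K}$) are identically zero, so their first and second right-derivatives at $0$ vanish. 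One has to check the remaining multiplier components are switched off: for $i\in\mathcal{I}_g\setminus\mathcal{I}$ we have $\nabla g_i(\bar{x})^\top d<0$, but since $d\in\mathcal{C}^{MPSC}_\mathcal{F}(\bar{x})$ and \eqref{ccone2} gives $\nabla g_i(\bar{x})^\top d=0$ for $i\in\mathcal{I}^+_g(\bar{x},\lambda)$, we get $\mathcal{I}^+_g(\bar{x},\lambda)\subseteq\mathcal{I}$, whence $\lambda_i=0$ for $i\in\mathcal{I}_g\setminus\mathcal{I}$ and also $\lambda_i=0$ for $i\notin\mathcal{I}_g$ by complementary slackness; for $k\in\mathcal{I}_{GH}$, the $S$-stationarity conditions \eqref{ss} force $\mu_k=\nu_k=0$, so those indices contribute nothing regardless of whether $k\in\mathcal{J}$ or $k\in\mathcal{K}$; similarly $\nu_k=0$ for $k\in\mathcal{I}_G$ and $\mu_k=0$ for $k\in\mathcal{I}_H$. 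Consequently
\begin{equation*}
  0\leq\varphi''_+(0)=\frac{d^2}{dt^2}\Big|_{t=0^+}\ell(\xi(t),\lambda,\rho,\mu,\nu)=d^\top\nabla^2_{xx}\ell(\bar{x},\lambda,\rho,\mu,\nu)d+\nabla_x\ell(\bar{x},\lambda,\rho,\mu,\nu)^\top w,
\end{equation*}
and the last term vanishes because $\nabla_x\ell(\bar{x},\lambda,\rho,\mu,\nu)=0$ by \eqref{ss}. This gives \eqref{ssonc}; since $d$ and the multiplier were arbitrary, the MPSC-SSONC holds at $\bar{x}$.

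The main obstacle I anticipate is the bookkeeping that legitimizes passing from $f$ to $\ell$ along the arc, i.e. verifying rigorously that every active-constraint term has vanishing second right-derivative (using \eqref{ssocq3}--\eqref{ssocq5} and the chain rule for twice differentiable arcs) while every inactive term has a zero multiplier (using complementarity, the critical-cone structure \eqref{ccone2}, and the $S$-stationarity equalities $\mu_k=\nu_k=0$ on $\mathcal{I}_{GH}$). A secondary technical point is ensuring the one-sided Taylor expansion $f(\xi(t))=f(\bar{x})+t\nabla f(\bar{x})^\top d+\tfrac{t^2}{2}\varphi''_+(0)+o(t^2)$ is valid for a twice (one-sidedly) differentiable arc and suffices to conclude $\varphi''_+(0)\geq0$ from local minimality; this is standard but should be stated carefully.
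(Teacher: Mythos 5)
Your proposal is correct and follows essentially the same route as the paper's proof: use the MPSC-SSOCQ arc, deduce $\nabla f(\bar{x})^\top d=0$ and $\theta''_+(0)\geq 0$ from local optimality (the paper makes the "standard" one-sided second-order step rigorous via the Mean-Value Theorem applied to $\theta'$), differentiate the identically-zero active constraints twice along the arc, and use $\mathcal{I}^+_g(\bar{x},\lambda)\subseteq\mathcal{I}$ together with $\mu_k=\nu_k=0$ on $\mathcal{I}_{GH}$ and $\nabla_x\ell(\bar{x},\lambda,\rho,\mu,\nu)=0$ to pass from $f$ to the Lagrangian. The only cosmetic difference is that the paper obtains $\nabla f(\bar{x})^\top d=0$ directly from $S$-stationarity and \eqref{ccone2} rather than by combining the two inequalities as you do; your side remarks about $\mathcal{J}\cup\mathcal{K}=\mathcal{I}_{GH}$ and the branch $\mathcal{F}_{(\beta_1,\beta_2)}$ are harmless but not needed.
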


\begin{proof}
Let $(\lambda,\rho,\mu,\nu)\in\mathbb{R}^m_{+}\times\mathbb{R}^p\times\mathbb{R}^l\times\mathbb{R}^l$ be an arbitrary multiplier vector associated with $\bar{x}$ satisfying \eqref{ss}. For any  $d\in\mathcal{C}^{MPSC}_\mathcal{F}(\bar{x})\subseteq \mathcal{L}^{MPSC}_\mathcal{F}(\bar{x})$ and $d\neq 0$,
 we deduce from the MPSC-SSOCQ that there exist $\delta>0$ and a twice differentiable arc $\xi: [0,\delta)\rightarrow \mathbb{R}^n$ such that $\xi(0)=\bar{x}$, $\xi'_{+}(0)=d$ and $\xi(t)\in\mathcal{F}$ for all $t\in[0,\delta)$. Define a function $\theta: [0,\delta)\rightarrow\mathbb{R}$ by $\theta(t):=f(\xi(t))$. Since $\bar{x}$ is $S$-stationary to \eqref{mpsc1}, by using the chain rule and \eqref{ccone2}, it yields that $\theta'_{+}(0)=\nabla f(\bar{x})^\top d=0$. Since $\bar{x}\in\mathcal{F}$ is a locally optimal solution of \eqref{mpsc1}, without loss of generality, we have
\begin{equation*}
  \theta(0)=f(\bar{x})\leq f(\xi(t))=\theta(t), \,\forall\, t\in [0,\delta).
\end{equation*}
 We conclude from the Mean-Value Theorem that there exists $ \eta_k\in \left(0, \frac{1}{k} \right)$ such that
\begin{equation*}
  \frac{\theta(\frac{1}{k})-\theta(0)}{\frac{1}{k}}=\theta'(\eta_k)
\end{equation*}
for all $k\in\mathbb{N}$ sufficiently large, this implies that $\theta'(\eta_k)\geq0$. Therefore, by using the chain rule for $\theta$, one has
\begin{equation}\label{cr1}
  \theta''_+(0)=d^\top \nabla^2f(\bar{x})d+\nabla f(\bar{x})^\top\xi''_+(0)=\lim_{k\rightarrow\infty}{\frac{\theta'(\eta_k)-\theta'(0)}{\eta_k}}\geq0.
\end{equation}
Due to $g_i(\xi(t))\equiv0 \,(\forall\, i\in\mathcal{I})$, $h_j(\xi(t))\equiv0 \,(\forall\, j\in\mathcal{I}_h)$, $G_k(\xi(t))\equiv0\, (\forall\, k\in\mathcal{I}_G\cup\mathcal{J})$ and $H_k(\xi(t))\equiv0 \,(\forall\, k\in\mathcal{I}_H\cup \mathcal{K})$ for all $t\in[0,\delta)$, using the chain rule directly, we obtain
\begin{align*}
  (g_i\circ\xi)''_+(0) &= d^\top \nabla^2g_i(\bar{x})d+\nabla g_i(\bar{x})^\top\xi''_+(0)=0,\quad \forall i\in\mathcal{I},  \\
  (h_j\circ\xi)''_+(0) &= d^\top \nabla^2h_j(\bar{x})d+\nabla h_j(\bar{x})^\top\xi''_+(0)=0,\quad \forall j\in\mathcal{I}_h,  \\
  (G_k\circ\xi)''_+(0) &= d^\top \nabla^2G_k(\bar{x})d+\nabla G_k(\bar{x})^\top\xi''_+(0)=0,\quad \forall k\in\mathcal{I}_G\cup\mathcal{J},  \\
  (H_k\circ\xi)''_+(0) &= d^\top \nabla^2H_k(\bar{x})d+\nabla H_k(\bar{x})^\top\xi''_+(0)=0,\quad \forall k\in\mathcal{I}_H\cup\mathcal{K}.
\end{align*}
Therefore, multiplying the equalities above by $\lambda_i$, $\rho_j$, $\mu_k$ and $\nu_k$, respectively, summing them together with \eqref{cr1}, and noting that $\mathcal{I}^+_g(\bar{x},\lambda)\subseteq\mathcal{I}$ and $\mathcal{J},\mathcal{K}\subseteq \mathcal{I}_{GH}$, we have
\begin{equation*}
  d^\top \nabla_{xx}^2 \ell(\bar{x},\lambda,\rho,\mu,\nu)d+\nabla_{x} \ell(\bar{x},\lambda,\rho,\mu,\nu)^\top\xi''_+(0)\geq0,
\end{equation*}
which implies that $d^\top \nabla_{xx}^2 \ell(\bar{x},\lambda,\rho,\mu,\nu)d\geq0$ holds. By the arbitrariness of $(\lambda,\rho,\mu,\nu)$ and $d$, we can easily derive that \eqref{ssonc} holds, i.e., MPSC-SSONC holds at $\bar{x}$. The proof is completed.
\end{proof}

\begin{theorem}\label{th4.3}
Let $\bar{x}\in\mathcal{F}$ be a locally optimal solution of \eqref{mpsc1}. Assume that $\bar{x}$ is an $S$-stationary point of \eqref{mpsc1} and the MPSC-WSOCQ holds at $\bar{x}$. Then the MPSC-WSONC holds at $\bar{x}$.
\end{theorem}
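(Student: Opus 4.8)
The structure of Theorem \ref{th4.3} closely parallels that of Theorem \ref{th4.1}, with the critical cone $\mathcal{C}^{MPSC}_\mathcal{F}(\bar{x})$ replaced by the critical subspace $\tilde{\mathcal{C}}^{MPSC}_\mathcal{F}(\bar{x})$ and the MPSC-SSOCQ replaced by the MPSC-WSOCQ. So the plan is to mimic the argument of Theorem \ref{th4.1} almost verbatim, tracking carefully which index sets are forced to have active constraints along the arc produced by the weak second-order constraint qualification. First I would fix an arbitrary multiplier vector $(\lambda,\rho,\mu,\nu)\in\mathbb{R}^m_+\times\mathbb{R}^p\times\mathbb{R}^l\times\mathbb{R}^l$ satisfying \eqref{ss} and an arbitrary nonzero $\tilde{d}\in\tilde{\mathcal{C}}^{MPSC}_\mathcal{F}(\bar{x})$; the case $\tilde{d}=0$ is trivial. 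Invoking the MPSC-WSOCQ, I obtain $\delta>0$, index sets $\mathcal{J},\mathcal{K}\subseteq\mathcal{I}_{GH}$, and a twice differentiable arc $\zeta:[0,\delta)\to\mathbb{R}^n$ satisfying \eqref{wsocq1}--\eqref{wsocq5}, so in particular $\zeta(0)=\bar{x}$, $\zeta'_+(0)=\tilde{d}$, $\zeta(t)\in\mathcal{F}$ for all $t\in[0,\delta)$, and $g_i(\zeta(t))\equiv 0$ for all $i\in\mathcal{I}_g$, $G_k(\zeta(t))\equiv 0$ for $k\in\mathcal{I}_G\cup\mathcal{J}$, $H_k(\zeta(t))\equiv 0$ for $k\in\mathcal{I}_H\cup\mathcal{K}$.

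Next I would set $\theta(t):=f(\zeta(t))$ on $[0,\delta)$. Since $\bar{x}$ is $S$-stationary and $\tilde{d}\in\tilde{\mathcal{C}}^{MPSC}_\mathcal{F}(\bar{x})\subseteq\mathcal{L}^{MPSC}_\mathcal{F}(\bar{x})$, the chain rule together with the stationarity equations gives $\theta'_+(0)=\nabla f(\bar{x})^\top\tilde{d}$; because $\tilde{d}$ annihilates all the gradients appearing in the representation of $\nabla_x\ell(\bar{x},\lambda,\rho,\mu,\nu)=0$ (indeed $\nabla g_i(\bar{x})^\top\tilde{d}=0$ for all $i\in\mathcal{I}_g$, $\nabla h_j(\bar{x})^\top\tilde{d}=0$, $\nabla G_k(\bar{x})^\top\tilde{d}=0$ for $k\in\mathcal{I}_G\cup\mathcal{I}_{GH}$, $\nabla H_k(\bar{x})^\top\tilde{d}=0$ for $k\in\mathcal{I}_H\cup\mathcal{I}_{GH}$), one gets $\nabla f(\bar{x})^\top\tilde{d}=0$, hence $\theta'_+(0)=0$. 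Local optimality of $\bar{x}$ and feasibility of the arc yield $\theta(0)\le\theta(t)$ for $t$ small, so by the Mean-Value Theorem applied to $\theta$ on $[0,1/k]$ there is $\eta_k\in(0,1/k)$ with $\theta'(\eta_k)=k(\theta(1/k)-\theta(0))\ge 0$, whence $\theta''_+(0)=d^\top\nabla^2 f(\bar{x})\tilde{d}+\nabla f(\bar{x})^\top\zeta''_+(0)=\lim_k \theta'(\eta_k)/\eta_k\ge 0$.

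Then I would differentiate the identities $g_i(\zeta(t))\equiv 0$ ($i\in\mathcal{I}_g$), $h_j(\zeta(t))\equiv 0$ ($j\in\mathcal{I}_h$), $G_k(\zeta(t))\equiv 0$ ($k\in\mathcal{I}_G\cup\mathcal{J}$), $H_k(\zeta(t))\equiv 0$ ($k\in\mathcal{I}_H\cup\mathcal{K}$) twice at $t=0$ via the chain rule, multiply respectively by $\lambda_i,\rho_j,\mu_k,\nu_k$, and sum with the inequality for $\theta''_+(0)$. Here I must account for the multipliers whose constraints may not be active along the arc: $\mu_k$ for $k\in\mathcal{I}_{GH}\setminus\mathcal{J}$ and $\nu_k$ for $k\in\mathcal{I}_{GH}\setminus\mathcal{K}$. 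But by \eqref{ss} we have $\mu_k=\nu_k=0$ for all $k\in\mathcal{I}_{GH}$, and $\mu_k=0$ for $k\in\mathcal{I}_H$, $\nu_k=0$ for $k\in\mathcal{I}_G$, while $\lambda_i=0$ for $i\notin\mathcal{I}_g$ by $\lambda^\top g(\bar{x})=0$ with $\lambda\ge 0$; so every multiplier not covered by the arc's equality constraints vanishes and contributes nothing. Collecting terms, the gradient-part $\nabla_x\ell(\bar{x},\lambda,\rho,\mu,\nu)^\top\zeta''_+(0)$ vanishes by \eqref{ss}, leaving $\tilde{d}^\top\nabla^2_{xx}\ell(\bar{x},\lambda,\rho,\mu,\nu)\tilde{d}\ge 0$. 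Since $(\lambda,\rho,\mu,\nu)$ and $\tilde{d}$ were arbitrary, \eqref{wsonc} holds, i.e. MPSC-WSONC holds at $\bar{x}$.

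**Main obstacle.** There is essentially no serious obstacle beyond Theorem \ref{th4.1}: the one point requiring care is the bookkeeping of which multipliers are paired with constraints that are genuinely held at zero along $\zeta$, and verifying that the remaining multipliers vanish by the $S$-stationarity conditions \eqref{ss} so that the leftover second-derivative-of-constraint terms drop out. Once that is checked, the argument is a line-by-line transcription of the proof of Theorem \ref{th4.1} with $\mathcal{I}_g$ in place of $\mathcal{I}$, $\tilde{\mathcal{C}}^{MPSC}_\mathcal{F}(\bar{x})$ in place of $\mathcal{C}^{MPSC}_\mathcal{F}(\bar{x})$, and MPSC-WSOCQ in place of MPSC-SSOCQ.
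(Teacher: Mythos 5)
Your proposal is correct and follows essentially the same route as the paper's proof: invoke the MPSC-WSOCQ arc, show $\theta'_+(0)=\nabla f(\bar x)^\top\tilde d=0$ via $S$-stationarity, apply local optimality and the Mean-Value Theorem to get $\theta''_+(0)\ge 0$, differentiate the constraint identities twice, and sum with the multipliers. The one point you flag as needing care — that the multipliers attached to constraints not held at zero along the arc all vanish by \eqref{ss} — is exactly the observation the paper makes at the end of its proof.
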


\begin{proof}
Let $(\lambda,\rho,\mu,\nu)\in\mathbb{R}^m\times\mathbb{R}^p\times\mathbb{R}^l\times\mathbb{R}^l$ be an arbitrary multiplier vector associated with $\bar{x}$ satisfying \eqref{ss}. For any  $\tilde{d}\in\tilde{\mathcal{C}}^{MPSC}_\mathcal{F}(\bar{x})\setminus \{0\}$, using the MPSC-WSOCQ yields that there exist $\delta>0$ and a twice differentiable arc $\zeta$: $[0,\delta)\rightarrow \mathbb{R}^n$ such that $\zeta(0)=\bar{x}$, $\zeta'_{+}(0)=\tilde{d}$ and $\zeta(t)\in\mathcal{F}$ for all $t\in[0,\delta)$.
 Set $\theta(t):=(f\circ \zeta)(t)$. Then $\theta (\cdot)$ is a twice differentiable function. By the definition of $S$-stationarity and \eqref{ccone2}, we can obtain $\theta'_{+}(0)=\nabla f(\bar{x})^\top \tilde{d}=0$. Since $\bar{x}\in\mathcal{F}$ is a locally optimal solution of \eqref{mpsc1}, then, there exists $k_1\in \mathbb{N}$ such that $\theta(1/k)\geq \theta(0)$ for $k\geq k_1$. By the  Mean-Value Theorem, we deduce that for each $k\geq k_1$,   there exists $ \eta_k\in(0, 1/k)$ such that
\begin{equation*}
  \frac{\theta(1/k)-\theta(0)}{1/k}=\theta'(\eta_k)\geq0.
\end{equation*}
Hence, we obtain
\begin{align*}
 \theta''_+(0)&=(f\circ\zeta)''_+(0) = \tilde{d}^\top \nabla^2f(\bar{x})\tilde{d}+\nabla f(\bar{x})^\top\zeta''_+(0)\geq0,\\
  (g_i\circ\zeta)''_+(0) &= \tilde{d}^\top \nabla^2g_i(\bar{x})\tilde{d}+\nabla g_i(\bar{x})^\top\zeta''_+(0)=0,\,\, \forall\, i\in\mathcal{I}_g,  \\
  (h_j\circ\zeta)''_+(0) &= \tilde{d}^\top \nabla^2h_j(\bar{x})\tilde{d}+\nabla h_j(\bar{x})^\top\zeta''_+(0)=0,\,\, \forall\, j\in\mathcal{I}_h,  \\
  (G_k\circ\zeta)''_+(0) &= \tilde{d}^\top \nabla^2G_k(\bar{x})\tilde{d}+\nabla G_k(\bar{x})^\top\zeta''_+(0)=0,\,\, \forall \, k\in\mathcal{I}_G\cup \mathcal{J},
  \end{align*}
 and
 \begin{align*}
  (H_k\circ\zeta)''_+(0) &= \tilde{d}^\top \nabla^2 H_k(\bar{x})\tilde{d}+\nabla H_k(\bar{x})^\top\zeta''_+(0)=0,\,\, \forall\, k\in\mathcal{I}_H\cup \mathcal{K}.
\end{align*}
Consequently, we obtain that
\begin{equation*}
  \tilde{d}^\top \nabla_{xx}^2 \ell(\bar{x},\lambda,\rho,\mu,\nu)\tilde{d}+\nabla_{x} \ell(\bar{x},\lambda,\rho,\mu,\nu)^\top\zeta''_+(0)\geq0
\end{equation*}
 by the similar process as the proof of Theorem \ref{th4.1}. Taking into account that $\bar{x}$ is an $S$-stationary point of \eqref{mpsc1} with multiplier vector $(\lambda,\rho,\mu,\nu)$, we have that \eqref{wsonc} holds for all $\tilde{d}\in\tilde{\mathcal{C}}^{MPSC}_\mathcal{F}(\bar{x})$ because of $\mu_{k}=\nu_{k}=0$ whenever $k\in \mathcal{I}_{GH}$. Therefore, the MPSC-WSONC holds at $\bar{x}$. The proof is completed.
\end{proof}

The following result can be derived by Remark \ref{re4.1}(i) and
Lemmas \ref{th4.2} and \ref{le:CRSC-SSOCQ} immediately.

\begin{corollary}\label{cor4.1}
Let $\bar{x}\in\mathcal{F}$ be a locally optimal solution of \eqref{mpsc1}. Assume that $\bar{x}$ is an $S$-stationary point of \eqref{mpsc1} and the MPSC-LICQ/MPSC-RCRCQ/MPSC-PCRSC holds at $\bar{x}$. Then the MPSC-SSONC holds at $\bar{x}$.
\end{corollary}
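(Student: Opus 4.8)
The plan is to derive Corollary \ref{cor4.1} purely by chaining together the implications already established. The strategy is to show that each of the three hypothesized constraint qualifications—MPSC-LICQ, MPSC-RCRCQ, MPSC-PCRSC—forces MPSC-SSOCQ to hold at $\bar{x}$, and then to invoke Theorem \ref{th4.1}, which says that a locally optimal $S$-stationary point at which MPSC-SSOCQ holds must satisfy MPSC-SSONC. So the whole proof is a short diagram-chase; there is no new analytic content.

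First I would handle the three cases in order of strength. For MPSC-RCRCQ, Lemma \ref{th4.2} directly states that MPSC-RCRCQ at $\bar{x}$ implies MPSC-SSOCQ at $\bar{x}$, so this case is immediate. For MPSC-LICQ, I would appeal to Remark \ref{re4.1}(i), which records the implication $\textrm{MPSC-LICQ}\Longrightarrow\textrm{MPSC-RCRCQ}$; composing with Lemma \ref{th4.2} then gives MPSC-SSOCQ. For MPSC-PCRSC, Lemma \ref{le:CRSC-SSOCQ} asserts precisely that MPSC-PCRSC at $\bar{x}$ implies MPSC-SSOCQ at $\bar{x}$, so this case is also immediate. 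Hence in every case the hypothesis guarantees MPSC-SSOCQ.

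Then I would close the argument: since $\bar{x}$ is a locally optimal solution of \eqref{mpsc1}, is an $S$-stationary point of \eqref{mpsc1}, and MPSC-SSOCQ holds at $\bar{x}$ by the previous paragraph, Theorem \ref{th4.1} applies and yields that the MPSC-SSONC is satisfied at $\bar{x}$. This completes the proof.

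There is essentially no obstacle here; the only thing to be careful about is bookkeeping—making sure each cited implication is stated in the excerpt (it is: Remark \ref{re4.1}(i) for $\textrm{MPSC-LICQ}\Rightarrow\textrm{MPSC-RCRCQ}$, Lemma \ref{th4.2} for $\textrm{MPSC-RCRCQ}\Rightarrow\textrm{MPSC-SSOCQ}$, Lemma \ref{le:CRSC-SSOCQ} for $\textrm{MPSC-PCRSC}\Rightarrow\textrm{MPSC-SSOCQ}$, and Theorem \ref{th4.1} for the final step)—and that the hypotheses of Theorem \ref{th4.1} (local optimality and $S$-stationarity) are exactly the ones assumed in the corollary, which they are. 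One minor subtlety worth a sentence in the write-up: the corollary is stated with an \lq\lq or\rq\rq\ among the three CQs, so the proof should explicitly note that it suffices to treat the three alternatives separately and that each alternative independently produces MPSC-SSOCQ.
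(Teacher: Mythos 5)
Your proposal is correct and follows exactly the paper's route: the paper derives the corollary "by Remark \ref{re4.1}(i) and Lemmas \ref{th4.2} and \ref{le:CRSC-SSOCQ} immediately," i.e., each of the three constraint qualifications implies MPSC-SSOCQ (LICQ via RCRCQ), and then Theorem \ref{th4.1} is invoked. No gaps; the case-by-case bookkeeping you describe is all that is needed.
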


%
%

\begin{remark}
\begin{enumerate}
  \item [{\rm(i)}] Although
  Guo et al. \cite{Guo2013} obtained a very similar result to Corollary \ref{cor4.1} for MPEC, which means that the MPEC-RCRCQ is a second-order constraint qualification for MPEC. However, we utilize a completely different approach and discover that the MPSC-RCRCQ is also a second-order constraint qualification for MPSC.


  \item [{\rm(ii)}]
  If  the switching constraints of MPSC are regarded as simple equality constraints when $\mathcal{I}_{GH}\ne\emptyset$, then the MPSC-SSONC may hold but the classical SSONC does not hold. This indicates that the MPSC-SSONC does not imply classical SSONC when $\mathcal{I}_{GH}\ne\emptyset$; see Example \ref{exam:4.2}.
\end{enumerate}
\end{remark}

The following example illustrates that the assumption about constraint qualifications of Theorem \ref{th4.1} and Corollary \ref{cor4.1} can not be omitted; otherwise, the MPSC-SSONC will do not hold.

\begin{example}
Consider the problem \eqref{mpsc1} with $f(x):= -x_1^2-x_2^2, h(x):=x_1^2-x_2, G_{1}(x):=x_1$, $G_{2}(x):=x_1-x_2^2, H_{1}(x):= x_2$ and $ H_{2}(x):= x_2-x_1^2$, where $x=(x_1,x_2)^\top\in \mathbb{R}^{2}$.
Then  $\mathcal{F}=\left\{(0,0)^\top \right\}$ and $\mathcal{I}_{GH}=\{1,2\}$. Let $\bar{x}:=(0,0)^\top$. So, $\bar{x}$ is certainly the unique optimal solution and $S$-stationary point with an associated multiplier $(0,0,0,0,0)$.
By direct calculation, we obtain
\begin{equation*}
  \mathcal{C}^{MPSC}_\mathcal{F}(\bar{x})=\mathcal{L}^{MPSC}_\mathcal{F}(\bar{x})=\mathbb{R}\times \{ 0\} \subseteq \mathbb{R}^2,
\end{equation*}
 and
\begin{equation*}
  d^\top\nabla^2_{xx}\ell(\bar{x},0,0,0,0,0)d=d^\top\nabla^2f(\bar{x})d=-2d_1^2<0,\,\, \forall\, d\in \mathcal{C}^{MPSC}_\mathcal{F}(\bar{x})\setminus \left\{(0,0)^\top \right\},
\end{equation*}
which means that the MPSC-SSONC does not hold at $\bar{x}$.
For any $d\in\mathcal{L}^{MPSC}_\mathcal{F}(\bar{x})\setminus \left\{(0,0)^\top \right\}$, it is easy to see that there exists no twice differentiable arc $\xi$ satisfying Definition \ref{socq}. As a matter of fact, we can obtain $\xi(t)\equiv(0,0)^\top$ for all $t\in [0,\delta)$ from \eqref{ssocq1}, which is inconsistent with \eqref{ssocq2}. Thus, the MPSC-SSOCQ does not hold at $\bar{x}$. Besides,  we can also check that
the MPSC-LICQ, MPSC-RCRCQ and MPSC-PCRSC do not hold at $\bar{x}$.
\end{example}

 The following example shows that MPSC-SSONC does not imply classical SSONC when $\mathcal{I}_{GH}\ne\emptyset$.

\begin{example}\label{exam:4.2}
Consider the problem \eqref{mpsc1} with $f(x):= -x_1^2-x_2^2, h(x):=x_1-x_2, G(x):=x_1$ and $ H(x):= x_2$, where $x=(x_1,x_2)^\top\in \mathbb{R}^{2}$.
Then  $\mathcal{F}=\left\{(0,0)^\top \right\}$ and $\bar{x}=(0,0)^\top$ is the unique optimal solution and $S$-stationary point of \eqref{mpsc1} with corresponding multiplier $(0,0,0)$. It is easy to see that  $\mathcal{C}^{MPSC}_\mathcal{F}(\bar{x})=\{(0,0)^\top\}$ and the rank of vectors $\{(1,-1)^\top,(1,0)^\top,(0,1)^\top\}$ is equal to $2$ for all $x$ in a neighbourhood of $\bar{x}$. So, the MPSC-RCRCQ holds at $\bar{x}$. After the verification, the MPSC-SSONC is satisfied at $\bar{x}$, i.e.,  Corollary \ref{cor4.1} holds. However, the classical SSONC is violated at $\bar{x}$. Actually, if $h_1(x):=x_1-x_2$ and $h_2(x):=x_1x_2$, then, by direct calculation, the critical cone
\begin{equation*}
  \mathcal{C}_\mathcal{F}(\bar{x})=\left\{(d_1,d_2)^\top\in \mathbb{R}^2:d_1=d_2\right\}.
\end{equation*}
and $(0,0)$ is a KKT multiplier. Picking $\bar{d}=(1,1)^\top\in\mathcal{C}_\mathcal{F}(\bar{x})$, we have
\begin{equation*}
  \bar{d}^\top(\nabla^2f(\bar{x})+0\cdot\nabla^2h_1(\bar{x})+0\cdot\nabla^2h_2(\bar{x}))\bar{d}=-2-2=-4<0,
\end{equation*}
which implies that the classical SSONC does not hold at $\bar{x}$ when the switching constraints are regarded as equality constraints in this problem.
\end{example}

%
%
%
%
%
%

The sufficient conditions for the MPSC-WSONC can be derived by Remark \ref{re4.1}(i) and Lemma \ref{th4.4}.

\begin{corollary}\label{cor4.4}
Let $\bar{x}\in\mathcal{F}$ be a locally optimal solution of \eqref{mpsc1}. Assume that $\bar{x}$ is an $S$-stationary point of \eqref{mpsc1} and the MPSC-WCR holds at $\bar{x}$. Then the MPSC-WSONC holds at $\bar{x}$.
\end{corollary}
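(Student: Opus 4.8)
The plan is to obtain Corollary \ref{cor4.4} as an immediate consequence of the chain of results already established in the excerpt, so the proof is essentially a one-line deduction with two ingredients. First I would invoke Remark \ref{re4.1}(i), which records the implication $\textrm{MPSC-RCRCQ}\Rightarrow\textrm{MPSC-WCR}$ and, more to the point here, positions the MPSC-WCR among the weak constraint qualifications; what we actually need is that the hypothesis ``MPSC-WCR holds at $\bar x$'' is exactly the hypothesis of Lemma \ref{th4.4}. Thus the first step is to apply Lemma \ref{th4.4} to conclude that the MPSC-WSOCQ holds at $\bar x$.

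The second step is to feed this into Theorem \ref{th4.3}: since $\bar x$ is a locally optimal solution of \eqref{mpsc1}, $\bar x$ is an $S$-stationary point of \eqref{mpsc1}, and the MPSC-WSOCQ holds at $\bar x$ (just established), Theorem \ref{th4.3} yields that the MPSC-WSONC holds at $\bar x$, which is precisely the conclusion of the corollary. So the proof structure is simply: MPSC-WCR $\overset{\textrm{Lem.~\ref{th4.4}}}{\Longrightarrow}$ MPSC-WSOCQ $\overset{\textrm{Thm.~\ref{th4.3}}}{\Longrightarrow}$ MPSC-WSONC, under the standing assumptions that $\bar x$ is a local minimizer and an $S$-stationary point.

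There is no real obstacle here: all the substantive work — the arc-construction argument using Lemma \ref{lem4.1} (the constant-rank parametrization lemma) that turns the WCR rank condition into a feasible twice-differentiable arc tangent to any critical-subspace direction, and the Mean-Value-Theorem / chain-rule computation of the second derivative of $f\circ\zeta$ along that arc — has already been carried out inside the proofs of Lemma \ref{th4.4} and Theorem \ref{th4.3} respectively. The only thing to be careful about is to state the hypotheses in the right order and to note explicitly that local optimality plus $S$-stationarity are being carried over verbatim from the statement of Theorem \ref{th4.3}; one could also remark, paralleling Corollary \ref{cor4.1} and its analogue, that by Remark \ref{re4.1}(i) the same conclusion holds a fortiori under MPSC-RCRCQ, but that is optional.

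\begin{proof}
Since the MPSC-WCR holds at $\bar{x}$, Lemma \ref{th4.4} implies that the MPSC-WSOCQ holds at $\bar{x}$. Because $\bar{x}\in\mathcal{F}$ is a locally optimal solution of \eqref{mpsc1} and an $S$-stationary point of \eqref{mpsc1}, the assumptions of Theorem \ref{th4.3} are fulfilled, and hence the MPSC-WSONC holds at $\bar{x}$. The proof is completed.
\end{proof}
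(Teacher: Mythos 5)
Your proposal is correct and matches the paper's intended derivation: the corollary is obtained immediately by chaining Lemma \ref{th4.4} (MPSC-WCR $\Rightarrow$ MPSC-WSOCQ) with Theorem \ref{th4.3}. No gaps; the two-step deduction is exactly how the paper presents it.
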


\begin{remark}
Theorems \ref{th4.1} and  \ref{th4.3} give the strong second-order necessary conditions and weak second-order necessary conditions for \eqref{mpsc1} under the MPSC-SSOCQ and MPSC-WSOCQ respectively,
which improve Theorems 3.5 and  3.7 of \cite{Guo2013} in one sense even if the second-order optimality conditions for MPEC are discussed in \cite{Guo2013} since Lemmas \ref{th4.2} and \ref{le:CRSC-SSOCQ} imply that the MPSC-SSOCQ is weaker than the MPSC-RCRCQ and the MPSC-PCRSC, and Lemma \ref{th4.4} implies that the MPSC-WSOCQ is weaker than the MPSC-WCR. In other words, we obtained conclusions similar to those of \cite{Guo2013} under the weaker conditions.
\end{remark}

\section{Exact penalty for MPSC}\label{sec5}
In this section, we propose a penalty problem of \eqref{mpsc1} and study the sufficient conditions for the exact penalty by the local error bound conditions and second-order quasi-normality condition, respectively.

We now consider the following penalty problem:
\begin{eqnarray}\label{pp}
 \min f(x)+ \kappa\,\left(\sum_{i=1}^m{[g_i^+(x)]^2}+\sum_{j=1}^p{[h_j(x)]^2}+\sum_{k=1}^l{\min{\left\{[G_k(x)]^2,[H_k(x)]^2\right\}}}\right)^{\frac{1}{2}},
\end{eqnarray}
where $\kappa>0$ is a penalty parameter, and $g_i^+(x):=\max \left\{g_i(x), 0 \right\}$.

We next introduce the exact penalty notion of problem \eqref{pp}.

\begin{definition}\label{exact:penalty}
 We say that problem \eqref{pp} admits a local exact penalization at a locally optimal solution $\bar{x}$ of \eqref{mpsc1} iff, there exists $\bar{\kappa}>0$ such that $\bar{x}$ is a locally optimal solution of problem \eqref{pp} for all $\kappa> \bar{\kappa}$.
\end{definition}

\begin{definition}
The local error bound of \eqref{mpsc1} is satisfied at $\bar{x}\in\mathcal{F}$ iff, there exist $\varepsilon,\alpha>0$ such that
\begin{equation}\label{leb}
  \textrm{dist}_\mathcal{F}(x)\leq \alpha\left(\sum_{i=1}^m{ \left[g_i^+(x) \right]^2}+\sum_{j=1}^p{\left[h_j(x) \right]^2}+\sum_{k=1}^l{\min{\left\{\left[G_k(x) \right]^2, \left[H_k(x) \right]^2\right\}}}\right)^{\frac{1}{2}},
\end{equation}
for all $x\in\mathbb{B}_\varepsilon(\bar{x})$.
\end{definition}

It is worth noting that the local error bound condition \eqref{leb} is also called $\frac{1}{2}$-order local error bound condition or $(\alpha, \frac{1}{2})$-H\"{o}lder error bound condition; see \cite{Bai2023,Kruger2019}.
The following result shows that the locally error bound implies the locally exact penalty of problem \eqref{pp}.

\begin{theorem}\label{exact:penal}
Let $\bar{x}\in\mathcal{F}$ be a locally optimal solution of \eqref{mpsc1}. Assume that the local error bound of \eqref{mpsc1} holds at $\bar{x}$.
 Then the problem \eqref{pp} admits a local exact penalization at $\bar{x}$, i.e., there exists $\bar{\kappa}:=\alpha L_f>0$ such that $\bar{x}$ is a locally optimal solution of problem \eqref{pp} for all $\kappa> \bar{\kappa}$, where $\alpha$ is the error bound constant and $L_f$ is the Lipschitz constant of $f$  around  $\bar{x}$.
\end{theorem}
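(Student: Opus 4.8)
The plan is to show that the local error bound lets us control the distance to $\mathcal{F}$, and that the penalty term dominates this distance, so that any nearby point has penalized objective at least $f(\bar{x})$. First I would fix notation: write $P(x):=\left(\sum_{i=1}^m[g_i^+(x)]^2+\sum_{j=1}^p[h_j(x)]^2+\sum_{k=1}^l\min\{[G_k(x)]^2,[H_k(x)]^2\}\right)^{1/2}$ for the penalty term, so the penalized objective is $\Phi_\kappa(x):=f(x)+\kappa P(x)$, and note $P(x)=0$ exactly when $x\in\mathcal{F}$ near $\bar{x}$. Since $f$ is continuously differentiable, it is locally Lipschitz; pick a radius $\varepsilon_1>0$ and a constant $L_f$ so that $f$ is $L_f$-Lipschitz on $\mathbb{B}_{\varepsilon_1}(\bar{x})$, and also pick $\varepsilon_2>0$ so that $\bar{x}$ is a global minimizer of $f$ over $\mathcal{F}\cap\mathbb{B}_{\varepsilon_2}(\bar{x})$. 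Let $\varepsilon_0:=\min\{\varepsilon,\varepsilon_1,\varepsilon_2\}$, where $\varepsilon$ is the error-bound radius.

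Next I would carry out the main estimate. Take $\kappa>\bar{\kappa}:=\alpha L_f$ and consider any $x$ in a sufficiently small ball $\mathbb{B}_{\delta}(\bar{x})$ with $\delta<\varepsilon_0/2$ chosen so that the projection of $x$ onto $\mathcal{F}$ stays inside $\mathbb{B}_{\varepsilon_0}(\bar{x})$ (this is possible because $\mathrm{dist}_\mathcal{F}(x)\le\|x-\bar{x}\|<\delta$, so any near-projection $y\in\mathcal{F}$ with $\|x-y\|\le 2\,\mathrm{dist}_\mathcal{F}(x)$ satisfies $\|y-\bar{x}\|\le\|y-x\|+\|x-\bar{x}\|<3\delta<\varepsilon_0$). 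Choose such a $y\in\mathcal{F}\cap\mathbb{B}_{\varepsilon_0}(\bar{x})$ with $\|x-y\|\le 2\,\mathrm{dist}_\mathcal{F}(x)$ — actually it is cleaner to invoke closedness of $\mathcal{F}$ to get an exact projection $y$ with $\|x-y\|=\mathrm{dist}_\mathcal{F}(x)$. Then
\begin{align*}
\Phi_\kappa(x)&=f(x)+\kappa P(x)\ge f(y)-L_f\|x-y\|+\kappa P(x)\\
&=f(y)-L_f\,\mathrm{dist}_\mathcal{F}(x)+\kappa P(x)\ge f(\bar{x})-L_f\,\mathrm{dist}_\mathcal{F}(x)+\kappa P(x),
\end{align*}
where the first inequality is the Lipschitz bound on $f$, and the last uses local optimality of $\bar{x}$ over $\mathcal{F}\cap\mathbb{B}_{\varepsilon_0}(\bar{x})$ together with $y\in\mathcal{F}\cap\mathbb{B}_{\varepsilon_0}(\bar{x})$. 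Now apply the local error bound \eqref{leb}: $\mathrm{dist}_\mathcal{F}(x)\le\alpha P(x)$, hence $\Phi_\kappa(x)\ge f(\bar{x})+(\kappa-\alpha L_f)P(x)\ge f(\bar{x})=\Phi_\kappa(\bar{x})$, because $\kappa>\alpha L_f$ and $P(x)\ge0$. Thus $\bar{x}$ minimizes $\Phi_\kappa$ on $\mathbb{B}_\delta(\bar{x})$, i.e., problem \eqref{pp} admits a local exact penalization at $\bar{x}$ with penalty threshold $\bar{\kappa}=\alpha L_f$.

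The routine points I would double-check are: that $\min\{[G_k(x)]^2,[H_k(x)]^2\}=0$ exactly when $G_k(x)H_k(x)=0$, so that $P(x)=0\iff x\in\mathcal{F}$ in a neighborhood (immediate, since each summand is nonnegative and vanishes iff the corresponding constraint is met); and that the choice of $\delta$ genuinely keeps the projection point inside the error-bound/Lipschitz/optimality ball. The only place requiring a little care — and the main (mild) obstacle — is this localization bookkeeping: the error bound, the Lipschitz property of $f$, and the local optimality of $\bar{x}$ each come with their own radius, and one must shrink $\delta$ so that not only $x$ but also its projection $y$ onto $\mathcal{F}$ lies simultaneously in all three neighborhoods; the triangle-inequality argument above handles this. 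No constraint qualification or stationarity is needed here — the error bound does all the work — so the proof is short once the neighborhoods are lined up correctly.
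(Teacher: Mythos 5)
Your proof is correct, and it is precisely the standard exact-penalty argument (project onto the closed set $\mathcal{F}$, use local Lipschitz continuity of $f$, local optimality of $\bar{x}$ on $\mathcal{F}$, and the error bound $\mathrm{dist}_\mathcal{F}(x)\le\alpha P(x)$ to absorb the deficit into the penalty term); the paper omits the proof of this theorem, and your argument is exactly the one that would be supplied, with the localization bookkeeping handled properly. The only point worth noting is that the threshold could be stated as $\kappa\ge\bar{\kappa}$ rather than $\kappa>\bar{\kappa}$, since your final inequality already holds with $\kappa=\alpha L_f$, but this only strengthens the stated conclusion.
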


We now show that the local error bound condition \eqref{leb} holds under some mild constraint qualifications. For this,
we introduce the notion of second-order quasi-normality in the sense of MPSC, which is the extension of the second-order quasi-normality introduced in \cite[Definition 3.2]{Bai2023} from nonlinear problems to switching problems.

\begin{definition}\label{def5.2}
 We say that
  MPSC piecewise second-order quasi-normality (MPSC-PSOQN) holds at $\bar{x}\in \mathcal{F}$ iff, for each $(\beta_1,\beta_2)\in\mathcal{P}(\mathcal{I}_{GH})$, the second-order quasi-normality holds at $\bar{x}$ for nonlinear problem \eqref{mpsc3}. That is, for each $(\beta_1,\beta_2)\in\mathcal{P}(\mathcal{I}_{GH})$, there exists no nonzero vector $(\lambda,\rho,\mu,\nu)$ satisfying $\lambda\geq0$, such that
  \begin{align*}
      & \displaystyle \sum_{i=1}^m{\lambda_i\nabla g_i(\bar{x})}+\sum_{j=1}^p{\rho_j\nabla h_j(\bar{x})}+\sum_{k\in\mathcal{I}_G\cup\beta_1}{\mu_k\nabla G_k(\bar{x})}+\sum_{k\in\mathcal{I}_H\cup\beta_2}{\nu_k\nabla H_k(\bar{x})}=0,\\
     & \displaystyle\tilde{d}^\top \left(\sum_{i=1}^m{\lambda_i\nabla^2 g_i(\bar{x})}+\sum_{j=1}^p{\rho_j\nabla^2 h_j(\bar{x})}+\sum_{k\in\mathcal{I}_G\cup\beta_1}{\mu_k\nabla^2 G_k(\bar{x})}+\sum_{k\in\mathcal{I}_H\cup\beta_2}{\nu_k\nabla^2 H_k(\bar{x})}\right)\tilde{d}\\
     &\geq0,\,\,\forall\, \tilde{d}\in\tilde{\mathcal{C}}^{MPSC}_{\mathcal{F}_{(\beta_1,\beta_2)}}(\bar{x}),
    \end{align*}
     and  there exists a sequence $\{x^s\}$ converging to $\bar{x}$ such that for each $s$,
    \begin{align*}
    \begin{array}{ll}
      \lambda_i>0\Rightarrow \lambda_i g_i(x^s)>0, & \rho_j\ne0\Rightarrow \rho_j h_j(x^s)> 0, \\
      \mu_k\ne0\Rightarrow \mu_k G_k(x^s)>0, & \mu_k\ne0\Rightarrow \mu_k H_k(x^s)>0.
    \end{array}
    \end{align*}
\end{definition}

The following theorem present the sufficient conditions for the local error bound property of \eqref{mpsc1}.

\begin{theorem}\label{th5.2}
 Assume that the MPSC-PWCR and MPSC-PSOQN hold at $\bar{x}\in \mathcal{F}$. Then the local error bound of \eqref{mpsc1} holds at $\bar{x}$.
\end{theorem}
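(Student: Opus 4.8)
The plan is to reduce the error bound for \eqref{mpsc1} to error bounds for each branch problem \eqref{mpsc3} and then invoke the known nonlinear-programming result under second-order quasi-normality together with the constant-rank hypothesis. First I would recall that $\mathcal{F}=\bigcup_{(\beta_1,\beta_2)\in\mathcal{P}(\mathcal{I}_{GH})}\mathcal{F}_{(\beta_1,\beta_2)}$, so $\textrm{dist}_\mathcal{F}(x)=\min_{(\beta_1,\beta_2)}\textrm{dist}_{\mathcal{F}_{(\beta_1,\beta_2)}}(x)$ for every $x$. Hence it suffices to produce, for each partition $(\beta_1,\beta_2)$, constants $\varepsilon_{(\beta_1,\beta_2)},\alpha_{(\beta_1,\beta_2)}>0$ and a residual-type bound for $\textrm{dist}_{\mathcal{F}_{(\beta_1,\beta_2)}}(x)$ on $\mathbb{B}_{\varepsilon_{(\beta_1,\beta_2)}}(\bar x)$, and then take $\varepsilon:=\min_{(\beta_1,\beta_2)}\varepsilon_{(\beta_1,\beta_2)}$ and $\alpha$ a common constant; because the partition set is finite this is legitimate.

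The heart of the argument is the branch-wise estimate. Fix $(\beta_1,\beta_2)\in\mathcal{P}(\mathcal{I}_{GH})$. By hypothesis the WCR condition and second-order quasi-normality hold at $\bar x$ for the standard nonlinear program \eqref{mpsc3}; this is exactly the pair of assumptions under which \cite[Theorem 3.3 (or the corresponding error-bound theorem)]{Bai2023} yields a local $\tfrac12$-order (Hölder) error bound for a standard nonlinear program, namely there exist $\varepsilon_{(\beta_1,\beta_2)},\alpha_{(\beta_1,\beta_2)}>0$ with
\begin{equation*}
  \textrm{dist}_{\mathcal{F}_{(\beta_1,\beta_2)}}(x)\leq \alpha_{(\beta_1,\beta_2)}\, r_{(\beta_1,\beta_2)}(x)^{1/2},\qquad x\in\mathbb{B}_{\varepsilon_{(\beta_1,\beta_2)}}(\bar x),
\end{equation*}
where $r_{(\beta_1,\beta_2)}(x):=\sum_{i=1}^m[g_i^+(x)]^2+\sum_{j=1}^p[h_j(x)]^2+\sum_{k\in\mathcal{I}_G\cup\beta_1}[G_k(x)]^2+\sum_{k\in\mathcal{I}_H\cup\beta_2}[H_k(x)]^2$ is the squared constraint residual of \eqref{mpsc3}. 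The last step is to dominate $r_{(\beta_1,\beta_2)}(x)$, for a suitably chosen $(\beta_1,\beta_2)$ depending on $x$, by the switching residual appearing in \eqref{leb}. Indeed, given $x$ near $\bar x$, for each $k\in\mathcal{I}_{GH}$ put $k\in\beta_1$ if $|G_k(x)|\leq|H_k(x)|$ and $k\in\beta_2$ otherwise; then $[G_k(x)]^2$ (resp.\ $[H_k(x)]^2$) equals $\min\{[G_k(x)]^2,[H_k(x)]^2\}$ for that index, so $r_{(\beta_1,\beta_2)}(x)\leq \sum_{i=1}^m[g_i^+(x)]^2+\sum_{j=1}^p[h_j(x)]^2+\sum_{k=1}^l\min\{[G_k(x)]^2,[H_k(x)]^2\}$, the right-hand side of \eqref{leb} squared. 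Combining the three displays gives $\textrm{dist}_\mathcal{F}(x)\leq\textrm{dist}_{\mathcal{F}_{(\beta_1,\beta_2)}}(x)\leq\alpha\bigl(\sum[g_i^+]^2+\sum[h_j]^2+\sum\min\{[G_k]^2,[H_k]^2\}\bigr)^{1/2}$ with $\alpha:=\max_{(\beta_1,\beta_2)}\alpha_{(\beta_1,\beta_2)}$, which is exactly \eqref{leb}.

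The main obstacle, and the step that needs care, is the passage from the MPSC-tailored hypotheses to the branch hypotheses in the form required by \cite{Bai2023}: one must check that MPSC-PWCR gives precisely the weak constant rank condition for \eqref{mpsc3} at $\bar x$ (immediate from Definition of MPSC-PWCR) and that MPSC-PSOQN gives precisely the second-order quasi-normality of \eqref{mpsc3}, where the critical subspace used there is $\tilde{\mathcal{C}}^{MPSC}_{\mathcal{F}_{(\beta_1,\beta_2)}}(\bar x)$; this matching is exactly how Definition \ref{def5.2} was phrased, so the verification is a direct comparison of definitions rather than new analysis. A secondary subtlety is that the branch chosen in the final domination step varies with $x$, but since there are only finitely many branches and each comes with its own neighbourhood and constant, shrinking to the common $\varepsilon$ and enlarging to the common $\alpha$ handles this uniformly. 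I would also note that one should record that $\bar x\in\mathcal{F}_{(\beta_1,\beta_2)}$ for at least one partition (true by $\mathcal F=\bigcup\mathcal F_{(\beta_1,\beta_2)}$), which is what makes the branch-wise error bounds meaningful near $\bar x$.
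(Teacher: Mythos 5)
Your proposal is correct and follows essentially the route the paper intends: decompose $\mathcal{F}$ into the finitely many branch feasible sets $\mathcal{F}_{(\beta_1,\beta_2)}$, apply the error-bound theorem of Bai et al.\ (Theorem 4.1 of \cite{Bai2023}, which is exactly what MPSC-PWCR and MPSC-PSOQN are designed to activate branch by branch), and then dominate the branch residual by the switching residual via an $x$-dependent choice of partition, with the finiteness of $\mathcal{P}(\mathcal{I}_{GH})$ supplying uniform constants. The only point to add is that for $k\in\mathcal{I}_G$ (resp.\ $k\in\mathcal{I}_H$) the required inequality $[G_k(x)]^2\le\min\{[G_k(x)]^2,[H_k(x)]^2\}$ (resp.\ the analogue with $H_k$) is not automatic from your choice of $(\beta_1,\beta_2)$, which only governs $k\in\mathcal{I}_{GH}$, but it does hold after further shrinking the neighbourhood, since $G_k(\bar{x})=0\neq H_k(\bar{x})$ forces $|G_k(x)|<|H_k(x)|$ for all $x$ near $\bar{x}$ by continuity.
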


\begin{theorem}\label{th5.3}
 Assume that the MPSC-PCRSC holds at $\bar{x}\in \mathcal{F}$. Then the local error bound of \eqref{mpsc1} holds at $\bar{x}$.
\end{theorem}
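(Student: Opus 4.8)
The plan is to deduce Theorem \ref{th5.3} from Theorem \ref{th5.2} by showing that the MPSC-PCRSC assumption implies both hypotheses of Theorem \ref{th5.2}, namely the MPSC-PWCR and the MPSC-PSOQN. For the first implication, recall from Remark \ref{re4.1}(ii) that MPSC-PCRSC is equivalent to the statement that for each bipartition $(\beta_1,\beta_2)\in\mathcal{P}(\mathcal{I}_{GH})$ the rank of the gradient family indexed by $\mathcal{I}_0\cup\mathcal{I}_h\cup(\mathcal{I}_G\cup\beta_1)\cup(\mathcal{I}_H\cup\beta_2)$ is locally constant, where $\mathcal{I}_0=\mathcal{I}_g^-$. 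This is precisely the constant-rank-of-subspace-component (CRCS) condition for each branch problem \eqref{mpsc3}. It is a standard fact for nonlinear programming (and can be quoted from the CRCS literature, e.g.\ \cite{GuoZL}) that CRCS does \emph{not} in general give WCR, so one cannot argue this way directly; instead I would work branch-by-branch with the known implication chain and a dimension/rank argument: since MPSC-PCRSC holds, for each branch the gradients in $\mathcal{I}_0\cup\mathcal{I}_h\cup(\mathcal{I}_G\cup\beta_1)\cup(\mathcal{I}_H\cup\beta_2)$ have constant rank near $\bar x$, and by the definition of $\mathcal{I}_0$ the remaining active inequality gradients $\{\nabla g_i(\bar x)\}_{i\in\mathcal{I}_g\setminus\mathcal{I}_0}$ lie outside the subspace spanned by that family — so adding them at $\bar x$ strictly increases the rank, and by lower semicontinuity of rank this augmented family also has locally constant (full on those extra indices) rank, which is exactly MPSC-PWCR for that branch.

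Actually, the cleaner route — and the one I would commit to — is to observe that the MPSC-PCRSC hypothesis can be fed into Theorem \ref{th5.2} only after establishing MPSC-PWCR, so the crux is that very rank-monotonicity argument; alternatively, I would prove Theorem \ref{th5.3} from scratch by reducing to the branch problems exactly as in the proof of Theorem \ref{th5.2}, replacing the appeal to WCR+SOQN by an appeal to the fact (from \cite{Bai2023} or \cite{GuoZL}) that CRCS alone is a local-error-bound-inducing constraint qualification for standard nonlinear programs, and then gluing the branch error bounds via $\mathcal{F}=\bigcup_{(\beta_1,\beta_2)}\mathcal{F}_{(\beta_1,\beta_2)}$ and $\mathrm{dist}_{\mathcal{F}}(x)=\min_{(\beta_1,\beta_2)}\mathrm{dist}_{\mathcal{F}_{(\beta_1,\beta_2)}}(x)$. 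Concretely: fix a small ball $\mathbb{B}_\varepsilon(\bar x)$ on which every branch with $\bar x\in\mathcal{F}_{(\beta_1,\beta_2)}$ enjoys CRCS; for each such branch CRCS implies the ($\tfrac12$-order, actually here the stronger $1$-order) local error bound, i.e.\ $\mathrm{dist}_{\mathcal{F}_{(\beta_1,\beta_2)}}(x)\le \alpha_{(\beta_1,\beta_2)}\,\phi_{(\beta_1,\beta_2)}(x)$ near $\bar x$, where $\phi_{(\beta_1,\beta_2)}$ is the corresponding branch residual; since each branch residual dominates the MPSC residual appearing in \eqref{leb} (because $\min\{[G_k]^2,[H_k]^2\}\le [G_k]^2$ for $k\in\beta_1$ and $\le[H_k]^2$ for $k\in\beta_2$, and $[g_i^+]^2,[h_j]^2$ are common), and there are finitely many branches, take $\alpha:=\max_{(\beta_1,\beta_2)}\alpha_{(\beta_1,\beta_2)}$ and note that $\mathrm{dist}_{\mathcal{F}}(x)$ is achieved on whichever branch is closest — for $x$ near $\bar x$ this is a branch containing $\bar x$ — to conclude \eqref{leb}.

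The step I expect to be the main obstacle is making the last gluing argument airtight: for $x\notin\mathcal{F}$ near $\bar x$, the nearest point of $\mathcal{F}$ may a priori sit in a branch $\mathcal{F}_{(\beta_1,\beta_2)}$ with $\bar x\notin\mathcal{F}_{(\beta_1,\beta_2)}$, where CRCS (hence the branch error bound) was not assumed. One must rule this out by a localization argument: since $\mathcal{F}_{(\beta_1,\beta_2)}$ is closed and $\bar x\notin\mathcal{F}_{(\beta_1,\beta_2)}$ implies $\mathrm{dist}_{\mathcal{F}_{(\beta_1,\beta_2)}}(\bar x)=:\delta_{(\beta_1,\beta_2)}>0$, for $x\in\mathbb{B}_{\varepsilon'}(\bar x)$ with $\varepsilon'$ small we have $\mathrm{dist}_{\mathcal{F}_{(\beta_1,\beta_2)}}(x)\ge \delta_{(\beta_1,\beta_2)}-\varepsilon'$, which is bounded away from zero, so such ``far'' branches cannot realize $\mathrm{dist}_{\mathcal{F}}(x)$ once $\varepsilon'$ is small (using $\mathrm{dist}_{\mathcal{F}}(x)\le\|x-\bar x\|<\varepsilon'$). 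Hence on a sufficiently small ball the distance to $\mathcal{F}$ equals the minimum of distances to the branches through $\bar x$, and the error bound constants of those finitely many branches combine to give the claimed $\alpha$ and $\varepsilon$. I would also remark that Theorem \ref{th5.3} complements Theorem \ref{th5.2}: neither MPSC-PCRSC nor MPSC-PWCR implies the other (Examples \ref{Piecewise:CRSC-WCR:2} and \ref{Piecewise:CRSC-WCR}), so the two theorems are genuinely distinct sufficient conditions for the local error bound, and both feed Theorem \ref{exact:penal} to yield local exact penalization.
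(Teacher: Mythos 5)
Your committed (second) route---decompose into the branch problems \eqref{mpsc3}, invoke the fact that CRSC is an error-bound-inducing constraint qualification for standard nonlinear programs, and glue the branch bounds via $\mathcal{F}=\bigcup_{(\beta_1,\beta_2)}\mathcal{F}_{(\beta_1,\beta_2)}$---is the right strategy and is essentially the argument the theorem rests on. But two of your steps need repair. First, the opening attempt to extract MPSC-PWCR from MPSC-PCRSC must be discarded entirely, not merely treated as a detour: Example \ref{Piecewise:CRSC-WCR:2} of this paper exhibits a point where MPSC-PCRSC holds and MPSC-PWCR fails, so your ``rank-monotonicity'' sketch would prove a false implication. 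Lower semicontinuity of rank only prevents the rank of the augmented family from dropping near $\bar{x}$; it can still jump up off $\bar{x}$, which is exactly what happens in that example. Reducing to Theorem \ref{th5.2} is therefore not available, and Theorem \ref{th5.3} must be proved independently.

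Second, in the gluing step your inequality points the wrong way. From $\mathrm{dist}_{\mathcal{F}}(x)\le\mathrm{dist}_{\mathcal{F}_{(\beta_1,\beta_2)}}(x)\le\alpha_{(\beta_1,\beta_2)}\,R_{(\beta_1,\beta_2)}(x)$ together with $R_{(\beta_1,\beta_2)}(x)\ge R(x)$ (where $R$ denotes the MPSC residual in \eqref{leb}) one cannot conclude $\mathrm{dist}_{\mathcal{F}}(x)\le\alpha R(x)$; domination of $R$ by the branch residual is the unhelpful direction. What is needed is that for each $x$ near $\bar{x}$ \emph{some} branch residual is at most $R(x)$: put $k\in\beta_1$ when $|G_k(x)|\le|H_k(x)|$ and $k\in\beta_2$ otherwise, and observe that for $k\in\mathcal{I}_G$ (resp.\ $k\in\mathcal{I}_H$) continuity forces $\min\{[G_k(x)]^2,[H_k(x)]^2\}=[G_k(x)]^2$ (resp.\ $[H_k(x)]^2$) on a sufficiently small ball, so that $R_{(\beta_1,\beta_2)}(x)=R(x)$ for this choice of branch; then $\mathrm{dist}_{\mathcal{F}}(x)\le\alpha_{(\beta_1,\beta_2)}R(x)$ and taking $\alpha:=\max_{(\beta_1,\beta_2)}\alpha_{(\beta_1,\beta_2)}$ over the finitely many bipartitions gives \eqref{leb}. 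Finally, the ``main obstacle'' you anticipate is vacuous: every branch set $\mathcal{F}_{(\beta_1,\beta_2)}$ contains $\bar{x}$ by construction, since the branches are defined from the index sets at $\bar{x}$ and $G_k(\bar{x})=H_k(\bar{x})=0$ for $k\in\mathcal{I}_{GH}$, so no localization away from ``far'' branches is required---only the choice-of-branch argument above.
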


\begin{remark}
\begin{enumerate}
  \item [{\rm(i)}] If $\mathcal{I}_{GH}=\emptyset$, Theorem \ref{th5.2} reduces to the error bound results presented in \cite[Theorem 4.1]{Bai2023} for nonlinear programming with equality and inequality constraints. However, if $\mathcal{I}_{GH}\neq \emptyset$, the error bound results presented in \cite[Theorem 4.1]{Bai2023} can not be directly applied to mathematical programs with switching constraints. So, Theorem \ref{th5.2} is an extraordinary extension of the error bound results  \cite{Bai2023}.
  \item [{\rm(ii)}]
  Compared with the error bound condition proposed in \cite{Liang2021} for MPSC, the local error bound condition \eqref{leb} is weaker, since the $\ell_{2}$-norm is smaller than the $\ell_{1}$-norm in Euclidean space.
\end{enumerate}
\end{remark}

The following corollaries follow from Theorems \ref{exact:penal}, \ref{th5.2} and \ref{th5.3} immediately.

\begin{corollary}
Let $\bar{x}\in\mathcal{F}$ be a locally optimal solution of \eqref{mpsc1}. Assume that the MPSC-PWCR and MPSC-PSOQN hold at $\bar{x}$. Then the problem \eqref{pp} admits a local exact penalization at $\bar{x}$.
\end{corollary}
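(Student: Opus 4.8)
The plan is to obtain this corollary by simply composing the two preceding theorems, so the argument will be short. First I would invoke Theorem \ref{th5.2}: since the MPSC-PWCR and the MPSC-PSOQN are assumed to hold at $\bar{x}$, that theorem guarantees that the local error bound of \eqref{mpsc1} holds at $\bar{x}$, i.e.\ there exist constants $\varepsilon,\alpha>0$ such that the H\"older-type estimate \eqref{leb} is valid for all $x\in\mathbb{B}_\varepsilon(\bar{x})$.

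Next I would feed this conclusion into Theorem \ref{exact:penal}. Because $\bar{x}$ is by hypothesis a locally optimal solution of \eqref{mpsc1} and the local error bound now holds at $\bar{x}$, Theorem \ref{exact:penal} yields that the penalty problem \eqref{pp} admits a local exact penalization at $\bar{x}$; more precisely, setting $\bar{\kappa}:=\alpha L_f$, where $\alpha$ is the error-bound constant produced by Theorem \ref{th5.2} and $L_f$ is a local Lipschitz constant of $f$ around $\bar{x}$, the point $\bar{x}$ is a locally optimal solution of \eqref{pp} for every $\kappa>\bar{\kappa}$. This is exactly the assertion of the corollary, and the proof can simply read ``This follows directly from Theorems \ref{th5.2} and \ref{exact:penal}.''

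The only point that requires a little care is matching up the neighbourhoods: the radius $\varepsilon$ on which the error bound holds may be smaller than the radius on which $\bar{x}$ is a local minimizer and on which $f$ is Lipschitz, so one actually works on the intersection of the three balls; this bookkeeping is already absorbed into Theorem \ref{exact:penal}. I do not expect any genuine obstacle here, since all the substantive work lies in establishing Theorems \ref{th5.2} and \ref{exact:penal}; the corollary is a one-line chaining of those two results.
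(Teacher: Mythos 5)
Your proposal is correct and matches the paper's own argument exactly: the paper states that this corollary follows immediately from Theorem \ref{th5.2} (which delivers the local error bound under MPSC-PWCR and MPSC-PSOQN) combined with Theorem \ref{exact:penal} (which converts the local error bound into local exact penalization at a local minimizer). Your additional remark about intersecting the relevant neighbourhoods is a harmless bit of bookkeeping already implicit in the cited theorems.
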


\begin{corollary}
Let $\bar{x}\in\mathcal{F}$ be a locally optimal solution of \eqref{mpsc1}. Assume that the MPSC-PCRSC holds at $\bar{x}$. Then the problem \eqref{pp} admits a local exact penalization at $\bar{x}$.
\end{corollary}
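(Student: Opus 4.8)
The plan is to chain the two results already established in this section, so that the penalty conclusion follows without any fresh analysis. The corollary supplies two hypotheses: that $\bar{x}$ is a locally optimal solution of \eqref{mpsc1}, and that MPSC-PCRSC holds at $\bar{x}$. In particular, local optimality entails $\bar{x}\in\mathcal{F}$, so that $\bar{x}$ is an admissible base point for both theorems I intend to invoke.

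First I would apply Theorem~\ref{th5.3}, whose sole hypothesis is that MPSC-PCRSC holds at the feasible point $\bar{x}\in\mathcal{F}$ --- exactly what we are given. This produces the local error bound \eqref{leb} of \eqref{mpsc1} at $\bar{x}$, furnishing constants $\varepsilon,\alpha>0$ such that $\textrm{dist}_\mathcal{F}(x)$ is bounded above by $\alpha$ times the penalty term for all $x\in\mathbb{B}_\varepsilon(\bar{x})$.

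The decisive second step --- the one that must not be skipped --- is to feed this error bound into Theorem~\ref{exact:penal}. That theorem requires precisely two inputs: that $\bar{x}$ be a locally optimal solution of \eqref{mpsc1}, which is a standing hypothesis of the corollary, and that the local error bound of \eqref{mpsc1} hold at $\bar{x}$, which we have just obtained from Theorem~\ref{th5.3}. Its conclusion is that problem \eqref{pp} admits a local exact penalization at $\bar{x}$, with threshold $\bar{\kappa}=\alpha L_f$, where $\alpha$ is the error-bound constant from the previous step and $L_f$ is the Lipschitz constant of $f$ near $\bar{x}$ (finite because $f$ is $C^2$). This is exactly the assertion of the corollary, so the proof closes.

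There is no genuine analytic obstacle here, since both intermediate theorems are already in hand; the only point demanding care is bookkeeping, namely verifying that each theorem's hypothesis set is met and that the output of Theorem~\ref{th5.3} (the error bound together with its constant $\alpha$) is precisely the input that Theorem~\ref{exact:penal} consumes. The point I would emphasize is to carry out the error-bound-to-penalty implication \emph{explicitly}, so that the stated conclusion is the local exact penalization of \eqref{pp}, and not merely the intermediate local error bound \eqref{leb}.
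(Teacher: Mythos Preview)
Your proposal is correct and follows essentially the same approach as the paper: the paper states that this corollary follows immediately from Theorems~\ref{exact:penal} and~\ref{th5.3}, which is precisely the two-step chain you describe. Your added bookkeeping (verifying hypotheses and tracking the constant $\bar{\kappa}=\alpha L_f$) is faithful to the argument and, if anything, slightly more explicit than the paper's one-line justification.
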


In the end of this section, we summarize the relations among constraint qualifications,  weak/strong second-order necessary conditions, local error bound and local exact penalty for MPSC in Figure \ref{fig1}. These relations are specifically explained in this paper.

\begin{figure}[htbp]
  \centering
  \includegraphics[scale=0.038]{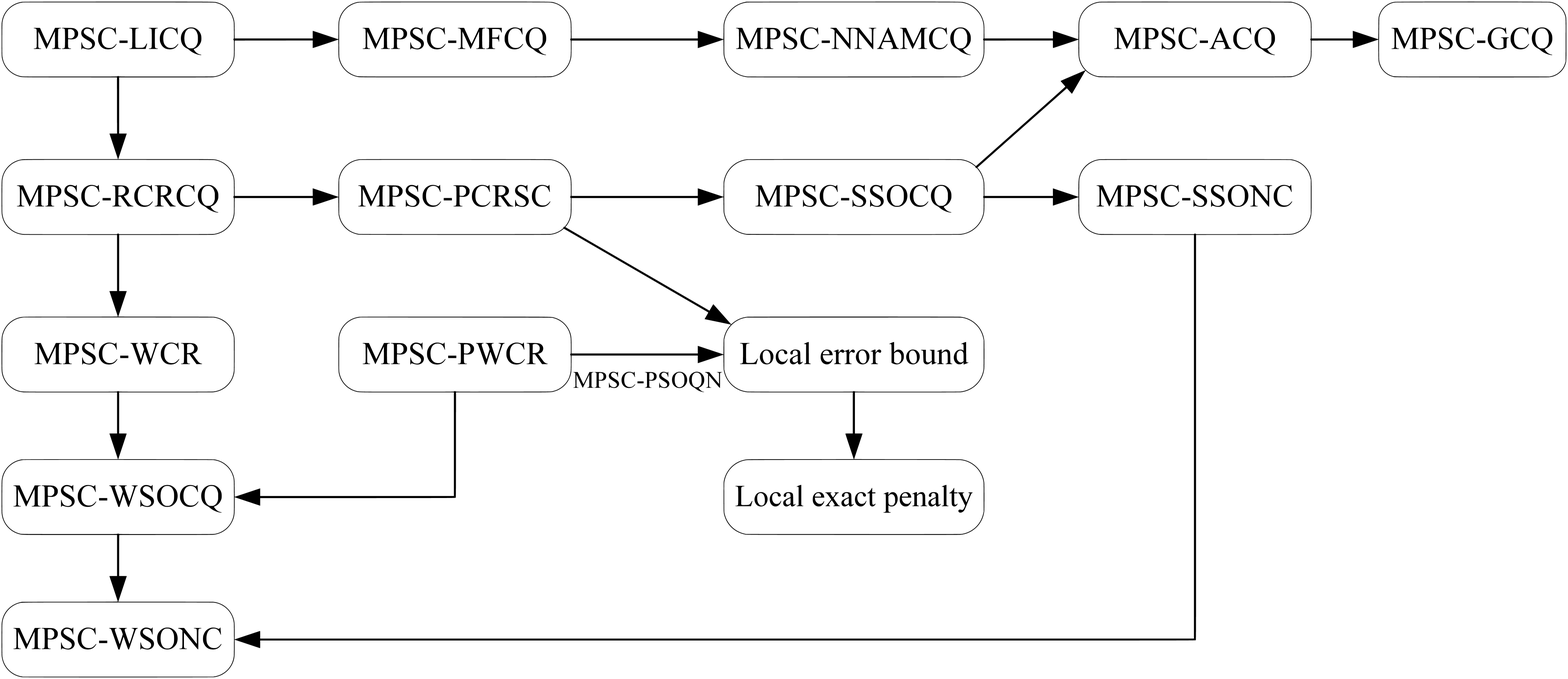}\\
  \caption{Relations among various CQs, W/SSONC, local error bound and local exact penalty for MPSC}\label{fig1}
\end{figure}

\section{Conclusion}\label{sec6}
  Some new MPSC-tailored constraint qualifications, such as MPSC-WSOCQ, MPSC-SSOCQ, MPSC piecewise WCR and MPSC piecewise second-order quasi-normality  for \eqref{mpsc1} are introduced.  Additionally, the relations among these new MPSC-tailored constraint qualifications and some existing constraint qualifications are also discussed. We obtain a sufficient condition for a $M$-stationary point of \eqref{mpsc1} being an $S$-stationary point.
  Moreover,  the weak second-order necessity conditions and strong second-order necessity conditions for \eqref{mpsc1} are established under some  suitable conditions.
 Finally,  the local exact penalization results for \eqref{mpsc1} are derived under the local error bound assumption. The local error bound of \eqref{mpsc1} is proved to be true under the assumptions such as the MPSC-PWCR and MPSC-PSOQN as well as the MPSC-PCRSC.
 As \eqref{mpsc1} is closely related to the MPEC and MPDC, it is interesting to extend MPSC-W/SSOCQ for the MPEC and MPDC. Besides, one may propose an MPDC-tailored version of W/SSOCQ, which would enable us to provide unified results for these optimization problems. It is also interesting to consider the second-order necessity conditions for \eqref{mpsc1} under the $M$-stationary point instead of $S$-stationary point.

\vskip5mm

\noindent{\bf Acknowledgements.}   This paper was supported by  the
   Natural Science Foundation of China (Nos. 12071379, 12271061),
 the Natural Science Foundation of Chongqing(cstc2021jcyj-msxmX0925,  cstc2022ycjh-bgzxm0097), Youth Project of Science and Technology Research Program of Chongqing Education Commission of China (No. KJQN202201802) and the Southwest University Graduate Research Innovation Program (No. SWUS23058). This paper was also supported by the National Key R\&D Program of China (No 2023YFA1011302).

\vskip2mm

\noindent {\bf Author contributions.}
All authors contributed equally to this article.
\vskip2mm

\noindent {\bf Compliance with Ethical Standards}
 It is not applicable.

\vskip2mm
\noindent{\bf Competing interests}\rm

No potential conflict of interest was reported by the authors.


\begin{thebibliography}{plain}

\bibitem{Clason2017}
    Clason, C., Rund, A., Kunisch, K.:
    Nonconvex penalization of switching control of partial differential equations.
    Systems and Control Lett. 106, 1-8 (2017)


\bibitem{Gugat2008}
    Gugat, M.:
    Optimal switching boundary control of a string to rest in finite time.
    ZAMM Z. Angew. Math. Mech. 88, 283-305 (2008)


\bibitem{Hante2013}
    Hante, F.M., Sager, S.:
    Relaxation methods for mixed-integer optimal control of partial differential equations.
    Comput. Optim. Appl. 55, 197-225 (2013)


\bibitem{Seidman2013}
    Seidman, T.I.:
    Optimal control of a diffusion/reaction/switching system.
    Evol. Equ. Control Theory. 2, 723-731 (2013)

\bibitem{Wang2015}
    Wang, L., Yan, Q.:
    Time optimal controls of semilinear heat equation with switching control.
    J. Optim. Theory Appl. 165, 263-278 (2015)

\bibitem{JiangZCL}
Jiang, S., Zhang, J., Chen, C.H., Lin, G.H.:
 Smoothing partial exact penalty splitting method
for mathematical programs with equilibrium constraints. J. Global Optim.  70, 223-236 (2018)

\bibitem{Kanzow2021}
    Kanzow, C., Mehlitz, P., Steck, D.:
    Relaxation schemes for mathematical programmes with switching constraints.
    Optim. Methods Softw. 36, 1223-1258 (2021)


\bibitem{Mehlitz2020}
    Mehlitz, P.:
    Stationarity conditions and constraint qualifications for mathematical programs with switching constraints.
    Math. Program. 181, 149-186 (2020)


\bibitem{Li2023}
    Li, G., Guo, L.:
    Mordukhovich stationarity for mathematical programs with switching constraints under weak constraint qualifications.
    Optim. 72, 1817-1838 (2023)


\bibitem{Liang2021}
    Liang, Y.C., Ye, J.J.:
    Optimality conditions and exact penalty for mathematical programs with switching constraints.
    J. Optim. Theory Appl. 190, 1-31 (2021)


\bibitem{Ribeiro2023}
    Ribeiro, A., Sachine M.:
    On strong second-order necessary optimality conditions under relaxed constant rank constraint qualification.
    Optim.  73,  1993-2005 (2024)



\bibitem{Chendai}
 Chen, J.W., Dai, Y.H.:
     Multiobjective optimization with least constraint violation: Optimality conditions and exact penalization.
    J. Global Optim.  87, 807-830 (2023)

 \bibitem{WeiTammerYao}
Wei, Z., Tammer, C., Yao J.C.:  Characterizations for strong Abadie constraint qualification and applications to calmness.
J. Optim. Theory Appl.
189, 1-18 (2021)


\bibitem{XiaoVanYaoWen}
Xiao, Y.B., Van Tuyen, N., Yao, J.C., Wen, C.F.: Locally Lipschitz vector optimization problems: second-order constraint qualifications, regularity condition and KKT necessary optimality conditions. Positivity 24, 313-337 (2020)


\bibitem{Andreani2007}
    Andreani, R., Mart\'{\i}nez, J.M., Schuverdt, M.L.:
    On second-order optimality conditions for nonlinear programming.
    Optim. 56, 529-542 (2007)


\bibitem{Andreani2010}
    Andreani, R., Echag\"{u}e, C.E., Schuverdt, M.L.:
    Constant-rank condition and second-order constraint qualification.
    J. Optim. Theory Appl. 146, 255-266 (2010)


\bibitem{Guo2013}
    Guo, L., Lin, G.H., Ye, J.J.:
    Second-order optimality conditions for mathematical programs with equilibrium constraints.
    J. Optim. Theory Appl. 158, 33-64 (2013)


\bibitem{Mehlitz2020mpdc}
    Mehlitz, P.:
    On the linear independence constraint qualification in disjunctive programming.
    Optim. 69, 2241-2277 (2020)


\bibitem{Bai2023}
   Bai, K., Song, Y., Zhang, J.:
   Second-order enhanced optimality conditions and constraint qualifications.
   J. Optim. Theory Appl. 198, 1264-1284 (2023)


\bibitem{Rockafellar1998}
    Rockafellar, R.T., Wets, R.J.-B.:
    Variational Analysis.
    Springer, Berlin (1998)



\bibitem{Mordukhovich2006}
    Mordukhovich, B.S.:
    Variational Analysis and Generalized Differentiation I: Basic Theory.
    Springer, Berlin (2006)


\bibitem{GuoZL}
   Guo, L., Zhang, J., Lin, G.H.:
  New results on constraint qualifications for nonlinear
extremum problems and extensions.
   J. Optim. Theory Appl.  163, 737-754 (2014)







\bibitem{Maciel2011}
   Maciel, M.C., Santos, S.A., Sottosanto, G.N.:
   On second-order optimality conditions for vector optimization.
   J. Optim. Theory Appl. 149, 332-351 (2011)



\bibitem{Fiacco1983}
    Fiacco, A.V.:
    Introduction to Sensitivity and Stability Analysis.
    Academic Press, New York (1983)


\bibitem{Fiacco1968}
    Fiacco, A.V., McCormick, G.P.:
    Nonlinear Programming: Sequential Unconstrained Minimization Techniques.
    Wiley, New York (1968)


\bibitem{Andreani2010s}
    Andreani, R., Birgin, E.G., Mart\'{\i}nez, J.M. et al.:
    Second-order negative-curvature methods for box-constrained and general constrained optimization.
    Comput. Optim. Appl. 45, 209-236 (2010)



\bibitem{Kruger2019}
   Kruger, A.Y., L\'{o}pez, M.A., Yang, X., Zhu, J.:
   H\"{o}lder error bounds and H\"{o}lder calmness with applications to convex semi-infinite optimization.
   Set-Valued Var. Anal. 27, 995-1023 (2019)





\end{thebibliography}
\end{document}